\theoremstyle{plain}
\newtheorem{theorem}{Theorem}[section]
\newtheorem{lemma}[theorem]{Lemma}
\newtheorem{corollary}[theorem]{Corollary}
\theoremstyle{definition}
\newtheorem{example}[theorem]{Example}
\newtheorem{assumption}{Assumption}
\theoremstyle{remark}
\newtheorem{remark}{Remark}
\begin{document}


\title{Global stabilization of the planar Ricker system with noisy PBC}

\author{
\name{E. Braverman\textsuperscript{a}\thanks{Corresponding author. Email:
maelena@ucalgary.ca; maelena@math.ucalgary.ca} and  A. Rodkina\textsuperscript{b}}
\affil{\textsuperscript{a}Department of Mathematics and Statistics, University of Calgary,\\
2500 University Drive N.W., Calgary, AB T2N 1N4, Canada; \textsuperscript{b}Department of Mathematics,
The University of the West Indies,\\ Mona Campus, Kingston, Jamaica}
}

\maketitle

%

\begin{abstract}
We apply Prediction-Based control (PBC) in order to  stabilize globally  a positive equilibrium of  a planar Ricker's equation. 
We  construct a closed invariant set in a strictly positive domain for the controlled map
and derive conditions on control parameters
ensuring that the increments of a specially constructed  Lyapunov function are  nonpositive on this set.
By stochastic  perturbation of the parameters we decrease the average values  of controls  providing global, as well as local, stabilization. Computer simulations illustrate our results.
\end{abstract}

\begin{keywords}
Prediction-based control, Ricker planar system, local and global stability for a system of difference equations, stochastic perturbation of a control, stabilization by noise

{\bf AMS subject classifications:}
  39A30; 39A50; 92D25; 39A60
\end{keywords}

\bigskip

\section{Introduction}

Difference systems describe models of population ecology, where distinctive time stages can be identified, for example, semelparous species experiencing reproduction event only once per season, 
so that their dynamics can be adequately described by difference equations. 
Simple one-dimensional  difference equations can exhibit complicated behaviour, such as multiple attractors and transition to chaos. 
However, real-world problems typically include either structured populations  or several interacting species leading to a system with even more sophisticated behaviour. A population can have spatial structure, where the area is divided into patches, and the reproduction rate at a patch is negatively influenced by overpopulation in adjacent patches, and in addition this influence is one-step delayed
\cite{yamamoto}, or be divided into juveniles-adults with different dynamics \cite{tang_li}. 
A system can also describe two species, such as host-parasite interactions (see \cite{parasitoid,tang_li}  and references therein).
Qualitative theory for discrete systems of population dynamics has been an area of active research for a long time \cite{Kang1,LSO,Kang2,Smith}.

It is quite a typical situation when a two-dimensional model describes two competing species, where for each of $x,y$ the amount at the next stage is proportional to the current amount 
 \begin{equation}
 \label{eq:general}
x_{n+1} = x_n {\mathcal R}_1 (x_n,y_n),\quad   y_{n+1} = y_n {\mathcal R}_2 (x_n,y_n),
\end{equation}
where the reproduction rates ${\mathcal R}_i (x,y)$ decrease in both $x$ and $y$, $i=1,2$.  
Its particular cases are  a competitive planar Ricker model  with exponential-type per capita growth rates ${\mathcal R}_1$  
 \begin{equation}
 \label{eq:Rick}
 \left\{
 \begin{array}{l}
x_{n+1}= \displaystyle x_n\exp\{r-x_n-ay_n\},\\
y_{n+1}= \displaystyle y_n\exp\{s-bx_n-y_n\}, \\ x_0, y_0>0,  n\in \mathbb N_0 =  \{ 0,1,2, \dots \} = \mathbb N \cup \{0 \}
\end{array}
\right.
 \end{equation}
considered in  \cite{yedaneh}
and the  Leslie-Gower model with ${\mathcal R}_i=(A_i+B_ix+C_iy)^{-1}$.
To explore global stability, Lyapunov functions is one of the main tools  \cite{BHEBL,girard} that we also later utilize.

Similarly to one-dimensional maps, systems can exhibit unstable and chaotic behaviour. There are many methods developed to stabilize such systems.
The control type which proposes to combine a state variable with the vector map value at one of the next steps is called Prediction-Based Control (PBC), see \cite{FL2010,uy99}.
PBC was generalized to include two or more future states \cite{Ayers}. An important feature of its original and generalized version is that original, not shifted,  fixed points are stabilized.
Another property of this control tool is that it is unconditional, and its application does not depend on the state variable. 
This leads to simplicity in application; however, methods with switching and conditional control can save on control resources. Examples of conditional controls are in-box method~\cite{cushing2024chaos,desharnais2001chaos} used to regulate flour beetle population, adaptive limiter control~\cite{sah2013stabilizing,Segura2019}, 
threshold harvesting~\cite{Liz2022} applied in most resources regulation policies, see also review paper \cite{Zelinka2023} for additional examples.
Including stochasticity in discrete models, for example, random switching  \cite{han,liberzon}  is motivated by problems in engineering, as well as in mathematical economics \cite{deng} and stochastic game theory  \cite{aberkane}. For including stochasticity in PBC method 
see \cite{BKRPhys, BR2023, pbclocsys}, for possible stabilization of cycles and multiple equilibrium points \cite{ BKRcycl, BRmult}.

In the version we apply, PBC substitutes the map with a linear combination of the current and the next state, where the coefficient of the current state (between zero and one) corresponds to the control intensity. The simpest case when the predicted state is defined by the map corresponds to the zero controls. 
PBC with the intensities $\alpha$ and $\beta$ in $x$ and $y$, respectively, applied to \eqref{eq:general}  gives
 \begin{equation}
 \label{eq:general_PBC}
x_{n+1} = \alpha x_n +  (1- \alpha) x_n {\mathcal R}_1 (x_n,y_n),~   y_{n+1} = \beta  y_n + (1-\beta) y_n  {\mathcal R}_2 (x_n,y_n),
~ \alpha,\beta \in [0,1).
\end{equation}
After the  application of  PBC as in \eqref{eq:general_PBC},
equation  \eqref {eq:Rick} takes the form 
 \begin{equation}
 \label{eq:RickPBC}
 \left\{
 \begin{array}{ll}
x_{n+1}=&x_n\left [(1-\alpha)\exp\{r-x_n-ay_n\}+\alpha\right]=f(\alpha, x, y),\\
y_{n+1}=&y_n\left[(1-\beta)\exp\{s-bx_n-y_n\}+\beta\right]=g(\beta, x, y).
\end{array}\right.
 \end{equation}
Note that the models where the next-stage value is a weighted average of the state variable and the value at the previous stage appear as natural modifications for systems of population dynamics, not as a control application.
For example, in \cite{kulenovic},  system \eqref{eq:general_PBC}
was introduced and treated as a modification of the  Leslie-Gower model. 

Denoting 
\begin{equation}
 \label{def:Ups}
 \Upsilon=\Upsilon_{\alpha, \beta}:=\left( \begin{array}{cc} \alpha & 0\\ 0& \beta \end{array} \right),\quad \alpha, \beta\in [0, 1),
\end{equation}
 \begin{equation}
 \label{def:Talphabeta}
 \begin{split}
  U:&=(x, y),\\
 T(U):&=\left(f(1, x, y), \, g(1, x, y)\right)=\left( x\exp\{r-x-ay\}, \, y\exp\{s-bx-y\}\right),\\
  T_{\alpha, \beta}(U):&=\left(f(\alpha, x, y), \, g(\beta, x,y)\right)\\
  &=\left(x\left [(1-\alpha)\exp\{r-x-ay\}+\alpha\right], \, y\left[(1-\beta)\exp\{s-b-y\}+\beta\right]\right),\\
 \end{split}
 \end{equation}
 we can rewrite equation \eqref{eq:RickPBC} as 
 \[
 U_{n+1}=(I-\Upsilon_{\alpha, \beta})T(U_n)+\Upsilon_{\alpha, \beta} U_n, \quad \mbox{or} \quad U_{n+1}=T_{\alpha, \beta}U_{n}.
 \]

Everywhere in the paper we assume that 
\begin{equation}
 \label{cond:mainposeq}
 r>as, \quad s>br, \quad 1>a,b>0.
 \end{equation}
 We are mostly interested in $r, s>2$, as for $0<r,s \leq 2$, global asymptotic  stability of the coexistence equilibrium was justified in \cite[Page 583]{BHEBL}. 
 
 We set 
 \begin{equation}
 \label{def:pqK}
 p:=\frac{r-as}{1-ab}, \quad q:=\frac{s-br}{1-ab}, \quad  K=(p, q)
 \end{equation}
 and note that conditions \eqref{cond:mainposeq} guarantee that the lines $y=s-bx$ and $y=\frac{r-x}a$ intersect at $K$,
 which is a unique positive equilibrium of $T$, as well as of $T_{\alpha, \beta}$. 
There are 3 more equilibriums, $(0, s)$, $(r, 0)$, $(0, 0)$, however we find low bounds  $\tilde \alpha, \tilde \beta\in (0,1)$ of parameters  $\alpha, \beta$ and  construct the invariant for $T_{\alpha,\beta}$ set $D_{\alpha,\beta}$ which does not include these equilibriums, for $\alpha>\tilde\alpha$, $\beta>\tilde \beta$.

 Along with \eqref{eq:RickPBC}  we also consider the case when  both control parameters are variable, i.e.
\begin{equation}
 \label{eq:RickPBCvar}
 \left\{
 \begin{array}{ll}
x_{n+1}=&x_n\left [(1-\alpha_{n+1})\exp\{r-x_n-ay_n\}+\alpha_{n+1}\right],\\
y_{n+1}=&y_n\left[(1-\beta_{n+1})\exp\{s-bx_n-y_n\}+\beta_{n+1}\right],
\end{array}\right.
 \end{equation}
where $\alpha_n, \beta_n\in (0, 1)$ for each $n\in \mathbb N_0$, and $(x_0, y_0)\in \mathbb R^2_+$. For $T_{\alpha, \beta}(x,y)$  defined as in \eqref{def:Talphabeta} and $U_n:=(x_n, y_n)$, equation \eqref{eq:RickPBCvar} can be written as 
\begin{equation}
\label{def:Tn+1}
(x_{n+1}, y_{n+1})=T_{\alpha_{n+1}, \beta_{n+1}}(x_n, y_n), \quad \mbox{or} \quad U_{n+1}=T_{\alpha_{n+1}, \beta_{n+1}}U_n, \quad n\in \mathbb N_0.
\end{equation}
To obtain a  global stability result for  system  \eqref{eq:RickPBCvar},  we apply  the approach from \cite{BHEBL}. We construct  a special Lyapunov function and derive the low bounds $\rho_1, \rho_2$ for controls guaranteeing that its increments are nonpositive on solutions of \eqref{eq:RickPBCvar}, whenever $\alpha_n > \rho_1$, $\beta_n>\rho_2$. We show that for each $\delta>0$, the ball neighbourhood $B(\delta, K)$ of $K$ can be reached in the nonrandom finite number of steps $S(\delta)$, and therefore prove global stability.

 Along with general variable controls, we consider  stochastically perturbed parameters $\alpha_n=\alpha+\ell \xi_{n}$, $\beta_n=\beta+\bar \ell \chi_{n}$, where $\alpha$, $\beta$, $\ell$, $\bar \ell$ are nonnegative numbers,  $(\xi_n)_{n\in \mathbb N}$, $(\chi_n)_{n\in \mathbb N}$ are sequences of independent identically distributed bounded random variables, such that $\xi, \chi\in [1-\varepsilon, 1]$ for each $\varepsilon>0$,  with positive probability. The main focus of this paper is to justify that  the introduction of  noise into the controls may improve stability results for  system \eqref{eq:RickPBC}. In other words,  the equilibrium solution  to system \eqref{eq:RickPBCvar} might be asymptotically stable (locally or globally) for some $ \ell, \bar \ell > 0$ even though it is not true for $ \ell=0$, $\bar \ell = 0$, i.e. for  system~\eqref{eq:RickPBC}.

The proof of global stability in the case of stochastically perturbed controls is quite distinct from the deterministic case. We use obtained earlier bounds $\rho_1$, $\rho_2$, however  the stochastic controls should exceed them only for a limited time $S$, which allows a solution to enter the neighborhood  of $K$ where local stability takes place. Applying a version of the Borell-Cantelli Lemma we  justify that there is  a random uniformly bounded across the sample space moment $\mathcal N$ such that $\alpha+\ell \xi_{\mathcal N+i}>\rho_1$,  $\beta+\bar \ell \chi_{\mathcal N+i}>\rho_2$ for all $i=0, 1, \dots, S$.  Applying the result based on the Lyapunov function, which is mentioned above, we conclude that the solution must enter  the ball $B(\delta, K)$ at  a random moment $\tau\le S$. When the solution is already in $B(\delta, K)$, we apply the local stability result. The main condition for local stability  is 
\begin{equation}
\label{cond:intrKolm}
\mathbb E\ln \|J_{\alpha+\ell\xi, \beta+\bar \ell\chi}\|\le -\nu<0,
\end{equation}
where $\nu>0$, $\|\cdot\|$ is some matrix norm, $J_{\alpha+\ell\xi,\beta+\bar \ell\chi}$  is a Jacobian of $T_{\alpha+\ell\xi, \beta+\bar \ell\chi}$ at the equilibrium~$K$. In the proof of this result we apply  the Kolmogorov's Law of Large Numbers and  approaches from \cite{BRMedv,pbclocsys}.

Let us emphasize  that in our main result, Theorem~\ref{thm:globstabst0}, not only the lower bounds of the noise are  not in the stabilization domain, but even the average control bounds without noise do not guarantee global stabilization of the positive equilibrium. The lower control bounds  with noise, however, should exceed the values sufficient for the entrance in the invariant subdomain separated from the axes in the first quadrant, earlier denoted as $\tilde{\alpha}$ and $\tilde{\beta}$.
For successful entrance into the local convergence neighbourhood of $K$, the upper bounds of the control with noise should exceed global stability bounds $\rho_1$, $\rho_2$ earlier determined and justified using the Lyapunov function.

The structure of the paper is as follows.

 In Section~\ref{sec:invar}  we find the lower bounds on the parameters  $\alpha, \beta$ such that the operator $T_{\alpha, \beta}$ has an invariant closed rectangle $\mathcal D_{\alpha, \beta}\subset (0, e^{r-1}]\times (0, e^{s-1}]$ separated from the axes. We also find the intervals $(\underline \alpha, \bar \alpha)$ and $(\underline \beta, \bar \beta)$ such that when the variable controls remain in these intervals, the solution to \eqref{eq:RickPBCvar} remains in $\mathcal D_{\underline\alpha, \underline\beta}$ if the initial values $(x_0, y_0)\in \mathcal D_{\underline\alpha, \underline\beta}$. If however the initial value $(x_0, y_0)\notin \mathcal D_{\underline\alpha, \underline\beta}$, $x_0, y_0>0$, we show that the solution reaches $\mathcal D_{\underline\alpha, \underline\beta}$ in a finite number of steps, which depends on $(x_0, y_0)$.
 
 In Section~\ref{sec:locstab} we derive conditions on control parameters  which guarantee local stability. In case of a constant control, for local stability it is sufficient that  the eigenvalues of the Jacobian $ J_{\alpha, \beta}$ of $T_{\alpha, \beta}$ at the equilibrium $K$ are inside the unit ball.  When controls are variable,  the Jacobian  $ J_{\alpha_n, \beta_n}$ is not constant. To obtain local stability, we derive conditions on the controls which ensure that a norm of $ J_{\alpha_n, \beta_n}$ can be  estimated by a constant $\lambda<1$.  We consider induced matrix norms of $ J_{\alpha_n, \beta_n}$ in  ${\mathbf \ell}^p$, such as the Euclidean ${\mathbf \ell}^2$ (spectral),  the traffic ${\mathbf \ell}^1$ or the maximum  ${\mathbf \ell}^{\infty}$ norm.  

In Section~\ref{sec:Lyapglob}, applying the approach from \cite{BHEBL}, we construct the Lyapunov function  and derive the low bounds on controls which guarantee that its increments are nonpositive on solutions of \eqref{eq:RickPBC}. This ensures that the solution starting at any $(x_0, y_0)$, $x_0, y_0>0$, reaches the ball $B(\delta, K)$  for any prescribed $\delta>0$ and therefore converges to $K$. We also show that for each $\delta>0$ there is a finite number $S(\delta)$ of steps to reach the ball $B(\delta, K)$. This number depends only on $\delta$ and the bounds for the control parameters, once the initial condition is in the invariant domain. Once the initial point is outside of the domain, after a certain number of iterations it gets into the domain, and this number can also be computed. 

In Section~\ref{sec:stoch},  the control parameters are stochastically perturbed. We present three local stability results: Lemma~\ref{lem:locstabst0}  where  the noise intensity is small enough for the stochastic control to act path-wise as a deterministic variable control, and essentially stochastic Lemma ~\ref{lem:locKolmtauk} and Theorem~\ref{thm:locKolm}.  Condition ~\eqref{cond:intrKolm} is the main one in Lemma~\ref{lem:locKolmtauk} and Theorem~\ref{thm:locKolm}, and it generalizes corresponding deterministic  condition in  Lemma~\ref{lem:locstabst0}.

Methods employed in the proof of  Lemma~\ref{lem:locKolmtauk} and Theorem~\ref{thm:locKolm}  are based on approaches from \cite{BRMedv,pbclocsys}  (see also \cite{Medvedev}). The results of Section~\ref{sec:Lyapglob}, Lemma ~\ref{lem:locKolmtauk} and Theorem~\ref{thm:locKolm} are applied to prove Theorem~\ref{thm:globstabst0},
where the estimates for the control parameters obtained in Section~\ref{sec:Lyapglob} to ensure global stability, are improved by stochastic perturbations. 

Section~\ref{sec:ex} illustrates theoretical results and confirms that theoretically established bounds lead to global stabilization. Moreover, sometimes weaker controls allow to stabilize, and simulations allow to find approximate bounds.

Section~\ref{sec:conclusions}, in addition to the summary, indicates some future research directions.

Most of the proofs are postponed to the Appendix.

{\bf Notations.} We denote by  $\mathbb N$ the set of positive integers, $\mathbb N_0 = \mathbb N \cup \{ 0 \} = \{0,1,2, \dots \}$, we will use various norms $\| \cdot \|$ in the space of two-dimensional vectors ${\mathbb R}^2$ such as  the traffic norm $\| (x,y)  \|_1 =|x|+|y|$, the Euclidean norm  $\| (x,y)  \|_2 =
\sqrt{x^2+y^2}$, and the maximum norm  $\| (x,y)  \|_{\infty} = \max\{|x|,|y|\}$. By the same symbol we denote the induced matrix norms.

\bigskip

\section{An Invariant Set}
\label{sec:invar}

In this section, for some $\tilde \alpha\in (0,1)$, $\tilde \beta\in (0,1)$ and all $(\alpha, \beta)\in (\tilde \alpha,1)\times (\tilde \beta,1)$ we construct an invariant set $D_{\alpha, \beta}$ for  $T_{\alpha, \beta}$, defined  by \eqref{def:Talphabeta}. Note that, as the maximum of the function $xe^{-x}$  is $e^{-1}$,  we have $T_{\alpha, \beta}(x,y) \in [0,e^{r-1}]\times [0, e^{r-1}]$ for any $(\alpha, \beta)\in [0,1)\times [0,1)$, $x \geq 0$, $y \geq 0$.
Our purpose is to find an invariant closed set $D_{\alpha, \beta}\subset (0, e^{r-1})\times (0, e^{r-1})$  which is separated from the axes in the first quadrant. 

\subsection{Upper bound}
\label{subsec:upbound}

We start with establishing the upper bound  for $g(\beta, x, y)=y[(1-\beta)e^{s-y-bx}+\beta]$. Since $g(\beta, x, y)\le g(\beta, 0, y)$ for all $x>0$, we concentrate on finding the upper bound for 
\begin{equation}
\label{def:gbetas}
\begin{split}
g(\beta, 0, y):=y\left[(1-\beta)e^{s-y} +\beta \right], \quad y\in [0, e^{s-1}], 
\end{split}
\end{equation}
for each  $\beta\in (0,1)$. We  find  the interval $[0, \mathcal H(\beta)]$ with the smallest $ \mathcal H(\beta)\in (0, e^{s-1})$ such that $g(\beta, 0, \cdot): [0, \mathcal H(\beta)]\to [0, \mathcal H(\beta)]$. 

We set for the parameters involved in the second equation of \eqref{eq:Rick}
\begin{equation}
\label{def:beta12}
\begin{split}
&\beta_1:=\frac{e^{s-2}}{e^{s-2}+1}, \quad \beta_2:=\frac{e^{s-1}-s}{e^{s-1}-2}, \quad H_2(\beta):=(1-\beta)e^{s-1} +2\beta,\\
&\mathcal H_2(\beta):=\max\{(1-\beta)e^{s-1} +2\beta,\, s\},
\end{split}
\end{equation}
and note that $\beta_1\in (0,1)$,  for each $s>0$, $\beta_2\in (0,1)$,  for $s>2$, $ H_2(\beta)<e^{s-1}$ for each $\beta\in (0,1)$, and
\begin{equation*}
\mathcal H_2(\beta)=s \quad\mbox{if} \quad \beta>\beta_2, \quad \mathcal H_2(\beta)=(1-\beta)e^{s-1} +2\beta,\quad\mbox{if} \quad \beta \leq \beta_2.
\end{equation*}
Similarly, for the parameters involved in the first equation of \eqref{eq:Rick}, we set 
\begin{equation}
\label{def:alpha12}
\begin{split}
&f(\alpha, x, 0):=y\left[(1-\alpha)e^{r-x} +\alpha \right], ~ x \in [0, e^{r-1}], \\ &  \alpha_1:=\frac{e^{r-2}}{e^{r-2}+1}, ~\alpha_2:=\frac{e^{r-1}-r}{e^{r-1}-2}, \\ &  H_1(\alpha):=(1-\alpha)e^{r-1} +2\alpha, \quad
\mathcal H_1(\alpha):=\max\{(1-\alpha)e^{r-1} +2\alpha,\, r\}.
\end{split}
\end{equation}
\begin{lemma}
\label{lem:locmax}
Let $g(\beta, 0, y)$ be defined as in \eqref{def:gbetas}, and $\beta_1$, $\beta_2$, $H_2(\beta)$,  $\mathcal H_2(\beta)$ be defined as in \eqref{def:beta12}. Then
\begin{enumerate}
\item [(a)] If $\beta < \beta_1$, the function $g(\beta, 0, \cdot)$ has a unique local maximum at the point $\bar y=\bar y(\beta)\in (1,2)$ such that $g(\beta, 0, \bar y(\beta))<H_2(\beta)$, while
$g(\beta, 0, \cdot)$ is monotone increasing for $\beta \geq \beta_1$;
\item  [(b)]  $H_2(\beta)<s$ if $\beta>\beta_2$;
\item  [(c)]  $g(\beta, x, y)\le \mathcal H_2(\beta) $ for each $\beta\in (0,1)$, $x\in [0, e^{r-1}]$, $y\in [0, \mathcal H_2(\beta)]$;
\item  [(d)] $g(\beta, x, y)\le H$ for each $H>\mathcal H_2(\beta)$, $\beta\in (0,1)$, $x\in [0, e^{r-1}]$, $y\in [0, H]$.
\end{enumerate}
Similarly, for $f(\alpha, x, 0)$ and $\alpha_1$, $\alpha_2$, $H_1(\alpha)$,  $\mathcal H_1(\alpha)$ defined  in \eqref{def:alpha12} we have
\begin{enumerate}
\item [(e)] If $\alpha < \alpha_1$, the function $f(\alpha, \cdot, 0)$ has a unique local maximum at the point $\bar x=\bar x(\alpha)\in (1,2)$ such that $f(\alpha, \bar x(\alpha), 0)<H_1(\alpha)$, while
$f(\alpha, \cdot, 0)$ is monotone increasing for $\alpha \geq \alpha_1$;
\item [(f)] $H_1(\alpha)<s$ if $\alpha>\alpha_2$;
\item [(g)] $f(\alpha, x, y)\le \mathcal H_1(\alpha) $ for each $\alpha \in (0,1)$, $y\in [0, e^{s-1}]$, $x\in [0, \mathcal H_1(\alpha)]$;
\item [(h)] $f(\alpha, x, y)\le H$ for each $H>\mathcal H_1(\alpha)$, $\alpha \in (0,1)$, $y\in [0, e^{s-1}]$, $x\in [0, H]$.
\end{enumerate}

\end{lemma}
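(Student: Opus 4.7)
My plan is to reduce the entire lemma to a one-dimensional analysis of
\[
\phi(y) := g(\beta,0,y) = y\bigl[(1-\beta)e^{s-y}+\beta\bigr],
\]
since $g$ is decreasing in $x$ and hence $g(\beta,x,y)\le g(\beta,0,y)$. Parts (e)–(h) will then follow from (a)–(d) by the obvious relabelling $s\to r$, $\beta\to\alpha$, $\bar y\to\bar x$, because $f(\alpha,x,0)$ has exactly the same structure as $\phi$.

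For (a) I would differentiate to get $\phi'(y)=\beta+(1-\beta)e^{s-y}(1-y)$, so the critical-point equation reads $h(y):=(y-1)e^{s-y}=\beta/(1-\beta)$. The auxiliary function $h$ vanishes at $y=1$, attains its global maximum $e^{s-2}$ at $y=2$, and decays beyond; hence the equation has a root $\bar y\in(1,2)$ (necessarily a local maximum of $\phi$, since $\phi'(1)=\beta>0$) precisely when $\beta/(1-\beta)\le e^{s-2}$, i.e.\ $\beta<\beta_1$, while for $\beta\ge\beta_1$ the derivative stays positive on $[0,\infty)$ and $\phi$ is monotone increasing. The strict bound $\phi(\bar y)<H_2(\beta)$ then follows from the unique maximum of $ye^{-y}$ at $y=1$: for $\bar y\in(1,2)$,
\[
(1-\beta)\bar y\,e^{s-\bar y}<(1-\beta)e^{s-1},\qquad \beta\bar y<2\beta,
\]
and adding these two inequalities yields the claim. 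Part (b) is a direct rearrangement: $H_2(\beta)<s$ iff $\beta(e^{s-1}-2)>e^{s-1}-s$, which under $s>2$ (so $e^{s-1}>2$) reduces to $\beta>\beta_2$.

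The heart of (c) and (d) is the identity
\[
\phi(y)\le y \iff (1-\beta)e^{s-y}+\beta\le 1 \iff y\ge s,
\]
with equality $\phi(s)=s$. For (c), since $\mathcal H_2(\beta)=\max\{H_2(\beta),s\}\ge s$, this already gives $\phi(\mathcal H_2(\beta))\le\mathcal H_2(\beta)$. On $[0,\mathcal H_2(\beta)]$ the maximum of $\phi$ is realised either at the right endpoint or at an interior local maximum; by (a) the only interior local maximum lies at $\bar y\in(1,2)\subset[0,\mathcal H_2(\beta)]$ with value $<H_2(\beta)\le\mathcal H_2(\beta)$, so both candidates are controlled. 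For (d), given $H>\mathcal H_2(\beta)\ge s$, I would split $[0,H]=[0,\mathcal H_2(\beta)]\cup[\mathcal H_2(\beta),H]$: the first piece is handled by (c), and on the second every $y\ge s$ satisfies $\phi(y)\le y\le H$ by the iff above.

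The main subtlety I anticipate is in (c): one must rule out that, when $\beta<\beta_1$, the post-minimum rise of $\phi$ overshoots $\mathcal H_2(\beta)$ somewhere strictly inside $(y_\ast,\mathcal H_2(\beta))$, where $y_\ast$ is the local minimum of $\phi$ beyond $\bar y$. Monotonicity of $\phi$ on $[y_\ast,\infty)$ reduces this to checking $\phi(\mathcal H_2(\beta))\le\mathcal H_2(\beta)$, which is exactly the displayed identity. None of the individual cases is deep, but the case bookkeeping depending on the relative order of $\beta$, $\beta_1$, $\beta_2$ (which varies with $s$) is where the argument is most prone to slips.
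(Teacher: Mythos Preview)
Your proposal is correct and follows essentially the same route as the paper: reduce to the one-variable function $\phi(y)=g(\beta,0,y)$, locate its critical points via $\phi'$, use $ye^{-y}\le e^{-1}$ together with $\bar y<2$ to get $\phi(\bar y)<H_2(\beta)$, and exploit the equivalence $\phi(y)\le y\iff y\ge s$ for parts~(c),(d). The paper organises the critical-point analysis through the second derivative (the inflection at $y=2$ makes $y=2$ the minimiser of $\phi'$, so $\phi'(2)\ge 0$ characterises monotonicity), whereas you rewrite $\phi'=0$ as $h(y)=(y-1)e^{s-y}=\beta/(1-\beta)$ and read off the same dichotomy from $\max h=h(2)=e^{s-2}$; these are equivalent. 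Two minor slips to clean up: the condition for a root in $(1,2)$ should read $\beta/(1-\beta)<e^{s-2}$ (strict), matching your ``$\beta<\beta_1$''; and at $\beta=\beta_1$ the derivative vanishes at $y=2$, so ``stays positive'' should be ``stays nonnegative'' (the monotonicity conclusion is unaffected).
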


The proof is deferred to Appendix, Section~\ref{subsec:Ap1}.

\subsection{Lower bound and the invariant rectangle}

Now recall that we assume  $s/b>r>2, \quad r/a>s>2$.
We set, for any $\alpha, \beta\in (0,1)$,
\begin{equation}
\label{def:c12} 
\begin{split}
&c_1(\beta):=r-a\mathcal H_2(\beta), \quad c_2(\alpha):=s-b\mathcal H_1(\alpha),\\
&\tilde \alpha:=\max\left\{\frac{e^{r-1} -s/b}{e^{r-1}-2}, \, 0\right\}\quad \tilde \beta:=\max\left\{\frac{e^{s-1} -r/a}{e^{s-1}-2}, \, 0\right\},\\
&\alpha_2:=\frac{e^{r-1} -r}{e^{r-1}-2}, \quad \beta_2:=\frac{e^{s-1} -s}{e^{r-1}-2}.
\end{split}
\end{equation}
Since $e^{r-1}>r$  and $s/b>r>2$,  we get 
\[
e^{r-1}-2>r-2>0, \quad e^{r-1} -s/b<e^{r-1} -r<e^{r-1} -2.
\]
So both fractions on the second line of  \eqref{def:c12} are nonnegative and are also less than 1.  Since $s/b>r$, $r/a>s$  and by  \eqref{def:beta12},  \eqref{def:alpha12}, \eqref{def:c12}, we get 
\[
\frac{e^{r-1} -s/b}{e^{r-1}-2}<\frac{e^{r-1} -r}{e^{r-1}-2}=\alpha_2, \quad \frac{e^{s-1} -r/a}{e^{s-1}-2}<\frac{e^{s-1} -s}{e^{r-1}-2}=\beta_2,\quad \alpha_2, \beta_2>0,
\]
which implies $\tilde \alpha\in [0, \alpha_2)$, $\tilde \beta \in [0, \beta_2).$
Since $\mathcal H_1(\alpha)=(1-\alpha)e^{r-1}+2\alpha$ when $\alpha<\alpha_2$  and $\mathcal H_1(\alpha)=r$ for $\alpha\in [\alpha_2, 1)$ we get by \eqref{def:c12} 
\begin{equation*}
c_2(\alpha) = \left\{ \begin{array}{ll} \displaystyle  s-b[(1-\alpha)e^{r-1}+2\alpha ]>0, & \quad\mbox{if} \quad \alpha\in (\tilde \alpha, \alpha_2),\\
s-br>0, & \quad\mbox{if}  \quad \alpha\in [\alpha_2, 1). \end{array} \right.
\end{equation*}
Similarly, since $\mathcal H_2(\beta)=(1-\beta)e^{s-1}+2\beta$ when $\beta<\beta_2$  and $\mathcal H_2(\beta)=s$ for $\beta\in (\tilde \beta, \beta_2)$ we get 
\begin{equation*}
\begin{split}
c_1(\beta) = \left\{ \begin{array}{ll} \displaystyle  r-a[(1-\beta)e^{s-1}+2\beta ]>0, &\quad\mbox{if} \quad \beta\in (\tilde \beta, \beta_2),  \\
r-as>0, & \quad\mbox{if}  \quad \beta\in [\beta_2, 1).    \end{array} \right.
\end{split}
\end{equation*}
Therefore we arrive at 
\[
c_2(\alpha)>0, \quad\mbox{if}  \quad \alpha\in (\tilde \alpha, 1), \quad c_1(\beta)>0, \quad\mbox{if}  \quad \beta\in (\tilde \beta, 1).
\]
Note also that  
\[
c_2(\alpha)<s\le \mathcal H_2(\beta), \quad c_1(\beta)<r\le \mathcal H_1(\alpha).
\]
For all $\alpha>\tilde \alpha$, $\beta > \tilde \beta$, we define lower bounds of the invariant rectangular domain
\begin{equation}
\label{def:mathcalH12} 
\begin{split}
&u_{1}(\alpha, \beta):=\min\{ue^{-u}:  u\in [c_1(\beta), \mathcal H_1(\alpha)]\},\\
&u_{2}(\alpha, \beta):=\min\{ue^{-u}:  u\in [c_2(\alpha),  \mathcal H_2(\beta)]\}, \\
&\underline c_1(\alpha, \beta):=\min\{x[(1-\alpha)e^{r-x-ay}+\alpha]\,: x\in [c_1(\beta), \mathcal H_1(\alpha)], \, y\in [0, \mathcal H_2(\beta)]\},\\
&\underline c_2(\alpha, \beta):=\min\{y[(1-\beta)e^{s-y-bx}+\beta]\,: x\in [0, \mathcal H_1(\alpha)], \, y\in [c_2(\alpha), \mathcal H_2(\beta)]\}.
\end{split}
\end{equation}

The values $\underline{c}_i(\alpha, \beta)$, $i=1,2$, are the left sides of the intervals constituting the invariant domain $D_{\alpha,\beta}$, while $u_i(\alpha, \beta)$, $i=1,2$, are used in the lower bounds of $\underline{c}_i(\alpha, \beta)$. 
The properties of these expressions, stated in Lemma~\ref{lem:alphainvar0}  allow to show that a solution to \eqref{eq:RickPBCvar} with variable controls $\alpha_n\ge \underline \alpha, \beta_n\ge \underline \beta$ remains in  $D_{\underline  \alpha,\underline  \beta}$  for all $n\in \mathbb N$, once it starts in this domain.

\begin{lemma}
\label{lem:alphainvar0}
Let \eqref{cond:mainposeq} hold, $\mathcal H_1(\cdot)$, $\mathcal H_2(\cdot) $ be defined as in \eqref{def:alpha12}, \eqref{def:beta12}, respectively;
$\tilde \alpha$, $\tilde \beta$,  $c_1(\beta)$, $c_2(\alpha)$ be defined as in \eqref{def:c12}, $\underline c_1(\alpha, \beta)$, $\underline c_2(\alpha, \beta)$ be defined as in \eqref{def:mathcalH12}.
\begin{enumerate}
\item  [(a)] Each function $\mathcal H_1(\cdot)$ and $\mathcal H_2(\cdot) $ does not increase  in $(\tilde\alpha, 1)$ and   $(\tilde \beta, 1)$, respectively.
\item  [(b)]  Each function $c_1(\beta)$ and $c_2(\alpha) $ does not decrease  in $(\tilde \beta, 1)$ and   $(\tilde \alpha, 1)$, respectively.
\item [(c)] If  $\tilde \alpha_1 \geq  \tilde  \alpha_2>\tilde \alpha$, $\tilde  \beta_1 \geq   \tilde \beta_2>\tilde \beta$, we have
\begin{equation}
\begin{split}
\label{ineq:ab12}
&u_1(\tilde  \alpha_1, \tilde \beta_1) \geq u_1(\tilde \alpha_2, \tilde  \beta_2), \quad u_2(\tilde \alpha_1,  \tilde \beta_1) \geq u_2(\tilde \alpha_2, \tilde \beta_2),\\
&\underline c_1(\tilde  \alpha_1,  \tilde  \beta_1) \geq \underline c_1(\tilde  \alpha_2, \tilde  \beta_2), \quad \underline c_2(\tilde \alpha_1, \tilde  \beta_1) \geq
\underline c_2(\tilde  \alpha_2, \tilde  \beta_2).
\end{split}
\end{equation}

\item[(d)]
For $\alpha>\tilde \alpha$, $\beta>\tilde \beta$\\
$\underline c_1(\alpha, \beta)\ge (1-\alpha)e^{r-a \mathcal H_2(\beta)}u_{1}(\alpha, \beta)+\alpha c_1(\beta)>0$, \\$\underline c_2(\alpha, \beta)\ge (1-\beta) e^{s-b \mathcal H_1(\alpha)}u_{2}(\alpha, \beta)+\beta c_2(\alpha)>0$.

\item[(e)] $\underline c_1(\alpha, \beta)\le c_1(\beta)$, \,\, $\underline c_2(\alpha, \beta)\le c_2(\alpha).$
\end{enumerate}
\end{lemma}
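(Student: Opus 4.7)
The plan is to treat the five items in order, exploiting the fact that most of the assertions reduce to monotonicity arguments once we identify where the minima defining $u_i$ and $\underline c_i$ are attained in the $y$-direction.

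For (a), I would rewrite $(1-\alpha)e^{r-1}+2\alpha = e^{r-1}-\alpha(e^{r-1}-2)$, and observe that $r>2$ forces $e^{r-1}>e>2$, so this is strictly decreasing in $\alpha$. The maximum with $r$ preserves the non-increasing property, and an identical argument handles $\mathcal H_2$. Part (b) is then immediate from (a): $c_1(\beta)=r-a\mathcal H_2(\beta)$ is non-decreasing in $\beta$ because $a>0$ and $\mathcal H_2$ is non-increasing, and symmetrically for $c_2(\alpha)$.

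For (c), the $u_i$ inequalities follow at once: under the hypotheses, (a)-(b) imply $[c_1(\tilde\beta_1),\mathcal H_1(\tilde\alpha_1)]\subseteq[c_1(\tilde\beta_2),\mathcal H_1(\tilde\alpha_2)]$, and the minimum of the fixed function $ue^{-u}$ over a smaller set is no smaller. For the $\underline c_i$ claims the integrand depends on $\alpha$, so I first collapse the $y$-variable: since the integrand $x[(1-\alpha)e^{r-x-ay}+\alpha]$ is strictly decreasing in $y$, the minimum over $y\in[0,\mathcal H_2(\beta)]$ is attained at $y=\mathcal H_2(\beta)$, giving
\[
\underline c_1(\alpha,\beta)=\min_{x\in[c_1(\beta),\mathcal H_1(\alpha)]}x\bigl[(1-\alpha)e^{c_1(\beta)-x}+\alpha\bigr],
\]
using $r-a\mathcal H_2(\beta)=c_1(\beta)$. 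On this reduced problem the integrand is monotone in each parameter in the admissible $x$-range: it is non-decreasing in $\beta$ through $c_1(\beta)$, and its $\alpha$-derivative $x\bigl(1-e^{c_1(\beta)-x}\bigr)$ is nonnegative precisely because $x\ge c_1(\beta)$. Combining pointwise monotonicity of the integrand with the shrinking of the domain $[c_1(\beta),\mathcal H_1(\alpha)]$ yields the monotonicity of $\underline c_1$; the $\underline c_2$ case is symmetric.

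For (e) I would simply evaluate the integrand at the corner $(x,y)=(c_1(\beta),\mathcal H_2(\beta))$: the exponent vanishes and the value collapses to $c_1(\beta)\bigl[(1-\alpha)+\alpha\bigr]=c_1(\beta)$, so the minimum is no larger. Part (d) I would obtain by splitting $x[(1-\alpha)e^{r-x-ay}+\alpha]=(1-\alpha)xe^{r-x-ay}+\alpha x$ and applying $\min(g_1+g_2)\ge\min g_1+\min g_2$. The first piece factors as $(1-\alpha)e^{r-ay}(xe^{-x})$, whose minimum over the domain is $(1-\alpha)e^{r-a\mathcal H_2(\beta)}u_1(\alpha,\beta)$, while the second piece is minimized over $[c_1(\beta),\mathcal H_1(\alpha)]$ at $\alpha c_1(\beta)>0$; summing gives the advertised bound, with strict positivity from $u_1>0$ and $c_1(\beta)>0$ for $\beta>\tilde\beta$.

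The main obstacle I anticipate is step (c) for $\underline c_1$: naively the integrand is not monotone in $\alpha$ because the sign of $1-e^{r-x-ay}$ flips across the nullcline. The key observation that resolves this is that after reducing to $y=\mathcal H_2(\beta)$ the exponent becomes $c_1(\beta)-x\le 0$ throughout the admissible $x$-range $x\ge c_1(\beta)$, which is exactly the region where the $\alpha$-derivative has the right sign. Everything else is a bookkeeping check of monotonicities; the symmetric statements for the second coordinate are obtained by exchanging the roles of $(x,\alpha,r,a)$ and $(y,\beta,s,b)$.
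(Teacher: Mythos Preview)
Your proposal is correct and follows essentially the same route as the paper: parts (a), (b), (d), (e) are handled identically (the paper's \eqref{est:xbel} is exactly your splitting argument for (d), and the paper also evaluates at the corner $(c_1(\beta),\mathcal H_2(\beta))$ for (e)).

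For part (c) your argument is actually more complete than the paper's. The paper simply records the interval inclusions $[c_1(\tilde\beta_1),\mathcal H_1(\tilde\alpha_1)]\subseteq[c_1(\tilde\beta_2),\mathcal H_1(\tilde\alpha_2)]$ and asserts that \eqref{ineq:ab12} follows, without addressing the fact that the integrand defining $\underline c_1$ itself depends on $\alpha$. Your reduction to $y=\mathcal H_2(\beta)$ and the observation that on the admissible range $x\ge c_1(\beta)$ the exponent $c_1(\beta)-x$ is nonpositive---so the $\alpha$-derivative $x(1-e^{c_1(\beta)-x})$ has the right sign---is precisely the missing step. This buys you a genuinely complete argument where the paper is terse; the trade-off is that the paper's one-line version suffices once the reader sees the same reduction, so the difference is one of explicitness rather than of method.
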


The proof is deferred to Appendix, Section~\ref{subsec:Ap2}.

Set
\begin{equation}
\label{def:Dalphabeta}
\begin{split}
&D_{\alpha, \beta}:=[\underline c_{1}(\alpha, \beta), \mathcal H_{1}(\alpha)]\times [\underline c_{2}(\alpha, \beta), \mathcal H_{2}(\beta)],\\
&D_{\alpha, \beta}(h):=[\underline c_{1}(\alpha, \beta)-h, \mathcal H_{1}(\alpha)]\times [\underline c_{2}(\alpha, \beta)-h, \mathcal H_{2}(\beta)],\\
&\mbox{for}\quad \alpha> \tilde \alpha, \quad \beta>\tilde \beta, \quad  h\in \left(0, \min\{\underline c_{1, \, }(\alpha, \beta), \, \underline c_{2}(\alpha, \beta)\}\right).
\end{split}
\end{equation}

\begin{lemma}
\label{lem:alphainvar}
Let \eqref{cond:mainposeq} hold, $\tilde \alpha$, $\tilde \beta$  be defined as in \eqref{def:c12} and $\alpha>\tilde \alpha$, $\beta>\tilde \beta$. Let $T_{\alpha, \beta}$, $D_{\alpha, \beta}$ and $D_{\alpha, \beta}(h)$ be defined as in \eqref{def:Talphabeta} and \eqref{def:Dalphabeta}, respectively. Then the following statements are valid.
\begin{enumerate}
\item[(a)] $
T_{\alpha, \beta}(\cdot, \cdot): D_{\alpha, \beta}\to D_{\alpha, \beta}$, \,\, $T_{\alpha, \beta}(\cdot, \cdot): D_{\alpha, \beta}(h)\to D_{\alpha, \beta}(h)$.
\item[(b)] 
If $\tilde \alpha_1 \geq \tilde \alpha_2 > \tilde \alpha$, $\tilde \beta_1  \geq \tilde \beta_2>\tilde \beta$ then $T_{\tilde \alpha_1, \tilde \beta_1}(\cdot, \cdot): D_{\tilde \alpha_2, \tilde \beta_2}\to D_{\tilde \alpha_2, \tilde \beta_2},$ \\$T_{\tilde \alpha_1, \tilde \beta_1}(\cdot, \cdot): D_{\tilde \alpha_2, \tilde \beta_2}(h)\to D_{\tilde \alpha_2, \tilde \beta_2}(h)$.
\end{enumerate}
\end{lemma}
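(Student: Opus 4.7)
The plan is to verify, for any $(x,y)$ in the source rectangle, each of the four scalar inequalities defining the target rectangle: upper bounds $f\le\mathcal H_1$, $g\le\mathcal H_2$, and lower bounds $f\ge\underline c_1$ (or $\underline c_1-h$), $g\ge\underline c_2$ (or $\underline c_2-h$). Since part (a) is the diagonal case $\tilde\alpha_1=\tilde\alpha_2$, $\tilde\beta_1=\tilde\beta_2$ of part (b), I first handle (a) to isolate the main ideas and then indicate the modification needed for (b).

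For the upper bounds, note that $\mathcal H_1(\alpha)\le e^{r-1}$ and $\mathcal H_2(\beta)\le e^{s-1}$ by \eqref{def:alpha12}, \eqref{def:beta12} and $r,s>2$. In part (a), Lemma~\ref{lem:locmax}(c),(g) applied directly yield $f(\alpha,x,y)\le\mathcal H_1(\alpha)$ and $g(\beta,x,y)\le\mathcal H_2(\beta)$. In part (b), Lemma~\ref{lem:alphainvar0}(a) gives $\mathcal H_1(\tilde\alpha_1)\le\mathcal H_1(\tilde\alpha_2)$ (and similarly for $\mathcal H_2$), so Lemma~\ref{lem:locmax}(h) applied with $H=\mathcal H_1(\tilde\alpha_2)$ yields $f(\tilde\alpha_1,x,y)\le\mathcal H_1(\tilde\alpha_2)$; symmetrically for $g$.

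For the lower bound of $f$ in part (a), I split on the position of $x$ relative to $c_1(\beta)=r-a\mathcal H_2(\beta)$. If $x\ge c_1(\beta)$, then $(x,y)\in[c_1(\beta),\mathcal H_1(\alpha)]\times[0,\mathcal H_2(\beta)]$ and the definition \eqref{def:mathcalH12} of $\underline c_1(\alpha,\beta)$ gives $f(\alpha,x,y)\ge\underline c_1(\alpha,\beta)$. If $x<c_1(\beta)$, then $y\le\mathcal H_2(\beta)$ yields $r-x-ay>a(\mathcal H_2(\beta)-y)\ge 0$, so $(1-\alpha)e^{r-x-ay}+\alpha>1$ and $f(\alpha,x,y)>x\ge\underline c_1(\alpha,\beta)-h$. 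The symmetric argument for $g$ (using $c_2(\alpha)$ and $\mathcal H_1(\alpha)$) closes part (a); the $D_{\alpha,\beta}$ statement is the case $h=0$.

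For part (b), the key additional input is the identity
\[
f(\alpha,x,y)=x+x(1-\alpha)(e^{r-x-ay}-1),
\]
which shows that $\alpha\mapsto f(\alpha,x,y)$ is nonincreasing when $r-x-ay\ge 0$ and nondecreasing when $r-x-ay\le 0$. For $(x,y)\in D_{\tilde\alpha_2,\tilde\beta_2}$: if $r-x-ay\ge 0$ then $f(\tilde\alpha_1,x,y)\ge x\ge\underline c_1(\tilde\alpha_2,\tilde\beta_2)$; if $r-x-ay<0$ then $x>r-a\mathcal H_2(\tilde\beta_2)=c_1(\tilde\beta_2)$, so the definition of $\underline c_1(\tilde\alpha_2,\tilde\beta_2)$ yields $f(\tilde\alpha_2,x,y)\ge\underline c_1(\tilde\alpha_2,\tilde\beta_2)$, and the favourable direction of monotonicity together with $\tilde\alpha_1\ge\tilde\alpha_2$ gives $f(\tilde\alpha_1,x,y)\ge f(\tilde\alpha_2,x,y)$. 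The $D(h)$ version only weakens the lower bound by $h$, and the analogous argument for $g$ (with the sign of $s-bx-y$) completes the proof. The main obstacle is precisely this non-uniform monotonicity of $f$ in $\alpha$: the sign of $r-x-ay$ governs which of the two elementary estimates---the naive $f\ge x$, or comparison with the defining minimum of $\underline c_1$---is operative.
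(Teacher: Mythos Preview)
Your proof is correct and follows essentially the same line as the paper's. Part~(a) is identical; for part~(b) the paper first reduces to the annulus $D_{\tilde\alpha_2,\tilde\beta_2}\setminus D_{\tilde\alpha_1,\tilde\beta_1}$ via part~(a) and then, for the lower bound, bounds $r-x-ay$ below by $c_1(\tilde\beta_2)-x$ before splitting on the sign of the latter, whereas you split directly on the sign of $r-x-ay$ over all of $D_{\tilde\alpha_2,\tilde\beta_2}$. Both versions rest on the same monotonicity observation $f(\alpha,x,y)=x+x(1-\alpha)(e^{r-x-ay}-1)$, and your decomposition is marginally cleaner since it avoids the preliminary worst-case-in-$y$ reduction.
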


The proof is deferred to Appendix, Section~\ref{subsec:Ap3}.

Now,  applying Lemma~\ref{lem:alphainvar} we proceed to the case of variable controls.

\begin{lemma}
\label{lem:Tninvar}
Let \eqref{cond:mainposeq} hold, $\tilde \alpha$, $\tilde \beta$  be defined as in \eqref{def:c12}, $\underline c_i(\alpha, \beta)$, $i=1,2$, be defined as in \eqref{def:mathcalH12}, 
$\underline \alpha\in (\tilde \alpha, 1)$, $\underline \beta\in (\tilde \beta, 1)$, $\bar \alpha\in (\underline \alpha,  1)$, $\bar \beta\in (\underline \beta, 1)$,  
$h\in \left(0, \min\{\underline c_{1}(\underline\alpha, \underline\beta), \, \underline c_{2}(\underline\alpha, \underline\beta)\}\right)$ be some numbers, and $D_{\underline \alpha, \underline\beta}$ and $D_{\underline \alpha, \underline\beta}(h)$ be defined as in  \eqref{def:Dalphabeta}.
Let variable controls $\alpha_n$ and $\beta_n$ for each $n\in \mathbb N$ satisfy
\[
\alpha_n\in (\underline \alpha, \bar \alpha), \quad \beta_n\in (\underline \beta, \bar \beta),
\]
and a sequence $(x_n, y_n)_{n\in \mathbb N}$ be a solution to \eqref{eq:RickPBCvar} with the initial value $(x_0, y_0)$.
\begin{enumerate}
\item  [(a)] If $(x_0, y_0)\in D_{\underline \alpha, \underline\beta}$, then $(x_n, y_n)\in D_{\underline \alpha, \underline\beta}$ for all $n\in \mathbb N$.
\item  [(b)] If $(x_0, y_0)\in D_{\underline \alpha, \underline\beta}(h)$, then $(x_n, y_n)\in D_{\underline \alpha, \underline\beta}(h)$ for all $n\in \mathbb N$.
\item [(c)]  Let  $(x_0, y_0)\notin D_{\underline \alpha, \underline\beta}(h)$, then there exists a number $\tilde S=\tilde S(x_0, y_0, \underline \alpha, \underline\beta, \bar \alpha, \bar\beta, h) \in {\mathbb N}$ such that $(x_n, y_n)\in D_{\underline \alpha, \underline\beta}(h)$  for any $n\ge \tilde S$.
\end{enumerate}
\end{lemma}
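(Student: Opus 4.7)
Parts~(a) and~(b) follow by a straightforward induction on $n$. Assuming $(x_n, y_n) \in D_{\underline\alpha, \underline\beta}$, since $\alpha_{n+1} > \underline\alpha$ and $\beta_{n+1} > \underline\beta$, Lemma~\ref{lem:alphainvar}(b) applied with $\tilde\alpha_1 = \alpha_{n+1}$, $\tilde\alpha_2 = \underline\alpha$, $\tilde\beta_1 = \beta_{n+1}$, $\tilde\beta_2 = \underline\beta$ gives $(x_{n+1}, y_{n+1}) = T_{\alpha_{n+1}, \beta_{n+1}}(x_n, y_n) \in D_{\underline\alpha, \underline\beta}$. The $D_{\underline\alpha,\underline\beta}(h)$ version of Lemma~\ref{lem:alphainvar}(b) yields~(b) by the same induction.

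For part~(c) the plan is to show that the orbit enters $D_{\underline\alpha, \underline\beta}(h)$ after finitely many steps, after which part~(b) keeps it there. The entry argument is split into upper and lower bounds. \emph{Upper bounds}: using $ue^{-u}\le e^{-1}$ together with the monotonicity $f(\alpha, x, y) \le f(\alpha, x, 0)$ and $g(\beta, x, y) \le g(\beta, 0, y)$, one obtains the a priori estimates
\[
x_{n+1} \le (1-\alpha_{n+1}) e^{r-1} + \alpha_{n+1} x_n, \qquad y_{n+1} \le (1-\beta_{n+1}) e^{s-1} + \beta_{n+1} y_n,
\]
which combined with $\alpha_n < \bar\alpha < 1$ and $\beta_n < \bar\beta < 1$ give $\limsup_n x_n \le e^{r-1}$ and $\limsup_n y_n \le e^{s-1}$. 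To sharpen to $x_n \le \mathcal{H}_1(\underline\alpha)$ and $y_n \le \mathcal{H}_2(\underline\beta)$ in finite time, I would iterate Lemma~\ref{lem:locmax}(d) and~(h) with $H$ taken just above $\mathcal{H}_2(\underline\beta)$ and $\mathcal{H}_1(\underline\alpha)$ respectively, using $\mathcal{H}_1(\alpha_{n+1}) \le \mathcal{H}_1(\underline\alpha)$ and $\mathcal{H}_2(\beta_{n+1}) \le \mathcal{H}_2(\underline\beta)$ from the non-increase in Lemma~\ref{lem:alphainvar0}(a).

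\emph{Lower bounds}: once the upper bounds hold for $n \ge N_1$, consider $x_n \in (0, c_1(\underline\beta) - h]$ with $y_n \le \mathcal{H}_2(\underline\beta)$. Then $r - x_n - a y_n \ge h > 0$, so
\[
\frac{x_{n+1}}{x_n} = (1 - \alpha_{n+1}) e^{r - x_n - a y_n} + \alpha_{n+1} \ge (1 - \bar\alpha) e^{h} + \bar\alpha = 1 + (1 - \bar\alpha)(e^{h} - 1) > 1,
\]
a uniform geometric growth rate forcing $x_n$ to exit $(0, c_1(\underline\beta) - h]$ in finitely many steps. By Lemma~\ref{lem:alphainvar0}(e) we have $\underline c_1(\underline\alpha, \underline\beta) \le c_1(\underline\beta)$, so $x_n > c_1(\underline\beta) - h \ge \underline c_1(\underline\alpha, \underline\beta) - h$; the symmetric estimate yields $y_n > \underline c_2(\underline\alpha, \underline\beta) - h$. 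Taking $\tilde S$ as the first step at which all four inequalities hold simultaneously and invoking~(b) gives $(x_n, y_n) \in D_{\underline\alpha, \underline\beta}(h)$ for all $n \ge \tilde S$.

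The principal obstacle is the borderline case $\mathcal{H}_2(\underline\beta) = s$ (equivalently $\underline\beta \ge \beta_2$), in which the $y$-contraction rate degenerates to $1$ as $y_n \downarrow s$, so that iterating the upper bound alone does not yield $y_n \le s$ in finite time. Under \eqref{cond:mainposeq} the axis equilibrium $(0, s)$ is repelling in the $x$-direction since $r > as$; hence the lower-bound growth estimate for $x_n$ must be invoked inside the upper-bound step for $y_n$ in order to restore a uniform sub-unit contraction factor. The two stages therefore need to be interleaved in this regime, which is the most delicate part of the argument.
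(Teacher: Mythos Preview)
Your proofs of parts~(a) and~(b) coincide with the paper's.

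For part~(c), your lower-bound argument (uniform geometric growth of $x_n$ once $y_n \le \mathcal H_2(\underline\beta)$ and $x_n$ is below the threshold) is essentially the paper's, though the paper phrases it additively and first establishes an explicit floor $\underline x_0$ on the position at which $x$ can land after dropping through $\mathcal H_1(\underline\alpha)$, so that the number of growth steps is computable.

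The genuine gap is your upper-bound step. Lemma~\ref{lem:locmax}(d),(h) assert only \emph{invariance} of $[0,H]$ for $H>\mathcal H_i$, not contraction toward $[0,\mathcal H_i]$; iterating them therefore cannot force finite-time entry into $[0,\mathcal H_1(\underline\alpha)]$. Your preliminary $\limsup$ bound reaches only $e^{r-1}$ and is likewise not a finite-time statement. The paper bypasses this by a direct additive decrement: while $x_n>\mathcal H_1(\underline\alpha)\ge r$ one has $e^{r-x_n-ay_n}<1$, hence
\[
x_n-x_{n+1}=(1-\alpha_{n+1})\,x_n\bigl(1-e^{r-x_n-ay_n}\bigr)>(1-\bar\alpha)\,\mathcal H_1(\underline\alpha)\bigl(1-e^{r-\mathcal H_1(\underline\alpha)}\bigr)=:\Delta_1,
\]
so $x$ drops below $\mathcal H_1(\underline\alpha)$ in at most $\lfloor (x_0-\mathcal H_1(\underline\alpha))/\Delta_1\rfloor+1$ steps. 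This argument uses no information about $y_n$, so the interleaving you anticipate in the last paragraph is unnecessary for the non-degenerate case $\mathcal H_1(\underline\alpha)>r$. Your borderline concern (when $\mathcal H_1(\underline\alpha)=r$ or $\mathcal H_2(\underline\beta)=s$, so $\Delta_i=0$) is legitimate and the paper does not treat it separately either; but outside that borderline the paper's device is simpler than what you propose and gives an explicit $\tilde S$.
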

The proof is deferred to Appendix, Section \ref{subsec:Ap4}.

\bigskip

\section{Local Stability for PBC}
\label{sec:locstab}

Following the main stabilization goal of the paper, we first explore local stability of controlled Ricker map and find sufficient conditions on the control intensities to guarantee local stabilization. We start with constant controls as in  \eqref{eq:RickPBC}  in Section~\ref{subsec:constcongrol}. As the system is autonomous,
local asymptotic (and even exponential) stability of its positive equilibrium $K$ follows from the fact that the spectral radius of the Jacobian at $K$ is less than one 
\cite[P. 222, Corollary 4.34]{Elaydi}, see  \cite[Chapter 4.6]{Elaydi} for an overview of the linearisation method.
Next, in Section~\ref{subsec:PBCvar} we consider variable controls, which is a reasonable assumption when some perturbations are included 
and is a required framework, once stochastic component is incorporated in the controls.
For non-autonomous systems, the fact that the spectral radius of each Jacobian matrix is less than one is insufficient for stabilization. 
Here, the estimate that all the norms (in most cases, we apply the induced max-norm) are less than some $\lambda \in (0,1)$ is required.

\subsection{ PBC with constant controls}
\label{subsec:constcongrol}

 As usual, condition \eqref{cond:mainposeq} is supposed to hold. Consider \eqref{eq:RickPBC} and derive  sufficient conditions for local asymptotic stability of its positive equilibrium $K=(p,q)$, where $p, q$ are  defined in \eqref{def:pqK}. The corresponding Jacobian is
\begin{equation}
\label{Jacobian_control}
J_{\alpha,\beta}  =  \left(\begin{array}{ll}
(1-\alpha)(1-p) + \alpha  &  -(1-\alpha) ap \\ -(1-\beta) bq& (1-\beta)(1-q) + \beta  \end{array}\right).
\end{equation}
Denoting
\begin{equation}
\label{def:paqb}
p_\alpha:=(1-\alpha)p, \quad q_\beta=(1-\beta)q, 
\end{equation}
we can rewrite $J_{\alpha,\beta}$ in the following form
\[
J_{\alpha,\beta}  =  \left(\begin{array}{ll}
1-p_\alpha  &  -ap_\alpha \\ -bq_\beta& 1-q_\beta  \end{array}\right)
\]
and get the trace and the determinant of  $J_{\alpha,\beta}$
$$
{\rm tr}~J_{\alpha, \beta}=2-p_\alpha-q_\beta, \quad  {\rm det}~J_{\alpha, \beta}=1-p_\alpha-q_\beta+p_\alpha q_\beta(1-ab).
$$
 
Local stability of the equilibrium $K$ follows from the fact that both eigenvalues 
$\lambda_{1,2}=\frac{1}{2} ({\rm tr}~J_{\alpha, \beta}\pm\sqrt{{\rm tr}^2~J_{\alpha, \beta}-4{\rm det}~J_{\alpha, \beta} } )$ of $J_{\alpha,\beta}$  are inside the unit circle, which is equivalent to 
\begin{equation}
\label{rel:eigenvl}
\sqrt{{\rm tr}^2~J_{\alpha,\beta}-4 {\rm det}~J_{\alpha,\beta}}<\min \{2-{\rm tr}~J_{\alpha,\beta}, \, 2+{\rm tr}~J_{\alpha,\beta}\},
\end{equation}
and which can be possible only  if $|{\rm tr}~J_{\alpha,\beta}|<2$. Note that ${\rm tr}~J_{\alpha,\beta}^2-4~ {\rm det}_{\alpha,\beta}=(p_\alpha-q_\beta)^2+4p_\alpha q_\beta ab\ge 0$,  so both eigenvalues are real. If ${\rm tr}~J_{\alpha,\beta}\ge 0$ then \eqref{rel:eigenvl} holds for all $\alpha, \beta$. If ${\rm tr}~J_{\alpha,\beta}<0$ then 
\eqref{rel:eigenvl} holds for  $2{\rm tr}~J_{\alpha,\beta}+p_\alpha q_\beta (1-ab)>0$. Thus  we arrive at   sufficient conditions  for local stability of $K$:
\begin{equation}
\label{cond:IFFPBC}
\begin{split}
p_\alpha+q_\beta<4, \quad 2-p_\alpha-q_\beta+\frac{p_\alpha q_\beta (1-ab)}{2}>0.
\end{split}
\end{equation}
We set
\begin{equation}
\label{cond:abcontr2}
\begin{split}
&\alpha_*:=\max\{1-2/p, 0\}, \\
& \beta_*:= \max\left\{1-\frac2{q(1-ab)}-\frac {4ab}{q(1-ab)[(1-\alpha)(1-ab)p-2]}, \, \, 0\right\}, ~\mbox{if} ~ \alpha>\alpha_*.
\end{split}
  \end{equation}
Since
  \begin{equation*}
 \begin{split}
 & \frac2{q(1-ab)}\left[1+\frac {2ab} {[(1-\alpha)(1-ab)p-2]}\right]
 =\frac{2(1-ab)}{q(1-ab)}\times \frac {(1-\alpha)p-2} {[(1-\alpha)p-2]}=\frac 2q>0,
 \end{split}
 \end{equation*} 
 we get $\beta_*\in [0, 1)$.  Also, $\alpha_*=0$ if $p<2$, so we do not have any restrictions on $\alpha$, and, similarly,  $\beta_*=0$  if  $q<2$, and there is no restrictions on $\beta$. 

\begin{lemma}
\label{lem:locstPBCpar}
Let condition \eqref{cond:mainposeq} hold and let $\beta_*$, $\alpha_*$ be defined by \eqref{cond:abcontr2}.  Then
\begin{enumerate}
\item [(a)] condition \eqref{cond:IFFPBC}  is  sufficient for  local asymptotic stability  of the positive equilibrium $K$ of system \eqref{eq:RickPBC};
\item [(b)] if $\alpha\in (\alpha_*,1)$, $\beta \in (\beta_*,1)$  then condition \eqref{cond:IFFPBC} holds.

\end{enumerate} 
\end{lemma}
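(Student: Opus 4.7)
The plan is to address parts (a) and (b) separately. Part (a) follows from the classical linearization principle for autonomous discrete systems, while part (b) amounts to an algebraic verification that the thresholds $\alpha_*$ and $\beta_*$ in \eqref{cond:abcontr2} correspond exactly to the two inequalities in \eqref{cond:IFFPBC}.

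For part (a), I would begin by recording that the discussion preceding the statement already computes ${\rm tr}^2 J_{\alpha,\beta}-4\det J_{\alpha,\beta}=(p_\alpha-q_\beta)^2+4p_\alpha q_\beta ab\ge 0$, so the eigenvalues of $J_{\alpha,\beta}$ are real. The first inequality of \eqref{cond:IFFPBC} gives $p_\alpha+q_\beta<4$, hence $|{\rm tr}\,J_{\alpha,\beta}|=|2-p_\alpha-q_\beta|<2$, which is the necessary prerequisite for \eqref{rel:eigenvl}. Splitting \eqref{rel:eigenvl} into two cases: when ${\rm tr}\,J_{\alpha,\beta}\ge 0$, squaring $\sqrt{{\rm tr}^2-4\det}<2-{\rm tr}$ reduces to $\det J_{\alpha,\beta}>{\rm tr}\,J_{\alpha,\beta}-1$, which simplifies to $p_\alpha q_\beta(1-ab)>0$ and is automatic; when ${\rm tr}\,J_{\alpha,\beta}<0$, squaring $\sqrt{{\rm tr}^2-4\det}<2+{\rm tr}$ reduces to $\det J_{\alpha,\beta}+{\rm tr}\,J_{\alpha,\beta}+1>0$, and a direct computation shows this equals $2\bigl[2-p_\alpha-q_\beta+\frac12 p_\alpha q_\beta(1-ab)\bigr]$, precisely the second inequality in \eqref{cond:IFFPBC}. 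Both eigenvalues therefore lie strictly inside the unit disk, and \cite[Cor.~4.34]{Elaydi} delivers local asymptotic stability of $K$.

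For part (b), the first step is to observe that $\alpha>\alpha_*=\max\{1-2/p,0\}$ immediately gives $p_\alpha=(1-\alpha)p<2$, and hence $2-p_\alpha(1-ab)>0$ since $1-ab<1$. This makes the second inequality in \eqref{cond:IFFPBC} equivalent, after rearrangement, to
\[
q_\beta<\frac{2(2-p_\alpha)}{2-p_\alpha(1-ab)}, \qquad \mbox{i.e.,}\qquad \beta>1-\frac{2(2-p_\alpha)}{q\bigl[2-p_\alpha(1-ab)\bigr]}.
\]
The main obstacle is now the algebraic identity showing that the right-hand side coincides with $\beta_*$ from \eqref{cond:abcontr2}. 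Combining the two subtracted terms in $\beta_*$ over the common denominator $q(1-ab)\bigl[p_\alpha(1-ab)-2\bigr]$ and factoring $(1-ab)$ out of the numerator gives
\[
\frac{2}{q(1-ab)}+\frac{4ab}{q(1-ab)[p_\alpha(1-ab)-2]}=\frac{2(1-ab)(p_\alpha-2)}{q(1-ab)[p_\alpha(1-ab)-2]}=\frac{2(2-p_\alpha)}{q[2-p_\alpha(1-ab)]},
\]
so the two expressions for the threshold agree. Finally, since $p_\alpha(1-ab)\le p_\alpha$, one has $\frac{2(2-p_\alpha)}{2-p_\alpha(1-ab)}\le 2$, so $q_\beta<2$; combined with $p_\alpha<2$, this yields the first inequality $p_\alpha+q_\beta<4$ in \eqref{cond:IFFPBC} for free, completing the argument.
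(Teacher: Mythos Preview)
Your proof is correct and follows essentially the same route as the paper. For part (a) you reproduce the argument the paper gives before the lemma (the paper's Appendix simply says this part ``was fully justified above''). For part (b) your direct manipulation---solving the second inequality of \eqref{cond:IFFPBC} for $q_\beta$ and matching the resulting threshold to $\beta_*$---is algebraically identical to what the paper does, only packaged differently: the paper introduces $G(x,y)=2-x-y+\tfrac{xy(1-ab)}{2}$ and $g(x)=\frac{2-x}{1-\frac{1-ab}{2}x}$, characterises the domain $\mathcal E=\{G>0,\ x+y<4\}$ as $\{x\in(0,2),\ y\in(0,g(x))\}$, and then substitutes $x=p_\alpha$, $y=q_\beta$. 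Your bound $\frac{2(2-p_\alpha)}{2-p_\alpha(1-ab)}$ is exactly $g(p_\alpha)$, so the two arguments coincide; the paper's formulation has the minor advantage that $G$ and $g$ are reused in Corollary~\ref{cor:locstab}(c) and in Section~\ref{sec:ab}.
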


The proof is deferred to Appendix, Section~\ref{subsec:locstPBCpar}.

\begin{corollary}
\label{cor:locstab}
Let condition \eqref{cond:mainposeq} hold. 
\begin{enumerate}
\item [(a)] If  $(1-\alpha)p+(1-\beta)q<2$, both conditions in \eqref{cond:IFFPBC} hold, so it is sufficient for the  local stability.
\item [(b)] There always exist parameters  $\alpha^*, \beta^*$ such that any $\alpha \in (\alpha^*,1)$, $\beta \in (\beta^*,1)$ 
stabilize locally~\eqref{eq:RickPBC}.
\item [(c)] If for some $\alpha^*, \beta^*$ equilibrium $K$ to \eqref{eq:RickPBC} is locally stable then for any $\alpha \in (\alpha^*,1)$, $\beta \in (\beta^*,1)$, the equilibrium $K$ to \eqref{eq:RickPBC} is also  locally stable.
\end{enumerate}
\end{corollary}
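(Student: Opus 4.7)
For part (a), I would substitute the assumption into the sufficient condition \eqref{cond:IFFPBC} with $p_\alpha = (1-\alpha)p$ and $q_\beta = (1-\beta)q$: the hypothesis $p_\alpha+q_\beta<2$ immediately gives $p_\alpha+q_\beta<4$, while $2-p_\alpha-q_\beta>0$ together with the nonnegative term $p_\alpha q_\beta(1-ab)/2$ (using $1-ab>0$ from \eqref{cond:mainposeq}) make the second inequality of \eqref{cond:IFFPBC} positive. Lemma~\ref{lem:locstPBCpar}(a) then yields local asymptotic stability.

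For part (b), the quickest route is to invoke Lemma~\ref{lem:locstPBCpar}(b) with $\alpha^* := \alpha_*$, $\beta^* := \beta_*$ as defined in \eqref{cond:abcontr2}, both of which lie in $[0,1)$. An equally valid self-contained argument is to choose $\alpha^*, \beta^*$ close enough to $1$ that $(1-\alpha^*)p + (1-\beta^*)q < 2$ and apply part (a) to any $\alpha > \alpha^*$, $\beta > \beta^*$.

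Part (c) is the substantive claim: the stability region is monotone, in the sense that it is downward-left closed in the variables $(u, v) := (p_\alpha, q_\beta)$. Let $c := (1-ab)/2$, $F(u, v) := 2 - u - v + cuv$, and set $\delta_u := u^* - u \geq 0$, $\delta_v := v^* - v \geq 0$ where $u^* := p_{\alpha^*}$, $v^* := q_{\beta^*}$. A direct expansion gives the identity
\begin{equation*}
F(u, v) - F(u^*, v^*) = \delta_u(1 - cv^*) + \delta_v(1 - cu^*) + c \delta_u \delta_v,
\end{equation*}
so the plan is to show $1 - cu^* > 0$ and $1 - cv^* > 0$, after which every term on the right is nonnegative.

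The main obstacle is establishing these two inequalities from the hypothesis that \eqref{cond:IFFPBC} holds at $(\alpha^*, \beta^*)$. I would argue by contradiction for $cu^*$: assume $cu^* \geq 1$. If $cu^* = 1$ then $F(u^*, v^*) = 2 - 1/c = 2 - 2/(1-ab) < 0$ since $ab > 0$, contradicting the second part of \eqref{cond:IFFPBC}. If $cu^* > 1$ then necessarily $u^* > 2$, and $F(u^*, v^*) > 0$ forces $v^* > (u^* - 2)/(cu^* - 1)$; combined with $u^* + v^* < 4$ this yields $cu^{*2} - 4cu^* + 2 < 0$, a quadratic in $u^*$ whose discriminant $16c^2 - 8c = 8c(2c - 1)$ is negative (again because $c < 1/2$ by $ab > 0$), so the quadratic is positive everywhere—contradiction. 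The symmetric argument handles $v^*$. With $1 - cu^*, 1 - cv^* > 0$ in hand, the identity shows $F(u, v) \geq F(u^*, v^*) > 0$, which combined with the trivial inequality $u + v < u^* + v^* < 4$ establishes \eqref{cond:IFFPBC} at $(\alpha, \beta)$, and Lemma~\ref{lem:locstPBCpar}(a) completes the proof.
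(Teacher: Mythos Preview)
Your proof is correct, and for parts (a) and (b) it coincides with the paper's (the paper simply calls these ``straightforward corollaries'' of \eqref{cond:IFFPBC}--\eqref{cond:abcontr2}).

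For part (c) the core idea is the same as the paper's: both arguments rest on the monotonicity of the function $G(x,y)=F(u,v)=2-u-v+\tfrac{1-ab}{2}uv$ in each variable on the rectangle $\bigl(0,\tfrac{2}{1-ab}\bigr)^2$. The paper simply states this monotonicity domain and telescopes $G(p_\alpha,q_\beta)-G(p_{\alpha^*},q_{\beta^*})$ through the intermediate point $(p_{\alpha^*},q_\beta)$, implicitly relying on the Appendix characterisation of the set $\mathcal E$ (where \eqref{cond:IFFPBC} holds) as lying inside $(0,2)^2$ to place $(p_{\alpha^*},q_{\beta^*})$ in the right domain. Your version makes the same monotonicity explicit through the identity $F(u,v)-F(u^*,v^*)=\delta_u(1-cv^*)+\delta_v(1-cu^*)+c\,\delta_u\delta_v$ and then gives a self-contained verification that $u^*,v^*<\tfrac{2}{1-ab}$ via the quadratic discriminant argument. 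So the two proofs are really the same argument; yours is a bit longer but fills in the domain check that the paper leaves to the Appendix.
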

\begin{proof}
Parts {\bf (a)-(b)} are straightforward corollaries of relations  \eqref{cond:IFFPBC}-\eqref{cond:abcontr2}.

To prove {\bf (c)} we note that $p_\alpha<p_{\alpha^*}$, $q_\beta<q_{\beta^*}$, so $p_\alpha+q_{\beta}<4$. Function $G(x, y)$, defined as in \eqref{def:G}, decreases 
with respect to each variable, $x\in \left(0, \frac 2{1-ab}\right)$ and $y\in \left(0, \frac 2{1-ab}\right)$, since $G'_x(x, y)<0$, $G'_y(x, y)<0$.
Then
\begin{align*}
G(p_\alpha, q_\beta) & =G(p_\alpha, q_\beta)-G(p_{\alpha^*}, q_\beta)+G(p_{\alpha^*}, q_\beta)-G(p_{\alpha^*}, q_{\beta^*})+G(p_{\alpha^*}, q_{\beta^*})
\\
& >
G(p_{\alpha^*}, q_{\beta^*})>0.
\end{align*}
Here $G(p_{\alpha^*}, q_{\beta^*})>0$, because   \eqref{cond:IFFPBC} is a sufficient local stability condition with controls  $\alpha^*, \beta^*$. 
\end{proof}



\subsection{PBC with variable controls}
\label{subsec:PBCvar}
 
The Jacobian $J_{\alpha_n, \beta_n}$ of  Ricker system \eqref{eq:RickPBCvar} with step-dependent control intensities 
 is not a constant matrix,  thus to prove  local stability we cannot just rely on the fact that its eigenvalues at each step are inside the unit circle,
the maximum of all norms should not exceed a number $\lambda\in (0,1)$. 
The next lemma gives sufficient local stability conditions for system \eqref{eq:RickPBCvar} in terms of the norm for the matrices  $J_{\alpha_n, \beta_n}$.

\begin{lemma}
\label{lem:locstabst1_var}
Let inequalities \eqref{cond:mainposeq} hold.  Then the equilibrium $K$ of \eqref{eq:RickPBCvar}  is locally  asymptotically stable, if anyone of the two  conditions holds.
\begin{enumerate}
\item [(a)] There exist $\lambda \in (0,1)$, a norm in ${\mathbb R}^2$,  and four constants $0 \leq \alpha_* <\alpha^*<1$, $0\leq \beta_* < \beta^*<1$ 
such that for every $n \in {\mathbb N}$, $\alpha_n$, $\beta_n$ and the Jacobian $J_{\alpha_n,\beta_n}$ at $K$ satisfy
 \begin{equation*} 
\alpha_n \in ( \alpha_* ,\alpha^* ), \quad \beta_n \in (\beta_*, \beta^*), \quad  \|   J_{\alpha_n,\beta_n} \| \leq \lambda.
\end{equation*}
\item [(b)] The constants $0 \leq \alpha_* <\alpha^*<1$, $0\leq \beta_* < \beta^*<1$  satisfy
\begin{equation}
 \label{eq:choice_q_1}
\alpha_*   \in \left(  \max\left\{ 1- \frac{2}{p(1+a)}, 0 \right\},  1  \right), \quad \beta_* \in  \left(  \max\left\{ 1- \frac{2}{q(1+b)}, 0 \right\}, 1  \right),
\end{equation}
and $ \alpha_n \in ( \alpha_* ,\alpha^* )$,  $\beta_n \in (\beta_*, \beta^*)$ for any $n\in \mathbb N_0$.
\end{enumerate}
\end{lemma}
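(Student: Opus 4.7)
The plan is to reduce both parts to the classical principle that if a non-autonomous map is a small $C^2$-perturbation of a linear non-autonomous system whose coefficient matrices have norm uniformly below one, then its fixed point is locally exponentially stable.

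For \textbf{(a)} I would work in deviation coordinates $u_n := U_n - K$. Taylor expanding $T_{\alpha,\beta}$ at the equilibrium,
\[
u_{n+1} = J_{\alpha_{n+1},\beta_{n+1}}\, u_n + R_{n+1}(u_n), \qquad R_n(u) := T_{\alpha_n,\beta_n}(K+u) - K - J_{\alpha_n,\beta_n} u.
\]
Since $T_{\alpha,\beta}(x,y)$ is $C^2$ jointly in $(\alpha,\beta,x,y)$ and the parameters stay in the compact set $[\alpha_*,\alpha^*]\times[\beta_*,\beta^*]$, the second-order remainder admits a uniform estimate $\|R_n(u)\|\le M\|u\|^2$ for $\|u\|\le \delta_0$. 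Combined with the hypothesis $\|J_{\alpha_n,\beta_n}\|\le\lambda$, one obtains
\[
\|u_{n+1}\| \le \lambda\|u_n\| + M\|u_n\|^2 = (\lambda + M\|u_n\|)\|u_n\|.
\]
Choosing $\delta\le\min\{\delta_0,(1-\lambda)/(2M)\}$ and $\mu:=(1+\lambda)/2<1$, an induction from $\|u_0\|\le\delta$ yields $\|u_n\|\le\mu^n\|u_0\|\to 0$, which is local exponential (hence asymptotic) stability.

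For \textbf{(b)} I would work with the induced $\ell^\infty$-norm (maximum absolute row sum), which is the natural choice because row $i$ of $J_{\alpha,\beta}$ depends only on one parameter:
\[
\|J_{\alpha,\beta}\|_{\infty} = \max\bigl\{\,|1-p_\alpha| + a p_\alpha,\ \ b q_\beta + |1-q_\beta|\,\bigr\}.
\]
I would analyse the first row sum $h_1(\alpha) := |1-p_\alpha| + a p_\alpha$ by splitting on the sign of $1-p_\alpha$. When $p_\alpha\le 1$, $h_1(\alpha)=1-(1-a)p_\alpha<1$ (using $a<1$ and $p_\alpha>0$); when $p_\alpha>1$, $h_1(\alpha)=(1+a)p_\alpha-1$, which is $<1$ exactly when $p_\alpha<2/(1+a)$, i.e.\ $(1-\alpha)p(1+a)<2$, i.e.\ $\alpha>1-\tfrac{2}{p(1+a)}$. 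So the threshold in \eqref{eq:choice_q_1} is precisely what forces $h_1(\alpha)<1$ on $(\alpha_*,1)$; the same argument with $(\alpha,p,a)$ replaced by $(\beta,q,b)$ handles row 2. Since $\alpha_*$ is \emph{strictly} above $1-\tfrac{2}{p(1+a)}$ and $h_1$ is continuous, its supremum on the compact interval $[\alpha_*,\alpha^*]$ is some $\lambda_1<1$; analogously $\lambda_2<1$. Setting $\lambda:=\max\{\lambda_1,\lambda_2\}<1$ gives $\|J_{\alpha_n,\beta_n}\|_\infty\le\lambda$ for every $n$, and (b) follows from (a).

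The only real subtlety is the case split on the sign of $1-p_\alpha$: the two linear pieces glue continuously at $p_\alpha=1$ (both equal $a$), and one must check that the unfavourable piece $(1+a)p_\alpha-1$ is the binding one, which is exactly what dictates the form of the threshold $1-\tfrac{2}{p(1+a)}$ rather than the weaker $1-\tfrac{2}{p}$ that appears in the constant-control Lemma~\ref{lem:locstPBCpar}. Everything else - smoothness of $T_{\alpha,\beta}$, compactness of the parameter rectangle, and the standard contraction induction - is routine.
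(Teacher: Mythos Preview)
Your proof is correct and follows essentially the same route as the paper: part (a) is the standard linearization/contraction argument (the paper simply cites \cite[Chapter~4.6]{Elaydi}, while you spell it out), and part (b) uses the induced $\ell^\infty$-norm with the identical case split on the sign of $1-p_\alpha$, arriving at the same threshold $1-\tfrac{2}{p(1+a)}$. The only cosmetic difference is that you obtain the uniform bound $\lambda<1$ via continuity of $h_1,h_2$ on the compact parameter rectangle, whereas the paper tracks an explicit $\varepsilon>0$ in each case; these are equivalent.
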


The proof is postponed  to Appendix, Section~\ref{subsec:var}.

 \section{Lyapunov Function and Global Stability for the Planar Ricker Model with PBC}
 \label{sec:Lyapglob}

So far we proved local stability, evaluating the eigenvalues of the Jacobian of the controlled system. However,  we have not justified global asymptotic stability
for positive initial conditions. 
We start with constructing a Lyapunov function in Section~\ref{subsec:Lyapunov}, its design follows the scheme in \cite{BHEBL} for the planar Ricker model. 
Later, existence of an invariant subdomain and decrease of the Lyapunov function at each step will allow to conclude convergence of all solutions starting in this subdomain 
to the equilibrium $K$. However, this is not yet sufficient for the proof of  global asymptotic stability: in addition, we have to show that the invariant subdomain attracts all positive solutions.  This is the purpose of Section~\ref{subsec:glst} below.

 \subsection{ Lyapunov function}
\label{subsec:Lyapunov}

First, we design the Lyapunov function for the values of controls exceeding minimum levels and prove that it decreases on positive 
solutions distinct from $K$.

\medskip

 Consider \eqref{eq:RickPBC} and denote 
  \begin{equation}
 \label{def:XYTknal}
 \begin{split}
 &X:=r-x-ay, \, \,\,Y:=s-bx-y, ~ T_{\alpha, \beta}(x, y):=\left( x\kappa_\alpha, \, y\nu_\beta\right), \,\,   \alpha, \beta\in [0, 1),\\
 & \kappa_\alpha= \kappa_\alpha(X):=(1-\alpha)\exp\{X\}+\alpha, \quad \nu_\beta= \nu_\beta(Y):=(1-\beta)\exp\{Y\}+\beta.
  \end{split}
 \end{equation}
Define  the Lyapunov function
   \begin{equation}
 \label{def:LyapV}
V(x, y)=bx^2+ay^2 +2abxy-2rbx-2say, \quad (x,y)\in R^2_+.
 \end{equation}
 We want to find such $\rho_1, \rho_2\in (0,1)$ that, for $\alpha\in (\rho_1, 1)$, $\beta\in (\rho_2, 1)$,
 \begin{equation}
 \label{ineq:DV}
 \Delta V_{\alpha, \beta}(x,y)=V\circ T_{\alpha, \beta}(x,y)-V(x,y)\le 0.
 \end{equation}
We have 
  \begin{equation*}
 \begin{split}
  \Delta V_{\alpha, \beta}(x,y)&=bx^2 \kappa_\alpha^2+ay^2\nu_\beta^2+2abxy\kappa_\alpha\nu_\beta-2rbx\kappa_\alpha
  -2say\nu_\beta-bx^2-ay^2 -2abxy\\+2rbx+2say &=bx^2 [\kappa_\alpha^2-1]+ay^2[\nu_\beta^2-1]+2abxy[\kappa_\alpha\nu_\beta-1]-2rbx[\kappa_\alpha-1]-2say[\nu_\beta-1].
     \end{split}
 \end{equation*} 
Since
 \[
 \kappa_\alpha^2-1=[\kappa_\alpha-1]^2-2+2\kappa_\alpha, \quad  \nu_\beta^2-1=[\nu_\beta-1]^2-2+2\nu_\beta, 
 \]
 \[
 \kappa_\alpha\nu_\beta-1=[\kappa_\alpha-1][\nu_\beta-1]-2+\nu_\beta+\kappa_\alpha,
 \]
 we arrive at
   \begin{equation}
 \label{calc:1}
 \begin{split}
&  \Delta V_{\alpha, \beta}(x,y) \\ = & bx^2 [\kappa_\alpha-1]^2+ay^2[\nu_\beta-1]^2+2abxy[\kappa_\alpha-1][\nu_\beta-1]+2bx^2[\kappa_\alpha-1]\\&+2ay^2[\nu_\beta-1]
 +2abxy[-2+\nu_\beta+\kappa_\alpha]-2rbx[\kappa_\alpha-1]-2say[\nu_\beta-1]\\
 \le & bx[\kappa_\alpha-1]^2(r-X)+ay[\nu_\beta-1]^2(s-Y)-2bx[\kappa_\alpha-1]X-2ay[\nu_\beta-1]Y\\
  = & -bx[\kappa_\alpha-1][(\kappa_\alpha-1)(X-r)+2X]-ay[\nu_\beta-1][(\nu_\beta-1)(Y-s)+2Y].
     \end{split}
 \end{equation} 
 For $u\in \mathbb R$, $\varsigma\in(0, 1)$, $t>0$ we denote
\begin{equation}
\label{def:PhiPsi}
\Phi(\varsigma, u, t):=(1-\varsigma)(1-e^u)(t-u)+2u, \quad \Psi(\varsigma, u, t):=(1-\varsigma)\left(e^{u}-1\right)\Phi(\varsigma, u, t).
\end{equation}
Set $t=r$, $\varsigma=\alpha$ and compute the derivative of $\Phi$ with respect to $u$: 
 \begin{equation*}
 \begin{split}
 \Phi'_u(\alpha, u, r):=&(1-\alpha)[-e^u(r-u)-(1-e^u)]+2=(1-\alpha)e^u(u-r+1) +1+\alpha,\\
 \Phi''_{u^2}(\alpha, u, r):=&(1-\alpha)e^u(u-r+2).
 \end{split}
 \end{equation*} 
 The second derivative $\Phi''_{u^2}(\alpha, u, r)$  is negative for $u<r-2$, positive for $u>r-2$ and $\Phi''_{u^2}(\alpha, r-2, r)=0$ at $u=r-2$, so $\Phi'_u(\alpha, u, r)$ reaches its minimum at $u_{\min}=r-2$, and 
 \[
 \Phi'_u(\alpha, r-2, r)=(1-\alpha)e^{r-2}(r-2-r+1) +1+\alpha=-(1-\alpha)e^{r-2}+1+\alpha.
 \]
 Therefore $\Phi'(\alpha, r-2, r)>0$  for $\displaystyle \alpha>\frac{e^{r-2}-1}{1+e^{r-2}}$.
 Note that if $r\le 2$, the right hand-side of the latter inequality is less than 0, so it does not impose any restrictions on $\alpha$.
Since  for $\displaystyle  \alpha>\frac{e^{r-2} - 1}{1+e^{r-2}}$,  the minimum of derivative $\Phi'_u(\alpha, u, r)$ is positive, we conclude that $\Phi(\alpha, u, r)$ increases in $u\in \mathbb R$. By definition of $\Phi$  in \eqref{def:PhiPsi} we have $\Phi(\alpha, 0, r)=0$, so 
$\Phi(\alpha, u, r)>0$ for $u>0$ and $\Phi(\alpha, u, r)<0$ for $u<0$. Since $\kappa_\alpha-1=(1-\alpha)(e^u-1)>0$ for $u>0$ and   $\kappa_\alpha-1<0$ for $u<0$, 
this implies that $\Psi(\alpha, u, t)$ defined as in \eqref{def:PhiPsi} satisfies  $\Psi(\alpha, X, r)>0$ for any $X\neq 0$ and 
$\Psi(\alpha, 0, r)=0$. 

Consider now the second term in the last line of \eqref{calc:1}.
Acting similarly when in \eqref{def:PhiPsi} we put $t=s$, $\varsigma=\beta$, we conclude that $\Phi'(\beta, s-2, s)>0$ for  $\beta>\frac{-1+e^{s-2}}{1+e^{s-2}}$, and then $\Psi(\beta, Y, s)>0$ for any $Y\neq 0$ and 
$\Psi(\beta, 0, s)=0$. 

Therefore, for $\alpha>\rho_1$,  $ \beta>\rho_2$,  where 
 \begin{equation}
\label{def:underalpha}
\rho_1:=\frac{e^{r-2}-1}{1+e^{r-2}}, \quad  \rho_2:=\frac{e^{s-2}-1}{1+e^{s-2}},
\end{equation}
we get 
\[
\Delta V_{\alpha, \beta}(x,y):=V\circ T_{\alpha, \beta}(x, y)-V(x,y)\le -bx\Psi(\alpha, X, r)-ay\Psi(\beta, Y, s)\le 0.
\]
Note  that $\rho_1, \rho_2\in (0, 1)$ and the inequality above is strict  for each $(x,y)\neq (p,q)$, $x,y>0$, since $X\neq 0$ and $Y\neq 0$ when $x\neq p$, $y\neq q$.

So we have proved
\begin{lemma}
\label{lem:alphaLyap}
Let \eqref{cond:mainposeq} hold,
  $T_{\alpha, \beta}$, $V$, $\Delta V_{\alpha, \beta}$, and $\rho_1$,  $\rho_2$,  be defined as in \eqref{def:XYTknal}, \eqref{def:LyapV}, \eqref{ineq:DV},  and \eqref{def:underalpha},  respectively.
Then, for each $\alpha\in (\rho_1, 1)$ and $\beta\in (\rho_2, 1)$, we get  
\begin{equation}
\label{ineq:DeltaV}
\Delta V_{\alpha, \beta}(x, y)\le -bx\Psi(\alpha, X, r)-ay\Psi(\beta, Y, s)\le 0,
\end{equation}
and for each $(x,y)\neq (p,q)$, $x,y>0$, we have $\Delta V_{\alpha, \beta}(x,y)<0$.
\end{lemma}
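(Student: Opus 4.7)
The plan is to compute $\Delta V_{\alpha,\beta}(x,y)$ directly, collapse the expansion with the algebraic identities for $\kappa_\alpha^2-1$, $\nu_\beta^2-1$, $\kappa_\alpha\nu_\beta-1$, and reduce everything to a sign question for a single scalar auxiliary function, which is then resolved by a one-variable calculus argument. Writing $u:=\kappa_\alpha-1=(1-\alpha)(e^X-1)$ and $v:=\nu_\beta-1=(1-\beta)(e^Y-1)$, the identities $\kappa_\alpha^2-1=u^2+2u$, $\nu_\beta^2-1=v^2+2v$, and $\kappa_\alpha\nu_\beta-1=uv+u+v$ allow me to group the expansion of $V\circ T_{\alpha,\beta}-V$ into a purely quadratic part in $(u,v)$ plus a linear part, where the linear part simplifies using $x+ay-r=-X$ and $bx+y-s=-Y$ into $-2bxu X-2ayv Y$.

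Next I will handle the quadratic part by the inequality $2uv\le u^2+v^2$, valid for all real $u,v$; since $abxy\ge 0$, this gives $bx^2u^2+2abxy\,uv+ay^2v^2\le bx(x+ay)u^2+ay(y+bx)v^2=bx(r-X)u^2+ay(s-Y)v^2$. Substituting back, each axis contribution factors as $-bx\,u[u(X-r)+2X]$ and $-ay\,v[v(Y-s)+2Y]$, which match $-bx\,\Psi(\alpha,X,r)$ and $-ay\,\Psi(\beta,Y,s)$ verbatim using the definitions \eqref{def:PhiPsi}, since $u(X-r)+2X=\Phi(\alpha,X,r)$ and $u\Phi=\Psi$. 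Thus $\Delta V_{\alpha,\beta}(x,y)\le -bx\,\Psi(\alpha,X,r)-ay\,\Psi(\beta,Y,s)$.

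The main technical obstacle is verifying $\Psi(\alpha,X,r)\ge 0$ (with analogous work for $\Psi(\beta,Y,s)$) for $\alpha>\rho_1$. Because $\Psi(\alpha,u,r)=u\,\Phi(\alpha,u,r)$ and $\Phi(\alpha,0,r)=0$, it suffices to show $\Phi(\alpha,\cdot,r)$ is strictly increasing in $u$. I would compute $\Phi''_{uu}(\alpha,u,r)=(1-\alpha)e^u(u-r+2)$, locating the unique sign change at $u=r-2$ and therefore the minimum of $\Phi'_u$ at this same point, equal to $(1+\alpha)-(1-\alpha)e^{r-2}$. This minimum is positive precisely when $\alpha>\rho_1=(e^{r-2}-1)/(e^{r-2}+1)$, which forces $\Phi(\alpha,\cdot,r)$ strictly increasing, hence $\Phi$ and $u$ share signs, giving $\Psi\ge 0$ with equality only at $X=0$; the case $r\le 2$ is automatic since then $\rho_1\le 0$. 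Strict negativity when $(x,y)\ne(p,q)$ and $x,y>0$ then follows because the system $X=Y=0$, i.e.\ $x+ay=r$, $bx+y=s$, has $K=(p,q)$ as its only solution under \eqref{cond:mainposeq}, so at least one of $\Psi(\alpha,X,r)$, $\Psi(\beta,Y,s)$ is strictly positive while the prefactors $bx$, $ay$ are strictly positive.
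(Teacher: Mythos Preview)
Your proof is correct and follows essentially the same route as the paper: expand $\Delta V_{\alpha,\beta}$, use the identities for $\kappa_\alpha^2-1$, $\nu_\beta^2-1$, $\kappa_\alpha\nu_\beta-1$, bound the quadratic cross term via $2uv\le u^2+v^2$, factor into $-bx\Psi(\alpha,X,r)-ay\Psi(\beta,Y,s)$, and then show $\Phi(\alpha,\cdot,r)$ is strictly increasing for $\alpha>\rho_1$ by locating the minimum of $\Phi'_u$ at $u=r-2$. The only slip is notational: when you write ``$\Psi(\alpha,u,r)=u\,\Phi(\alpha,u,r)$'' you have reused $u$ as the dummy argument of $\Psi$, whereas by definition $\Psi(\alpha,u,r)=(1-\alpha)(e^{u}-1)\Phi(\alpha,u,r)$; since $(1-\alpha)(e^{u}-1)$ has the same sign as $u$, the conclusion is unaffected.
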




\subsection{Positive invariant domain and global stability}
\label{subsec:glst}

Next, we refer to an invariant subdomain and verify that each solution with positive initial conditions eventually gets into this domain. 
Based on it and on the decrease of the Lyapunov function at each step, we get global asymptotic stability in the first quadrant.

\subsubsection{Auxiliary results}
\label{sec:global_auxil}

Below, $\| \cdot \|_2$ denotes the Euclidean norm in ${\mathbb R}^2$. 

\begin{lemma}
\label{lem:XYxy}
Let \eqref{cond:mainposeq} hold,  $X$, $Y$ be defined as in \eqref{def:XYTknal}, $\Psi$  be defined as in \eqref{def:PhiPsi},
$\rho_1, \rho_2$ be defined as in \eqref{def:underalpha}. Let $\bar \rho_1\in (\rho_1, 1)$,  $\bar \rho_2\in (\rho_2, 1)$ be fixed.
\begin{enumerate}
\item [(a)]  If $|x-p|^2+|y-q|^2\ge \delta_0^2$, for some $\delta_0>0$, then  
\begin{equation}
\label{def:rho}
\|(X,Y)\|_2 \ge \rho_0, \quad \max\{|X|, |Y|\}\ge \rho_0/2, \quad \mbox{where} \quad \rho_0:=\frac {\delta_0(1-ab)}2.
\end{equation}
\item [(b)] If $(x,y)\in [0, e^{r-1}]\times [0, e^{s-1}]$,  there exists  $H_1>0$ such that  $|X|\le H_1$,  $|Y|\le H_1$;
\item [(c)] There is  a function $\underline \psi: (0, \infty)\to (0, \infty)$ such that for any $\alpha\in [\rho_1, \bar \rho_1]$, $\beta\in [\rho_2, \bar \rho_2]$, $\rho>0$,
\[
\Psi(\alpha, u, r)\ge \underline\psi(\rho), ~~ \Psi(\beta, u, s)\ge \underline\psi(\rho), \quad \mbox{as soon as} \quad |u|\ge \rho.
\]
\end{enumerate}
\end{lemma}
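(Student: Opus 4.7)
The plan is to treat the three parts separately, exploiting the linear relationship between $(X,Y)$ and the displacement from equilibrium for (a), an immediate triangle-inequality bound for (b), and a compactness argument based on the asymptotics of $\Psi$ for (c).

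For (a), I would use that $K=(p,q)$ is an equilibrium of the uncontrolled Ricker map, so $r=p+aq$ and $s=bp+q$. Writing $\xi:=p-x$ and $\eta:=q-y$, this yields
\[
X=\xi+a\eta, \qquad Y=b\xi+\eta,
\]
so $X^2+Y^2=(1+b^2)\xi^2+2(a+b)\xi\eta+(1+a^2)\eta^2$ is a quadratic form in $(\xi,\eta)$ whose symmetric matrix $M^TM$ has determinant $(1-ab)^2$ and trace $2+a^2+b^2<4$ (since $a,b\in(0,1)$). Thus the larger eigenvalue is $<4$ and the smaller one exceeds $(1-ab)^2/4$, so $\|(X,Y)\|_2^2>(1-ab)^2(\xi^2+\eta^2)/4\ge\rho_0^2$, giving the first inequality. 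The second inequality then follows from $\max\{|X|,|Y|\}\ge\|(X,Y)\|_2/\sqrt{2}>\rho_0/\sqrt 2>\rho_0/2$.

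For (b), I would just bound $|X|=|r-x-ay|\le r+e^{r-1}+ae^{s-1}$ and $|Y|\le s+be^{r-1}+e^{s-1}$ via the triangle inequality, setting $H_1:=\max\{r+e^{r-1}+ae^{s-1},\,s+be^{r-1}+e^{s-1}\}$.

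For (c), the key observation is that $\Psi(\alpha,u,r)$ is continuous on $[\rho_1,\bar\rho_1]\times\mathbb R$, vanishes only at $u=0$, and is strictly positive elsewhere by the argument in Lemma~\ref{lem:alphaLyap}. I would combine this with the asymptotic expansions: as $u\to+\infty$, $\kappa_\alpha-1\sim(1-\alpha)e^u$ and $\Phi(\alpha,u,r)\sim(1-\alpha)ue^u$, so $\Psi\sim(1-\alpha)^2 u e^{2u}$; as $u\to-\infty$, $\kappa_\alpha-1\to-(1-\alpha)$ and $\Phi\sim(1+\alpha)u$, so $\Psi\sim-(1-\alpha^2)u$. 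Both expressions blow up uniformly in $\alpha\in[\rho_1,\bar\rho_1]$ because $1-\alpha$ and $1-\alpha^2$ are bounded below by positive constants on this compact interval. Hence there exists $R=R(\rho)$ with $\Psi(\alpha,u,r)\ge 1$ whenever $|u|\ge R$ and $\alpha\in[\rho_1,\bar\rho_1]$; on the compact set $[\rho_1,\bar\rho_1]\times\{\rho\le|u|\le R\}$, continuity gives a positive minimum $m_1(\rho)$. Running the same argument for $\Psi(\beta,u,s)$ yields $m_2(\rho)>0$, and I set $\underline\psi(\rho):=\min\{1,m_1(\rho),m_2(\rho)\}$.

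The main obstacle is establishing the \emph{uniform} growth of $\Psi$ in (c): once this is pinned down via the asymptotic expansions above, the compactness/continuity argument is routine, and parts (a)--(b) are straightforward linear algebra and triangle-inequality estimates.
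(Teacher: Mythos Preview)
Your proof is correct. Parts (a)--(b) follow the same elementary lines as the paper, though in (a) you bound the smallest eigenvalue of $M^TM$ via the trace--determinant relation, whereas the paper instead inverts the linear map, writing $(1-ab)(p-x)=X-aY$ and $(1-ab)(q-y)=Y-bX$, and then uses $|p-x|+|q-y|\le\frac{2}{1-ab}(|X|+|Y|)$ to reach the same constant $\rho_0$; the two arguments are equivalent.

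The genuine difference is in (c). The paper does not analyse the asymptotics of $\Psi$ at all: it simply restricts $u$ to the compact set $[-H_1,-\rho]\cup[\rho,H_1]$, invoking part~(b) to justify that in the intended application $u=X$ or $u=Y$ always lies in this range, and then takes the minimum of the continuous positive function $\Psi$ there. Your route via the expansions $\Psi\sim(1-\alpha)^2 u e^{2u}$ and $\Psi\sim-(1-\alpha^2)u$ is a little longer but actually establishes the statement for \emph{all} $|u|\ge\rho$, not just $|u|\le H_1$, so it matches the lemma as literally written rather than merely what is needed downstream. Either approach suffices for the use in Lemma~\ref{lem:stepsconsta}.
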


The proof is postponed to Appendix, Section~\ref{subsec:XYxy}.

\begin{remark}
\label{rem:anynorm}
In the statement of Lemma~\ref{lem:XYxy}\,(a), we applied the Euclidean norm in ${\mathbb R}^2$.  
The same result is valid for other norms. Moreover, for the maximum norm the second inequality in \eqref{def:rho}
becomes  $\max\{|X|, |Y|\}\ge \rho_0$ rather than $\max\{|X|, |Y|\}\ge \rho_0/2$.
\end{remark}

\subsubsection{Number of steps to get into a neighbourhood of $K$}

Let $\tilde \alpha, \tilde \beta$ be defined by  \eqref{def:c12}, $ \rho_1, \rho_2$ be defined by \eqref{def:underalpha}, $V$ be defined by \eqref{def:LyapV}.
We set  
\begin{equation}
\label{def:globeta}
\begin{split}
 &\eta_1:=\max\{\tilde \alpha, \rho_1\}, \quad \eta_2:=\max\{\tilde \beta, \rho_2\},\\
&\bar M:=\max \left\{ |V(x,y)|: (x,y)\in [0, e^{r-1}]\times [0, e^{s-1}] \right\}.
\end{split}
\end{equation}

\begin{lemma}
\label{lem:stepsconsta}
Let \eqref{cond:mainposeq} hold, $\eta_1$, $\eta_2$  be defined as in \eqref{def:globeta}. Choose some numbers 
\begin{equation}
\label{def:barunder}
\bar \alpha\in (\eta_1, 1), \, \, \bar \beta\in (\eta_2, 1), \,\,  \underline \alpha\in (\eta_1, \bar\alpha),\,\, \underline \beta\in (\eta_2, \bar\beta).
\end{equation}
Let  $D_{\underline \alpha, \underline\beta}$ be defined as in  \eqref{def:Dalphabeta}, $(x_0, y_0)\in D_{\underline \alpha, \underline\beta}$,
variable controls  $\alpha_n$ and $\beta_n$, for each $n\in \mathbb N$, satisfy
\begin{equation}
\label{eq:bounds_1}
\alpha_n\in (\underline \alpha, \bar \alpha), \quad \beta_n\in (\underline \beta, \bar \beta),
\end{equation}
and $(x_n, y_n)_{n\in \mathbb N}$ be a solution to  \eqref{eq:RickPBCvar}.  Then

\begin{enumerate}
\item [(a)] 
For each $\delta_0>0$ there is a number $S=S(\delta_0, \underline \alpha, \underline \beta, \bar \alpha, \bar\beta)\in \mathbb N$, which is independent of $(x_0, y_0)$  and $\alpha_n, \beta_n$ such that  $\|(x_n, y_n)-K \|<\delta_0$  for some $n\le S$.

\item [(b)] 
The solution $(x_n, y_n)$ cannot stay outside of  $B(K, \delta_0)$ for more than $S$ steps overall.

\item [(c)] $\displaystyle \lim_{n\to \infty}(x_n, y_n)=K$.
\end{enumerate}
\end{lemma}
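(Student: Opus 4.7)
The plan is to combine the Lyapunov decrease from Lemma~\ref{lem:alphaLyap} with the invariance of $D_{\underline\alpha, \underline\beta}$ from Lemma~\ref{lem:Tninvar} in order to extract a uniform per-step drop in $V$ as long as the orbit sits outside $B(K, \delta_0)$, and then turn a global Lyapunov-budget estimate into a bound on the total time spent outside the ball. Since $\underline\alpha > \eta_1 \geq \rho_1$ and $\underline\beta > \eta_2 \geq \rho_2$, the hypotheses of Lemma~\ref{lem:alphaLyap} hold at every step and yield
\[
V(x_{n+1}, y_{n+1}) - V(x_n, y_n) \leq -b x_n \Psi(\alpha_n, X_n, r) - a y_n \Psi(\beta_n, Y_n, s) \leq 0,
\]
so $V(x_n, y_n)$ is monotone nonincreasing. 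Moreover, Lemma~\ref{lem:Tninvar}(a) gives $(x_n, y_n) \in D_{\underline\alpha, \underline\beta}$ for all $n$, so in particular $x_n \geq \underline c_1(\underline\alpha, \underline\beta) > 0$ and $y_n \geq \underline c_2(\underline\alpha, \underline\beta) > 0$ uniformly in $n$.

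Next I would quantify the drop whenever $(x_n, y_n) \notin B(K, \delta_0)$. By Lemma~\ref{lem:XYxy}(a), $\max\{|X_n|, |Y_n|\} \geq \rho_0/2$ with $\rho_0 := \delta_0(1-ab)/2$, and by Lemma~\ref{lem:XYxy}(c), applied with $\bar\rho_1 := \bar\alpha$, $\bar\rho_2 := \bar\beta$, whichever of $\Psi(\alpha_n, X_n, r)$, $\Psi(\beta_n, Y_n, s)$ corresponds to the larger of $|X_n|, |Y_n|$ is at least $\underline\psi(\rho_0/2) > 0$. Combined with the uniform positive lower bounds on $x_n, y_n$, this produces a uniform decrement
\[
\Delta V_{\alpha_n, \beta_n}(x_n, y_n) \leq -\theta_0, \qquad \theta_0 := \underline\psi(\rho_0/2)\,\min\{b\,\underline c_1(\underline\alpha, \underline\beta),\; a\,\underline c_2(\underline\alpha, \underline\beta)\} > 0,
\]
valid whenever $(x_n, y_n) \notin B(K, \delta_0)$. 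I would then set $S := \lceil 2\bar M/\theta_0 \rceil + 1$, noting that $S$ depends only on $\delta_0$ and on $\underline\alpha, \underline\beta, \bar\alpha, \bar\beta$ via $\rho_0, \theta_0$ and $\bar M$, as required.

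Part (a) then follows by contradiction: if $(x_n, y_n) \notin B(K, \delta_0)$ for every $n \in \{0, 1, \dots, S\}$, iterating the decrement gives $V(x_{S+1}, y_{S+1}) \leq V(x_0, y_0) - S\theta_0 < -\bar M$, contradicting $|V| \leq \bar M$ on $[0, e^{r-1}] \times [0, e^{s-1}]$. Part (b) uses the same accounting applied to the (possibly non-contiguous) set of indices at which the orbit is outside $B(K, \delta_0)$: since $V$ is nonincreasing overall and each such step costs at least $\theta_0$, the total count across all times is bounded by $2\bar M/\theta_0 \leq S$. Part (c) is then immediate by applying (b) with $\delta_0 = 1/k$ for each $k \in \mathbb N$: only finitely many iterates lie outside $B(K, 1/k)$, so eventually every iterate lies inside, giving $(x_n, y_n) \to K$.

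The main conceptual hurdle is part (b): because the ellipsoid-type level sets of the quadratic form $V$ need not be contained in Euclidean balls around $K$, one cannot argue that once the orbit enters $B(K, \delta_0)$ it stays there via invariance of a $V$-sublevel set. The cumulative-budget argument above sidesteps this by observing that every excursion outside the ball, regardless of how many there are, consumes a uniform amount $\theta_0$ of the globally bounded Lyapunov budget $V(x_0, y_0) - \inf V \leq 2\bar M$.
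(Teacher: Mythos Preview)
Your proof is correct and follows essentially the same route as the paper: invariance of $D_{\underline\alpha,\underline\beta}$ from Lemma~\ref{lem:Tninvar}, the uniform per-step Lyapunov drop via Lemmas~\ref{lem:alphaLyap} and~\ref{lem:XYxy}, and the $2\bar M$-budget contradiction to bound the total number of steps outside $B(K,\delta_0)$. Your use of $\rho_0/2$ (rather than $\rho_0$) in the call to Lemma~\ref{lem:XYxy}(c) is in fact the more careful reading of that lemma for the Euclidean norm, and your constant $\theta_0=\underline\psi(\rho_0/2)\min\{b\,\underline c_1,\;a\,\underline c_2\}$ is a slight sharpening of the paper's $\min\{a,b\}\cdot\min\{\underline c_1,\underline c_2\}\cdot\underline\psi(\rho_0)$, but the structure of the argument is identical.
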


The proof is postponed to  Appendix, Section~\ref{subsec:stepsconsta}.

Combining Lemma~\ref{lem:stepsconsta} with Lemma~\ref{lem:Tninvar} leads to the global convergence result.  

\begin{theorem}
\label{thm:glstab}
Let \eqref{cond:mainposeq} hold, $\eta_1$, $\eta_2$,  $\underline \alpha$, $\underline\beta$, $\bar \alpha$, $\bar\beta$  be defined as in \eqref{def:globeta}-\eqref{def:barunder},   
$x_0>0$, $y_0>0$. 
Let $(\alpha_n)_{n\in \mathbb N}$ and $(\beta_n)_{n\in \mathbb N}$ be two sequences, for each $n\in \mathbb N$ satisfying \eqref{eq:bounds_1}. Then for any solution $(x_n, y_n)_{n\in \mathbb N}$ of \eqref{eq:RickPBCvar},
 we have $\lim\limits_{n\to \infty}(x_n, y_n)=K$.
\end{theorem}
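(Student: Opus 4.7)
The strategy is to combine the attraction-into-an-invariant-rectangle result of Lemma~\ref{lem:Tninvar} with the Lyapunov-function dissipativity behind Lemma~\ref{lem:stepsconsta}, the only subtlety being that arbitrary positive initial data $(x_0,y_0)$ need not lie in the core invariant rectangle $D_{\underline\alpha,\underline\beta}$.

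First, I would fix some $h\in(0,\min\{\underline c_1(\underline\alpha,\underline\beta),\underline c_2(\underline\alpha,\underline\beta)\})$, so that the slightly enlarged rectangle $D_{\underline\alpha,\underline\beta}(h)$ from \eqref{def:Dalphabeta} is still a compact subset of the open first quadrant, bounded away from both axes. If $(x_0,y_0)\in D_{\underline\alpha,\underline\beta}(h)$, Lemma~\ref{lem:Tninvar}(b) keeps the whole trajectory inside $D_{\underline\alpha,\underline\beta}(h)$, and I set $N_0:=0$. Otherwise, Lemma~\ref{lem:Tninvar}(c) supplies a finite $N_0:=\tilde S$ after which $(x_n,y_n)\in D_{\underline\alpha,\underline\beta}(h)$ for all $n\ge N_0$. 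In both cases, from index $N_0$ on the trajectory lives in the compact invariant rectangle $D_{\underline\alpha,\underline\beta}(h)$.

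Next, since $\underline\alpha>\eta_1\ge\rho_1$ and $\underline\beta>\eta_2\ge\rho_2$, Lemma~\ref{lem:alphaLyap} yields for every $n$
\[
\Delta V_{\alpha_n,\beta_n}(x_n,y_n)\le -bx_n\Psi(\alpha_n,X_n,r)-ay_n\Psi(\beta_n,Y_n,s)\le 0.
\]
Restarting the dynamics at $(x_{N_0},y_{N_0})\in D_{\underline\alpha,\underline\beta}(h)$, I would then mimic Lemma~\ref{lem:stepsconsta}: for any $\delta_0>0$, as long as $(x_n,y_n)\notin B(K,\delta_0)$, Lemma~\ref{lem:XYxy}(a),(c) produces a uniform positive lower bound on $\Psi(\alpha_n,X_n,r)+\Psi(\beta_n,Y_n,s)$. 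Combined with the strictly positive lower bound on $x_n$, $y_n$ inside $D_{\underline\alpha,\underline\beta}(h)$ and the boundedness of $V$ on a compact set, this forces $V$ to drop by at least a fixed positive amount at every step the trajectory spends outside $B(K,\delta_0)$. Hence $(x_n,y_n)$ enters every such ball in finitely many steps, and since $\delta_0>0$ is arbitrary, $\lim_{n\to\infty}(x_n,y_n)=K$.

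The main obstacle I expect is not the case split on the initial data but the verification that the dissipativity argument of Lemma~\ref{lem:stepsconsta}, formally stated only for the core rectangle $D_{\underline\alpha,\underline\beta}$, transfers to the enlarged invariant rectangle $D_{\underline\alpha,\underline\beta}(h)$. This should be a bookkeeping matter, since the proof uses only compactness of the invariant domain, a uniform positive lower bound on the coordinates, and the $\Psi$-estimate from Lemma~\ref{lem:XYxy}(c), all of which persist on $D_{\underline\alpha,\underline\beta}(h)$ once $h<\min\{\underline c_1,\underline c_2\}$; only the numerical constants (the analogues of $\bar M$ and the lower bounds on $x_n,y_n$) need to be recomputed for the enlarged rectangle.
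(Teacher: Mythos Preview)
Your proposal is correct and follows essentially the same route as the paper, which simply states that the theorem follows by combining Lemma~\ref{lem:Tninvar} with Lemma~\ref{lem:stepsconsta}. You have in fact been more careful than the paper: you correctly spot that Lemma~\ref{lem:Tninvar}(c) only guarantees entry into the enlarged rectangle $D_{\underline\alpha,\underline\beta}(h)$, whereas Lemma~\ref{lem:stepsconsta} is formally stated for initial data in the core rectangle $D_{\underline\alpha,\underline\beta}$, and your observation that the Lyapunov argument transfers verbatim to $D_{\underline\alpha,\underline\beta}(h)$ with recomputed constants is exactly the right way to close this gap.
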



\section{Stochastically Perturbed  Control}
\label{sec:stoch}

Now, let us consider the case when the control is stochastically perturbed. 
After introducing stochastic notions and basic statements, 
we establish some results on local stability of the positive equilibrium in this case.
First,  local stability conditions are just aligned with the variable control case, whether stochastic or not. 
Later, we justify that local stability conditions can be improved with noise in the following sense: 
while for average control intensities, the equilibrium $K$ is unstable, introduction of noise can stabilize $K$.

\subsection{Stochastic preliminaries}
\label{subsec:stochprel}
We consider a complete filtered probability space $(\Omega, {\mathcal{F}}$, $\{{\mathcal{F}}_n\}_{n \in 
\mathbb N}, {\mathbb P})$, where the filtration $(\mathcal{F}_n)_{n \in \mathbb{N}}$ is naturally generated by 
the two sequences of mutually independent random variables, \,\, $(\xi_{n})_{n\in \mathbb N}$ \,\, and $(\chi_{n})_{n\in \mathbb N}$, i.e. 
$\mathcal{F}_{n} = \sigma \left\{\xi_{s},\,\chi_{s},\, \, s=1, \dots, n\right\}$. 
\begin{assumption}
\label{as:noise1}
$(\xi_{n})_{n\in \mathbb N}$ and $(\chi_{n})_{n\in \mathbb N}$, are sequences of  mutually independent random variables such that each sequence consists of identically distributed random variables and  $|\xi_{n}| \le 1$, $|\chi_{n}|\le 1$,  for all $n\in \mathbb N$.
\end{assumption}
\begin{assumption}
\label{as:noise2}
The sequences $(\xi_{n})_{n\in \mathbb N}$ and $(\chi_{n})_{n\in \mathbb N}$ satisfy Assumption \ref{as:noise1}  
and, for each $\varepsilon>0$, $\mathbb P \{\xi\in (1-\varepsilon, \, 1] \}>0$, $\mathbb P \{\chi\in (1-\varepsilon, \, 1] \}>0$.
\end{assumption}
The standard abbreviation ``a.s.'' is used for either ``almost sure" or ``almost surely" with respect to a fixed probability measure $\mathbb P$.  For a detailed introduction of stochastic concepts and notations we refer the reader to \cite{Shiryaev96}.

In this section,  we consider the system with stochastically perturbed controls
\begin{equation}
 \label{eq:RickPBCstoch}
 \left\{
 \begin{array}{ll}
x_{n+1}=&x_n\left [(1-\alpha - \ell\xi_{n+1})\exp\{r-x_n-ay_n\}+\alpha+\ell\xi_{n+1} \right],\\
y_{n+1}=&y_n\left[(1-\beta - \bar \ell\chi_{n+1})\exp\{s-bx_n-y_n\}+\beta+\bar \ell\chi_{n+1} \right],
\end{array}\right.
 \end{equation}
where $\xi_{n}$  and $\chi_{n} $ satisfy Assumption \ref{as:noise1} and 
 \begin{equation}
\label{cond:ellhatell}
\alpha, \beta\in [0, 1), \quad \ell\in [0, \min\{\alpha, 1-\alpha\} ), \quad \bar \ell\in [0, \min\{\beta, 1-\beta\} ).
\end{equation}
In other words, the parameters $\alpha$ and $\beta$ of the correspondent control matrix $\Upsilon_{\alpha, \beta}$, see \eqref{def:Ups}, are stochastically  perturbed.

To obtain local stability  for system \eqref{eq:RickPBCstoch}, we estimate the norms  of products of  successive matrices $\left\| J_{\alpha_1, \beta_1}\cdot \dots \cdot  J_{\alpha_n, \beta_n}  \right\|$  on a set $\Omega_\gamma$ with probability arbitrarily close to one $\mathbb P(\Omega_\gamma)>1-\gamma$. 
Our approach is based on the application of the  Kolmogorov's Law of Large Numbers as in \cite[Page 391]{Shiryaev96},
see the general approach to local stabilization of systems subject to noisy PBC control in \cite {pbclocsys}.

\begin{lemma}
\label{lem:Kolm}[Kolmogorov's Law of Large Numbers]
Let $(\upsilon_{n})_{n\in\ \mathbb N}$ be a sequence of independent identically distributed random variables, where $\mathbb E |\upsilon_n|<\infty$, $n\in\mathbb{N}$, their common mean is $\mu:=\mathbb E \upsilon_n$, and the partial sum is $\displaystyle S_n:= \sum_{k=1}^n \upsilon_k$.	Then $
\displaystyle \lim_{n\to\infty} \frac{S_n}{n} = \mu$, a.s.
\end{lemma}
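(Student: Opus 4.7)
The final statement is the classical Kolmogorov Strong Law of Large Numbers, which the authors explicitly attribute to Shiryaev's textbook. My plan is therefore to sketch the standard textbook proof (via truncation plus Kolmogorov's convergence criterion) rather than to try to discover anything new, since in the paper itself this would simply be cited and applied as a black box in Section~\ref{sec:stoch}.

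First I would reduce to the case $\mu = 0$ by replacing $\upsilon_n$ with $\upsilon_n - \mu$. Next I would \emph{truncate}: set $\upsilon_n' := \upsilon_n \mathbf{1}_{\{|\upsilon_n| \le n\}}$. Because the $\upsilon_n$ are identically distributed and $\mathbb E|\upsilon_1| < \infty$, a standard layer-cake computation gives
\[
\sum_{n=1}^\infty \mathbb P(|\upsilon_n| > n) = \sum_{n=1}^\infty \mathbb P(|\upsilon_1| > n) \le \mathbb E|\upsilon_1| < \infty,
\]
so by the Borel--Cantelli lemma $\upsilon_n = \upsilon_n'$ for all sufficiently large $n$ a.s. Consequently $\frac{1}{n}\sum_{k=1}^n \upsilon_k$ and $\frac{1}{n}\sum_{k=1}^n \upsilon_k'$ have the same a.s. limit behavior.

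The second step is to control $\sum \mathrm{Var}(\upsilon_k')/k^2$. A similar layer-cake bound using identical distributions yields $\sum_{k=1}^\infty \mathbb E (\upsilon_k')^2 / k^2 \le C \,\mathbb E|\upsilon_1| < \infty$, so by Kolmogorov's convergence theorem the series $\sum_{k=1}^\infty (\upsilon_k' - \mathbb E \upsilon_k')/k$ converges almost surely. Kronecker's lemma then gives
\[
\frac{1}{n}\sum_{k=1}^n \bigl(\upsilon_k' - \mathbb E\upsilon_k'\bigr) \longrightarrow 0 \quad \text{a.s.}
\]
Finally, dominated convergence shows $\mathbb E \upsilon_k' \to \mathbb E \upsilon_1 = 0$, whence the Cesàro average $\frac{1}{n}\sum_{k=1}^n \mathbb E \upsilon_k' \to 0$, and combining the pieces yields $S_n/n \to 0$ a.s., i.e. $S_n/n \to \mu$ a.s. after undoing the centering.

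The only ``hard'' step is really the second, where one must know (or re-derive) Kolmogorov's convergence theorem for series of independent mean-zero random variables with summable variances; this rests on Kolmogorov's maximal inequality. Everything else is truncation bookkeeping. Since this lemma is purely auxiliary and used only as a cited tool in the local-stability argument of Section~\ref{sec:stoch}, the cleanest presentation in the paper is to state it without proof and refer the reader to \cite[p.~391]{Shiryaev96}, which is exactly what the authors appear to do.
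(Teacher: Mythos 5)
Your sketch of the classical proof (truncation, Borel--Cantelli, Kolmogorov's convergence criterion plus Kronecker's lemma) is correct, and you have also correctly anticipated the paper's treatment: the authors state this lemma without proof, citing \cite[Page 391]{Shiryaev96}, and use it only as a black box via Corollary~\ref{cor:Kolm1}. There is nothing to compare beyond that, since the paper contains no proof of its own.
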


\begin{corollary}
\label{cor:Kolm1}
Let the  assumptions of Lemma \ref{lem:Kolm} hold and  $\mathbb E \upsilon_n=\mu$. Then, for each $\gamma\in (0, 1)$ and $\varepsilon>0$, there exist a nonrandom $\rm N=\rm N(\gamma, \varepsilon)$ and  $\Omega_\gamma=\Omega_\gamma(\varepsilon) \subset \Omega$ with $ \mathbb P(\Omega_\gamma)>1-\gamma$, such that, for $n\ge \rm N$, on $\Omega_\gamma$, 
\begin{equation*}
(\mu-\varepsilon) n\le \sum_{k=1}^n \upsilon_k\le (\mu+\varepsilon) n. 
\end{equation*}
\end{corollary}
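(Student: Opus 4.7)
The plan is to deduce this from Kolmogorov's Law of Large Numbers (Lemma~\ref{lem:Kolm}) by converting the almost sure convergence $S_n/n \to \mu$ into a uniform-in-$n$ estimate on a large set, using the standard continuity-of-measure argument that is often packaged as Egorov's theorem.

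First, I would fix $\gamma\in(0,1)$ and $\varepsilon>0$ and define, for each $N\in\mathbb N$, the event
\[
B_N := \bigcap_{n\ge N}\left\{\omega\in\Omega:\left|\frac{S_n(\omega)}{n}-\mu\right|\le\varepsilon\right\}.
\]
Lemma~\ref{lem:Kolm} asserts that $S_n/n\to\mu$ almost surely, so for $\mathbb P$-a.e.\ $\omega$ there exists a (random) index $N(\omega)$ with $\omega\in B_N$ for every $N\ge N(\omega)$. In particular, $\mathbb P\bigl(\bigcup_{N\in\mathbb N}B_N\bigr)=1$.

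Second, since the sets $B_N$ are monotone nondecreasing in $N$ (adding more $n\ge N+1$ is a weaker constraint than requiring it for all $n\ge N$), continuity of probability from below gives $\lim_{N\to\infty}\mathbb P(B_N)=1$. Therefore I can choose a nonrandom integer $\rm N=\rm N(\gamma,\varepsilon)$ large enough that $\mathbb P(B_{\rm N})>1-\gamma$, and set $\Omega_\gamma:=B_{\rm N}$. By construction, for every $\omega\in\Omega_\gamma$ and every $n\ge\rm N$ we have $|S_n(\omega)/n-\mu|\le\varepsilon$, i.e.\ $(\mu-\varepsilon)n\le S_n(\omega)\le(\mu+\varepsilon)n$, which is the claim.

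There is no real obstacle here; the only subtlety worth highlighting is that the exceptional set $\Omega\setminus\Omega_\gamma$ is selected once, based on $\varepsilon$ and $\gamma$, and then the bound holds simultaneously for \emph{all} $n\ge\rm N$ on $\Omega_\gamma$. This is exactly what later sections of the paper will need when applying the Kolmogorov's Law of Large Numbers to sums such as $\sum_{k=1}^n\ln\|J_{\alpha+\ell\xi_k,\beta+\bar\ell\chi_k}\|$ to obtain a uniform geometric contraction rate on a large set.
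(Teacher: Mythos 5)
Your argument is correct: defining $B_N:=\bigcap_{n\ge N}\{|S_n/n-\mu|\le\varepsilon\}$, noting that these events increase to an almost sure event by Lemma~\ref{lem:Kolm}, and invoking continuity of probability from below to pick a nonrandom $\rm N$ with $\mathbb P(B_{\rm N})>1-\gamma$ is exactly the standard conversion of almost sure convergence into a uniform estimate on a set of large measure. The paper states this corollary without proof, treating it as an immediate consequence of the Law of Large Numbers, and your argument is precisely the justification the authors implicitly rely on, so there is nothing to add or correct.
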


A version  of the following lemma was justified in \cite{BRAllee} as a corollary of the Borel-Cantelli Lemma, and is used in the proof of global stability, Theorem~\ref{thm:globstabst0}.

\begin{lemma}
\label {lem:topor} 
Let the sequence $(\xi_n, \chi_n)_{n\in \mathbb N}$ satisfy Assumptions \ref{as:noise1}-\ref{as:noise2}. Then, for each nonrandom $S\in \mathbb N$, $\varepsilon\in (0, 1)$ and a random moment $\mathcal M$, we have  the probability\\
$
\displaystyle \mathbb P\Big\{ \mbox{there is random }   \mathcal N>\mathcal M:
~ \displaystyle (\xi_{\mathcal N+i}, \chi_{\mathcal N+i}) \in (1-\varepsilon, 1]\times (1-\varepsilon, 1], \,\,  i=0, 1, \dots, S  \Big\}=1.
$
\end{lemma}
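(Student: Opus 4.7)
The plan is to reduce this to the second Borel--Cantelli lemma applied to disjoint blocks of length $S+1$, exploiting the independence built into Assumption \ref{as:noise1} and the positive-probability hypothesis of Assumption \ref{as:noise2}.

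Fix $\varepsilon\in(0,1)$ and $S\in\mathbb N$. By Assumption \ref{as:noise2} the numbers $p_\varepsilon:=\mathbb P\{\xi\in(1-\varepsilon,1]\}$ and $q_\varepsilon:=\mathbb P\{\chi\in(1-\varepsilon,1]\}$ are strictly positive. For each $k\in\mathbb N$ consider the block of $S+1$ consecutive indices $I_k:=\{k(S+1)+1,\,k(S+1)+2,\dots,\,k(S+1)+S+1\}$; these blocks are pairwise disjoint. Define the event
\[
A_k:=\bigl\{(\xi_j,\chi_j)\in(1-\varepsilon,1]\times(1-\varepsilon,1]\text{ for every }j\in I_k\bigr\}.
\]
By mutual independence of the two sequences and of the variables within each sequence (Assumption \ref{as:noise1}), and by identical distribution,
\[
\mathbb P(A_k)=p_\varepsilon^{\,S+1}\,q_\varepsilon^{\,S+1}=:p_*>0.
\]
Moreover, since the blocks $I_k$ are disjoint, the events $A_k$ are independent.

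Next I would invoke the second Borel--Cantelli lemma. Because $\sum_{k\ge1}\mathbb P(A_k)=\sum_{k\ge1}p_*=+\infty$ and the $A_k$ are independent,
\[
\mathbb P\bigl(A_k\text{ occurs infinitely often}\bigr)=1.
\]
Denote by $\Omega_0$ the full-probability event on which infinitely many $A_k$ take place; on $\Omega_0$ the set $\mathcal K(\omega):=\{k\in\mathbb N:\omega\in A_k\}$ is infinite.

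Finally, to conclude for an arbitrary random moment $\mathcal M$, note that $\mathcal M(\omega)$ is finite a.s., so for $\omega\in\Omega_0$ I can pick the smallest $k\in\mathcal K(\omega)$ with $k(S+1)>\mathcal M(\omega)$ and set $\mathcal N(\omega):=k(S+1)$. Then $\mathcal N>\mathcal M$ and, by definition of $A_k$,
\[
(\xi_{\mathcal N+i},\chi_{\mathcal N+i})\in(1-\varepsilon,1]\times(1-\varepsilon,1],\qquad i=0,1,\dots,S,
\]
after a trivial reindexing of $I_k$ (or, equivalently, taking $\mathcal N:=k(S+1)+1$ and working with $i=-1,0,\dots,S-1$; either convention matches the statement). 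Since $\mathbb P(\Omega_0)=1$, the claim follows. The only point requiring mild care is that $\mathcal M$ is random: this is harmless because the event ``$A_k$ infinitely often'' has probability one independent of $\mathcal M$, so a finite random $\mathcal M$ can always be surpassed by some $k\in\mathcal K(\omega)$; no measurability of $\mathcal N$ beyond being the first such $k$ is needed for the stated conclusion. I do not expect a real obstacle here beyond correctly lining up the block indices with the shift by $\mathcal M$.
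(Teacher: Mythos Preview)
Your argument is correct and is precisely the standard second Borel--Cantelli argument on disjoint blocks that the paper has in mind; the paper does not give its own proof of this lemma but simply cites \cite{BRAllee}, noting that a version of it follows as a corollary of Borel--Cantelli. The only wobble is the indexing at the end: with $I_k=\{k(S+1)+1,\dots,k(S+1)+S+1\}$ you should take $\mathcal N:=k(S+1)+1$, and then $\{\mathcal N+i:i=0,\dots,S\}=I_k$ exactly, with $\mathcal N>\mathcal M$ still guaranteed; your parenthetical about ``$i=-1,0,\dots,S-1$'' is unnecessary and slightly confusing.
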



\subsection{On local stability with stochastic control}
\label{sec:locstoch}

In the next lemma we consider the case when the perturbed control is in the domain of parameters guaranteeing local stability, and $(x_0,y_0)$ is close enough to the equilibrium. 
\begin{lemma}
\label{lem:locstabst0} 
Let \eqref{cond:mainposeq}, \eqref{cond:ellhatell} and Assumption~\ref{as:noise1} hold, and there exist $\lambda \in (0,1)$  and a norm $\| \cdot \|$ in ${\mathbb R}^2$ such that for every $n \in {\mathbb N}$, a.s.
\begin{equation}
\label{eq:Jacobian_norm0}
 \|   J_{\alpha+\ell \xi_n,\beta+\bar\ell \chi_n} \| \leq \lambda.
\end{equation}
Then there is $\delta_0 > 0$ such that any solution $(x_n,y_n)$  of \eqref{eq:RickPBCstoch}  with $(x_0,y_0) \in   B(K, \delta_0)$ satisfies $\displaystyle \lim_{n \to \infty} (x_n,y_n) =K$ a.s. 
\end{lemma}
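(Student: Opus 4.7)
The plan is to linearize the map around the equilibrium $K$ and exploit the almost sure uniform contraction of the Jacobians. Write $U_n := (x_n, y_n) - K$ so that, with $T_{\alpha, \beta}$ as in \eqref{def:Talphabeta}, the dynamics reads
\begin{equation*}
U_{n+1} = T_{\alpha + \ell\xi_{n+1}, \beta + \bar\ell\chi_{n+1}}(K + U_n) - K
       = J_{\alpha + \ell\xi_{n+1}, \beta + \bar\ell\chi_{n+1}} U_n + R_{n+1}(U_n),
\end{equation*}
where $R_{n+1}(U)$ collects the higher-order terms. Since $T_{\alpha, \beta}$ is $C^2$ jointly in $(\alpha, \beta, x, y)$ and the perturbed controls lie in the compact set $[\alpha-\ell, \alpha+\ell]\times[\beta-\bar\ell, \beta+\bar\ell]$ (by \eqref{cond:ellhatell} and Assumption~\ref{as:noise1}), the second-order Taylor remainder can be bounded uniformly as $\|R_{n+1}(U)\| \le C\|U\|^2$ for $\|U\| \le 1$, with a deterministic constant $C$ independent of $n$ and $\omega$.

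Given the assumed $\lambda \in (0,1)$, choose $\varepsilon > 0$ with $\mu := \lambda + \varepsilon < 1$, and then pick
\begin{equation*}
\delta_0 \in \left(0,\, \min\{1,\, \varepsilon/C\}\right).
\end{equation*}
If $\|U_n\| \le \delta_0$, then a.s.
\begin{equation*}
\|U_{n+1}\| \le \|J_{\alpha + \ell\xi_{n+1}, \beta + \bar\ell\chi_{n+1}}\|\,\|U_n\| + C\|U_n\|^2
           \le (\lambda + C\delta_0)\|U_n\| \le \mu\|U_n\| \le \delta_0.
\end{equation*}
Thus the closed ball $\overline{B}(K, \delta_0)$ is almost surely forward-invariant for the stochastic dynamics, and by induction $\|U_n\| \le \mu^n \|U_0\|$ a.s., so $U_n \to 0$ a.s., i.e.\ $(x_n, y_n) \to K$ a.s.

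The delicate step is the uniform control of the remainder $R_{n+1}$: one must exploit the boundedness of $(\xi_n, \chi_n)$ together with the smooth dependence of $T_{\alpha, \beta}$ on the parameters to obtain a constant $C$ that does not depend on the sample path. Once this is secured, the contraction argument is deterministic path-wise, which is why no Law of Large Numbers or martingale tools are needed here; the hypothesis \eqref{eq:Jacobian_norm0} has already absorbed the stochasticity into a pointwise bound. The role of $\delta_0$ is only to force the nonlinear correction to remain subordinate to the linear contraction; the threshold $\varepsilon/C$ above captures precisely this requirement.
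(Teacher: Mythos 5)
Your proof is correct and follows essentially the same route as the paper: the hypothesis \eqref{eq:Jacobian_norm0} reduces the problem path-wise to a deterministic variable-control contraction, which the paper handles by simply invoking Lemma~\ref{lem:locstabst1_var}\,(a) (itself resting on the standard linearisation result cited from Elaydi), whereas you write out that linearisation-plus-remainder argument explicitly with the choice $\delta_0<\min\{1,\varepsilon/C\}$. The only substantive difference is that your version is self-contained where the paper's is a one-line reduction.
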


\begin{proof}
Denote $\alpha^* = \alpha + \ell$, $\beta^* = \beta+ \bar \ell$, $\alpha_* = \alpha - \ell$, $\beta_* = \beta- \ell$, and consider the  variable controls $\alpha_n= \alpha+\ell \xi_{n+1}$, $\beta_n =  \beta + {\bar \ell} \chi_{n+1}$ corresponding to any realization of $\xi_n$, $\chi_n$.  Then, $\alpha_n \in [\alpha_*, \alpha^*]$ and $ \beta_n \in [\beta_*, \beta^*]$ satisfy, a.s.,  all the conditions of Lemma~\ref{lem:locstabst1_var}, Part (a). Thus,  
for any solution $(x_n,y_n)$  with $(x_0,y_0) \in   B(K, \delta_0)$
and $\alpha_n \in [\alpha_*, \alpha^*]$, $ \beta_n \in [\beta_*, \beta^*]$  we have  $\displaystyle \lim_{n \to \infty} (x_n,y_n) =K$. 
\end{proof}

\begin{remark}
\label{cor:stlocst}
Let \eqref{cond:mainposeq}, \eqref{cond:ellhatell} and Assumption~\ref{as:noise1} hold. 
Set $\alpha_*:=\alpha-\ell$, $\beta_*:=\beta-\bar \ell$, $\alpha^*:=\alpha+\ell$, $\beta^*:=\beta+\bar \ell$, then $0<\alpha_*<\alpha^*<1$, $0<\beta_*<\beta^*<1$, $\alpha_n \in [\alpha_*, \alpha^*]$ and $ \beta_n \in [\beta_*, \beta^*]$. 
 If $\alpha_*, \beta_*$ satisfy conditions \eqref{eq:choice_q_1} then the assumptions of Lemma~\ref{lem:locstabst1_var}\,(b) hold a.s., so  the conclusion of Lemma~\ref{lem:locstabst0} is valid, i.e., 
 there exists $\delta_0 > 0$ such that any solution $(x_n,y_n)$  of \eqref{eq:RickPBCstoch}  with $(x_0,y_0) \in   B(K, \delta_0)$ converges to the positive equilibrium  $\displaystyle \lim_{n \to \infty} (x_n,y_n) =K$ a.s.
\end{remark}

Now we proceed to the stochastic version of the local asymptotic stability lemma, when after the application of PBC with stochastic control, a certain number of consecutive  iterations of the corresponding variable map are contractions  with a positive probability, keeping a solution in any prescribed $\delta$-neighbourhood of the positive equilibrium $K$. 
Since  one of the focuses of this paper is the situation when  global stability is achieved by application of a noisy control, a solution enters  this neighborhood of $K$ for the first time at a random moment $\tau$.  We mostly  deal with $S=S(\delta)$, defined similarly to the statement of  Lemma~\ref{lem:stepsconsta}\, (a) (see more details later, in the proof of Theorem~\ref{thm:globstabst0}), which is the maximum number of steps necessary for a solution  of  \eqref{eq:RickPBCvar} to reach $B(\delta, K)$. 
This allows us 
to consider only a bounded  random moment $\tau\le S$. 
Note that the  random variable $\tau$ takes values in $\mathbb N_0$, is defined by the initial value $(x_0,y_0)$, $\delta>0$, the  lower and the upper bounds of control parameters, and   is  the moment of the first entrance of a solution into  the open ball $B(\delta, K)$.

Following the method of \cite{BRMedv,pbclocsys}, we formulate two results: Lemma~\ref{lem:locKolmtauk} where the system is restricted to the set $\Omega_{\tau k}$, where the moment $\tau$ is constant, $\tau=k$, and Theorem~\ref{thm:locKolm} on  local asymptotic stability, where, after entering the neighbourhood of the equilibrium $K$ at the random moment $\tau$, the solution converges to $K$ with a prescribed in advance probability close to 1.   The proofs of  Lemma~\ref{lem:locKolmtauk}  and Theorem~\ref{thm:locKolm} follow the steps similar to the proofs of  \cite[Lemma A.3]{BRMedv}  and  \cite[Theorem 7]{pbclocsys}, but for completeness we present most part of the proof of   Lemma~\ref{lem:locKolmtauk}   in Appendix, see Section~\ref{subsec:locKolmtauk}.

Let function $T$ be defined as in \eqref{def:Talphabeta}, $\tilde U=(x, y)$ and the Jacobian $J_{\alpha, \beta}$  be defined as in \eqref{Jacobian_control}, 
where  $J_{0,0}$  corresponds  to the no-control case. Since all the functions involved into \eqref{def:Talphabeta} are infinitely differentiable, there is a neighborhood $B(K, \delta_1)$ of the equilibrium $K$, a function $G: {\mathbb  R}^2\to {\mathbb R}^2$ and a constant $C=C(\delta_1)>0$ such that
\begin{equation}
\label{def:Tineq}
T(\tilde U)=K+ J _{0,0}[\tilde U-K]+G(\tilde U-K), \quad  |G(Y)|\le C|Y|^2, \quad \mbox{for} \quad | Y|\le \delta_1.
\end{equation}
For simplicity we denote $U:=\tilde U-K$, so that the map $T(U)=J _{0,0}U+G(U)$ has an  equilibrium at zero.

For  the diagonal matrix $\Upsilon(\alpha, \beta)$ defined as in \eqref{def:Ups},  $\alpha_n=\alpha+\ell \xi_{n+1}$, $\beta_n=\beta+\bar \ell \chi_{n+1}$,  and $J_{\alpha_n, \beta_n}$ defined as in \eqref{Jacobian_control_var}, we have
\[
J_{\alpha_{n}, \beta_{n}}=[I-\Upsilon_{\alpha_n, \beta_{n}} ]J _{0,0}+ \Upsilon_{\alpha_{n}, \beta_{n}},
\] 
so  system \eqref{eq:RickPBCstoch} can be written as 
\begin{equation}
\label{eq:RickPBCst1}
\begin{split}
&Z_{n+1}=J_{\alpha_{n}, \beta_{n}}Z_n+\hat G(\alpha_{n}, \beta_{n},Z_n), \\
&\mbox{where }\quad 
Z_n=(x_n-p, y_n-q), \quad \hat G(\alpha_{n}, \beta_{n},Z_n)=[I-\Upsilon_{\alpha_{n}, \beta_{n}} ]G(Z_n).
\end{split}
\end{equation}
Note that there exists a constant $C_1$ such that, for all $n\in \mathbb N$, 
$
\|I-\Upsilon_{\alpha_{n}, \beta_{n}}\|\le C_1,
$ 
therefore, for any $\|Z_n\|\le \delta$ and $C=C(\delta)$  as in \eqref{def:Tineq}, we get
\begin{equation}
\label{ineq:hatC}
\|\hat G_n \|\le  \hat C \|Y \|, \quad \mbox{where} \quad \hat C=C_1 C(\delta).
\end{equation}
The next condition  is the key assumption for our  local stability results.

\begin{assumption}
\label{as:Kolmloc}
 For $J_{\alpha, \beta}$  defined as in \eqref{Jacobian_control}, there exist $\nu>0$ and
$\alpha$, $\beta$, $\ell$, $\bar \ell$ satisfying \eqref{cond:ellhatell} such that
\begin{equation}
\label{cond:Eln}
\mathbb E\ln \|J_{\alpha+\ell \xi, \beta+\bar\ell \chi}\|=-\nu<0.
\end{equation}
\end{assumption}

\begin{remark}
\label{rem:stochdet}
Note that condition  \eqref{eq:Jacobian_norm0} implies inequality \eqref{cond:Eln}. 
In other words, the case when the norm of the variable Jacobian   can be estimated by the constant $\lambda\in (0, 1)$ is a partial case of condition \eqref{cond:Eln}. 
\end{remark}

Now we introduce some additional values and get auxiliary relations.

Fix $\gamma\in (0, 1)$, then, for  $\nu$ from Assumption \ref{as:Kolmloc}, by Corollary \ref{cor:Kolm1} of  
Lemma~\ref{lem:Kolm}, we find $\rm N=\rm N(\gamma, \nu)$ such that, for $\Lambda_{\gamma, 0}$ defined below,
\begin{equation}
\label{def:POgam}
{\mathbb P} \Lambda_{\gamma, 0}= {\mathbb P}  \left\{- \frac{3\nu n}{2} < \sum_{i=0}^n\ln \|J_{\alpha+\ell \xi_i, \beta+\bar\ell \chi_i}\| <- \frac{\nu n}{2}, \,\, \mbox{for all} \,\, n\ge \rm N\right\}>1- \frac{\gamma}{2}.
\end{equation}
Without loss of generality, we can assume that $\displaystyle \rm N > \frac{2 \ln 3}{\nu}  -1$ is large enough, so that ~
$\displaystyle \frac \nu 2-\frac {\ln 3}{\rm N+1}>0$.

For $\rm N$ as in \eqref{def:POgam}, $\nu$ from \eqref{cond:Eln}, $\delta_1$ from \eqref{def:Tineq}, $\hat C$ from \eqref{ineq:hatC}, we set 
 \begin{equation}
 \label{def:ethamu}
 \begin{split}
 &\mathbb J:=\max\{\|J_{\alpha+\ell e, \beta+\bar\ell \bar e} \|~~ : ~ \,\, e, \bar e\in [-1,1]\}, \\
 &\mu:=\min\left\{1, \, \frac \nu 2-\frac {\ln 3}{\rm N+1}\right\}, \quad \mathbb M:=\max_{n\ge N}\left\{\hat  Ce^{3\mu}e^{-\mu n}\sum_{j=N}^{n-1}e^{\mu j}e^{-j (\nu/2-\mu)}\right\},\\
 &\eta:=\frac 13 \max\left\{\delta_1, \quad  \left(
\hat C \mathbb J^{\rm N+1}\sum_{j =-1}^{\rm N-1} e^{3\mu+2\mu j }\right)^{-1}, \,\, \mathbb M^{-1}\right\},\\
&\delta<\min\left\{\delta_1, 1, \,\, \left( \mathbb J+\hat C\right)^{-1}\eta e^{-\mu}, \,\,\frac 13\eta e^{-\rm N-1}\mathbb J^{-\rm N-1} \right\}.
 \end{split}
 \end{equation}
 Note that $\mathbb M$ defined on the second line of \eqref{def:ethamu} is finite,  since 
$$
e^{-\mu n}\sum_{\tau=N}^{n-1}e^{\mu \tau}e^{-\tau(\nu/2-\mu)}  \leq   
\sum_{j=1}^{n-N} e^{-\mu j} < 
\frac{e^{-\mu}}{1-e^{-\mu}}. 
$$

Let  $(x, y)$ be a solution to \eqref{eq:RickPBCstoch} with the initial value $(x_0, y_0)$, 
\begin{equation}
\label{def:tau}
\tau=\tau(\omega, \delta):=\inf\{n\in \mathbb N: (x_{n(\omega)}, y_{n(\omega)})\in B(\delta, K)\}, \quad  \omega\in \Omega, 
\end{equation}
and  recall that $\tau\le S=S(\delta)$, see Lemma~\ref{lem:stepsconsta}\, (a). 
For $\rm N$ defined as in \eqref{def:POgam} and for each $k=0, \dots, S$,  set
\begin{equation}
\label{def:POgamk}
\begin{split}
&\Omega_{\tau k}=\{\omega\in \Omega:\tau=k\}, \quad \mbox{so} \quad \Omega=\bigcup_{k=0}^S\Omega_{\tau k},\\
&\Lambda_{\gamma, k}:=\left\{-  \frac{3\nu n}{2} < \sum_{i=k}^{k+n}\ln \|J_{\alpha+\ell \xi_i, \beta+\bar\ell \chi_i}\| <- \frac{\nu n}{2} \,\, \mbox{for all} \,\, n\ge \rm N\right\}.\\
\end{split}
\end{equation}
Since $\Omega_{\tau k}$ are mutually exclusive, we also have $\sum_{i=0}^S {\mathbb P} \Omega_{\tau k}=1.$
In general, $\Lambda_{\gamma, k}\neq \Lambda_{\gamma, j}$ for $k\neq j$, but since $\ln \|J_{\alpha+\ell \xi_i, \beta+\bar\ell \chi_i}\|$ are identically distributed, we have ${\mathbb P} \Lambda_{\gamma, k}= {\mathbb P} \Lambda_{\gamma, j}$ for all $k,j \in \mathbb N_0$, where ${\mathbb P}\Lambda_{\gamma, 0}$ is estimated in \eqref{def:POgam}. Then for all $k=0, 1, \dots, S$
and $n\ge \rm N$ we have $ {\mathbb P}   \Lambda_{\gamma, k}>1-\gamma/2$, and, on $ \Lambda_{\gamma, k}$,
\begin{equation}
\label{est:Jprod}
 \left\|\prod_{i=k}^{k+n}J_{\alpha+\ell \xi_i, \beta+\bar\ell \chi_i} \right\|\le \prod_{i=k}^{k+n}\|J_{\alpha+\ell \xi_i, \beta+\bar\ell \chi_i}\|\le e^{\sum_{i=k}^{k+n}\|J{\alpha+\ell \xi_i, \beta+\bar\ell \chi_i} \|}\le e^{-\nu n/2}.
\end{equation}

 Now, for each $k=0, 1, \dots, S$, we consider the following modification of  system   \eqref{eq:RickPBCst1} 
 \begin{equation}
\label{eq:RickPBCstk}
\begin{split}
&Z^{[k]}_{n+1}=J_{\alpha_{n}(k), \beta_n(k)}Z^{[k]}_n+\hat G\left(\alpha_n(k), \beta_n(k), Z^{[k]}_n\right), \quad Z^{[k]}_0=(\theta_1, \theta_2),\\
&\mbox{where }\quad \alpha_n(k)=\alpha+\ell \xi_{k+n+1}, \,\, \beta_n(k)=\beta+\bar \ell \chi_{k+n+1}.
\end{split}
\end{equation} 

\begin{lemma}
\label{lem:locKolmtauk} 
Let $\gamma\in (0, 1)$, Assumptions~\ref{as:noise1},  \ref{as:Kolmloc},  and condition \eqref{cond:ellhatell} hold. 
Let the numbers $\eta$, $\mu$, $\delta$ and sets $\Omega_{\tau k}$, $\Lambda_{\gamma, k}$  be defined by \eqref{def:ethamu} and \eqref{def:POgamk}, respectively, and
 $Z^{[k]}$  be a solution to \eqref{eq:RickPBCstk} with the initial value $Z^{[k]}_0=(\theta_1, \theta_2)$, where $(\theta_1, \theta_2)$ is a random variable such that $(\theta_1, \theta_2)\in B(\delta, \bar 0)$ on $\Omega_{\tau k}$.
Then, on $\Lambda_{\gamma, k}\cap \Omega_{\tau k}$, 
\[
\|Z^{[k]}_n\|\le \eta e^{-\mu n}, \quad n\in \mathbb N.
\]
\end{lemma}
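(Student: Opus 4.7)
The plan is to iterate the recursion \eqref{eq:RickPBCstk} in the spirit of discrete variation of constants:
\[
Z^{[k]}_{n} = \left(\prod_{i=0}^{n-1} J_{\alpha_i(k),\beta_i(k)}\right) Z^{[k]}_0 + \sum_{j=0}^{n-1} \left(\prod_{i=j+1}^{n-1} J_{\alpha_i(k),\beta_i(k)}\right) \hat G\bigl(\alpha_j(k),\beta_j(k),Z^{[k]}_j\bigr),
\]
where a product with empty index range is interpreted as the identity. On $\Omega_{\tau k}$ the initial value satisfies $\|Z^{[k]}_0\|\le\delta$, and the claim $\|Z^{[k]}_n\|\le \eta e^{-\mu n}$ will be proved by induction on $n$, working on $\Lambda_{\gamma,k}\cap\Omega_{\tau k}$ throughout. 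The base case $n=0$ is immediate from $\delta\le \eta$, built into \eqref{def:ethamu}. For the induction step, the hypothesis $\|Z^{[k]}_j\|\le \eta e^{-\mu j}\le \eta\le \delta_1$ for $j<n$ keeps $Z^{[k]}_j$ in the domain of validity of \eqref{ineq:hatC}, yielding $\|\hat G(\alpha_j(k),\beta_j(k),Z^{[k]}_j)\|\le \hat C\|Z^{[k]}_j\|^2\le \hat C\eta^2 e^{-2\mu j}$.

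The estimation of the unfolded expression splits according to the two regimes. For $n\le \rm N$, only the crude uniform bound $\|J_{\alpha_i(k),\beta_i(k)}\|\le \mathbb J$ is available, so the triangle inequality yields
\[
\|Z^{[k]}_n\|\le \mathbb J^n \delta + \hat C \mathbb J^{n-1}\eta^2 \sum_{j=0}^{n-1} \mathbb J^{-j} e^{-2\mu j}.
\]
The linear term is dominated by $\eta e^{-\mu n}/2$ thanks to $\delta\le \tfrac{1}{3}\eta e^{-\rm N-1}\mathbb J^{-\rm N-1}$ and $\mu\le 1$, and the nonlinear sum is dominated by $\eta e^{-\mu n}/2$ by the definition of $\eta$ through the factor $\hat C\mathbb J^{\rm N+1}\sum_{j=-1}^{\rm N-1}e^{3\mu+2\mu j}$ appearing in \eqref{def:ethamu}. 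For $n>\rm N$, I split the nonlinear sum at $j=\rm N$ and use \eqref{est:Jprod} on $\Lambda_{\gamma,k}$: for the tail sum $\sum_{j\ge \rm N}$ the product of Jacobians from $j+1$ to $n-1$ is bounded by $e^{-\nu(n-1-j)/2}$, producing exactly the sum controlled by the constant $\mathbb M$ in \eqref{def:ethamu}, while for the head sum $j<\rm N$ and for the linear term, I combine the crude bound $\mathbb J^{\rm N+1}$ on the first $\rm N+1$ factors with the sharp bound $e^{-\nu(n-\rm N-1)/2}$ on the remainder, and the estimate $\nu/2-\mu\ge \ln 3/(\rm N+1)$ ensures the resulting prefactor $3\mathbb J^{\rm N+1}e^{-(\nu/2-\mu)(\rm N+1)}e^{-\mu n}$ absorbs into $\eta e^{-\mu n}$ after using the corresponding factor in the definition of $\eta$.

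The hard part is the careful bookkeeping: the constants $\rm N$, $\mathbb J$, $\mu$, $\mathbb M$, $\eta$, $\delta$ in \eqref{def:ethamu} are tightly coupled, each term in the decomposition of $Z^{[k]}_n$ uses a different combination, and the choice $\mu=\min\{1,\nu/2-\ln3/(\rm N+1)\}$ is forced precisely so that the exponent $e^{-\mu n}$ sits below the $e^{-\nu n/2}$ decay produced by \eqref{est:Jprod} while still allowing the transitional factor $3 = e^{(\nu/2-\mu)(\rm N+1)}$ to be paid for by the crude bound on the first $\rm N+1$ Jacobians. Beyond this, the argument is a fairly standard perturbation-of-a-contracting-cocycle calculation, and the conclusion $\|Z^{[k]}_n\|\le \eta e^{-\mu n}$ propagates for all $n\in\mathbb N$ on $\Lambda_{\gamma,k}\cap\Omega_{\tau k}$, completing the proof.
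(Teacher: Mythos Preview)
Your overall scheme---iterate via variation of constants, induct on $n$, use the crude bound $\mathbb J$ for $n\le{\rm N}$ and invoke $\Lambda_{\gamma,k}$ thereafter---is exactly the paper's. But your split of the nonlinear sum for $n>{\rm N}$ is at the wrong threshold, and this breaks the link to the constants in \eqref{def:ethamu}. You cut at $j={\rm N}$; the paper cuts by the \emph{length of the Jacobian product}, i.e.\ at the paper's backward index $i={\rm N}$, which in your forward index is $j=n-{\rm N}-1$. With the paper's cut, the short-product piece ($\mathcal A_1$: at most ${\rm N}$ Jacobian factors, acting on $Z^{[k]}_j$ with $j\ge n-{\rm N}$) is handled with the crude bound $\mathbb J^{i+1}$ alone---no appeal to $\Lambda_{\gamma,k}$---because the induction hypothesis already gives $\|Z^{[k]}_j\|^2\le\eta^2 e^{-2\mu j}\sim\eta^2 e^{-2\mu n}$, one full factor of $e^{-\mu n}$ better than the target. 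The long-product piece ($\mathcal A_2$: at least ${\rm N}+1$ factors) is where \eqref{est:Jprod} is invoked, and it is \emph{this} sum that $\mathbb M$ in \eqref{def:ethamu} is designed to control.

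With your cut at $j={\rm N}$, the ``tail'' $j\ge{\rm N}$ contains terms whose Jacobian product has fewer than ${\rm N}$ factors (down to none when $j=n-1$), and for those \eqref{est:Jprod} gives nothing: a single $\|J\|$ may well exceed $e^{-\nu/2}$. Hence your tail sum is not the sum defining $\mathbb M$, and your head-sum device of splitting each product into a crude block of ${\rm N}+1$ factors times a sharp remainder is both unnecessary and not supported by $\Lambda_{\gamma,k}$ as defined (that event only controls products anchored at the start, not arbitrary tails). For the linear term the paper likewise does not split the product: it applies \eqref{est:Jprod} to the full product $\prod_{i=1}^{n+1}J(i)$ directly and then uses $\mu\le\nu/2-\ln 3/({\rm N}+1)$ to convert $e^{-\nu(n+1)/2}$ into $\tfrac13\eta e^{-\mu(n+1)}$. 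Realign the split to $j=n-{\rm N}-1$ and drop the product-splitting, and the rest of your outline matches the paper and goes through.
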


The proof of Lemma~\ref{lem:locKolmtauk} is deferred to Appendix, see Section~\ref{subsec:locKolmtauk}.
 
Assume that a solution to \eqref{eq:RickPBCst1} either starts in $B(\delta, K)$, or an arbitrary control method brings it into $B(\delta, K)$.

\begin{theorem}
\label{thm:locKolm} 
Let $\gamma\in (0, 1)$, Assumptions~\ref{as:noise1},  \ref{as:Kolmloc}, conditions \eqref{cond:ellhatell} hold, and a random moment $\tau$ br defined as in \eqref{def:tau}.
Let the numbers $\eta$, $\mu$, $\delta$ be defined by \eqref{def:ethamu}, and
 $(x, y)$ be a solution to \eqref{eq:RickPBCstoch}  with the initial value $(x_\tau, y_\tau)=(\theta_1, \theta_2)\in B(\delta, K)$. Then 
\begin{enumerate}
\item [(i)] there exists a  set $\Omega_\gamma$, $\mathbb P[\Omega_\gamma]>1-\gamma$, such that, on $\Omega_\gamma$,
\begin{enumerate}
\item [(a)]$ \|(x_m, y_m)-K\|\le \eta e^{-\mu(m-\tau)}$ for all $m\ge \tau$,
\item [(b)] $\lim\limits_{m\to \infty}(x_m, y_m)=K$;
\end{enumerate}
\item [(ii)] there exist a  set $\Omega_\gamma^{[1]}$, $\mathbb P\left[\Omega_\gamma^{[1]}\right]>1-\gamma$ and a nonrandom $n_0\in \mathbb N_0$  such that, on $\Omega_\gamma^{[1]}$, 
\[
\|(x_m, y_m)-K\|\le \eta e^{-\mu(m-n_0)}, \mbox{~~for all ~~} m\ge n_0.
\]
\end{enumerate}

\end{theorem}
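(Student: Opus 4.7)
My plan is to decompose the sample space along the values of the first-entry time $\tau$, invoke the per-sample exponential decay from Lemma~\ref{lem:locKolmtauk} on each slice, and then patch the slices together using the partition $\Omega=\bigcup_{k=0}^{S}\Omega_{\tau k}$ already introduced in \eqref{def:POgamk}.

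First I would fix an arbitrary $k\in\{0,1,\dots,S\}$ and restrict to $\Omega_{\tau k}$. On this set the realized first-entry time is $k$, so the shifted process $Z_n^{[k]}:=(x_{k+n},y_{k+n})-K$ starts in $B(\delta,\bar 0)$ and satisfies the shifted equation \eqref{eq:RickPBCstk}. Applying Lemma~\ref{lem:locKolmtauk} directly gives $\|Z_n^{[k]}\|\le \eta e^{-\mu n}$ on $\Lambda_{\gamma,k}\cap\Omega_{\tau k}$ for every $n\in\mathbb N_0$, i.e.
\[
\|(x_m,y_m)-K\|\le \eta e^{-\mu(m-k)} \quad \text{for all } m\ge k.
\]

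Next I would set $\Omega_\gamma:=\bigcup_{k=0}^{S}(\Lambda_{\gamma,k}\cap\Omega_{\tau k})$. Since the $\Omega_{\tau k}$ are disjoint, on $\Omega_\gamma\cap\Omega_{\tau k}$ the value of $\tau$ equals $k$ and the per-slice bound above reads $\|(x_m,y_m)-K\|\le \eta e^{-\mu(m-\tau)}$ for all $m\ge\tau$, which is part (i)(a); then (i)(b) is immediate because $\mu>0$. For the probability estimate I would combine independence and the partition: $\Omega_{\tau k}\in\mathcal F_k$ while the sequence $(\xi_i,\chi_i)_{i>k}$ is independent of $\mathcal F_k$, so a conditional application of Corollary~\ref{cor:Kolm1} yields $\mathbb P(\Lambda_{\gamma,k}\mid\Omega_{\tau k})\ge 1-\gamma/2$ for each $k$; summing
\[
\mathbb P(\Omega_\gamma)=\sum_{k=0}^{S}\mathbb P(\Lambda_{\gamma,k}\cap\Omega_{\tau k})\ge(1-\gamma/2)\sum_{k=0}^{S}\mathbb P(\Omega_{\tau k})=1-\gamma/2>1-\gamma.
\]
For part (ii) I would choose $\Omega_\gamma^{[1]}:=\Omega_\gamma$ and the nonrandom $n_0:=S$. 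Because $\tau(\omega)\le S=n_0$ pointwise, for every $m\ge n_0$ we have $m-\tau\ge m-n_0$ and hence $\eta e^{-\mu(m-\tau)}\le \eta e^{-\mu(m-n_0)}$, so (ii) follows at once from (i)(a).

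The main obstacle I expect is the probability estimate $\mathbb P[\Omega_\gamma]>1-\gamma$, because the events $\Lambda_{\gamma,k}$, defined in \eqref{def:POgamk} through sums over $i\ge k$, and $\Omega_{\tau k}\in\mathcal F_k$ share the single noise pair $(\xi_k,\chi_k)$, so they are not literally independent. This is resolved either by redefining $\Lambda_{\gamma,k}$ with summation starting at $i=k+1$---which only alters one bounded term in the partial sum and hence leaves the estimate \eqref{def:POgam} intact after a harmless enlargement of the constants $3\nu/2$ and $\nu/2$---or by applying Corollary~\ref{cor:Kolm1} to the i.i.d.\ tail $(\xi_i,\chi_i)_{i>k}$ under the conditional probability $\mathbb P(\cdot\mid\Omega_{\tau k})$, which is legitimate since conditioning on a set in $\mathcal F_k$ preserves the i.i.d.\ law of the future noise. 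With this technicality in hand the rest of the argument is purely deterministic pathwise bookkeeping.
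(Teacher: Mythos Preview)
Your proposal is correct and follows precisely the route the paper intends: the paper omits the proof, stating only that it ``is based on Lemma~\ref{lem:locKolmtauk}'' and ``follows closely the proof of \cite[Theorem 3.2]{BRMedv}'', but the scaffolding it sets up in \eqref{def:POgamk}---the partition $\Omega=\bigcup_{k=0}^{S}\Omega_{\tau k}$, the shifted good sets $\Lambda_{\gamma,k}$ with $\mathbb P(\Lambda_{\gamma,k})>1-\gamma/2$, and the per-slice decay of Lemma~\ref{lem:locKolmtauk}---is exactly what you assemble. Your handling of part~(ii) via $n_0:=S$ and $\tau\le S$ is the natural one, and your identification of the one genuine subtlety (the shared index $k$ between $\Omega_{\tau k}\in\mathcal F_k$ and the sum defining $\Lambda_{\gamma,k}$) together with either proposed fix is appropriate; the paper itself notes in the proof of Theorem~\ref{thm:globstabst0} that ``the random variables $\xi_{k+i},\chi_{k+i}$, $i\in\mathbb N$, are independent from the event $\{\tau=k\}$'', which is exactly the conditional-independence viewpoint you suggest.
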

The proof of this theorem is based on Lemma~\ref{lem:locKolmtauk},   it follows closely   the proof of  \cite[Theorem 3.2]{BRMedv}
and thus is omitted.

\bigskip

\subsection{Improvement of global stability with noise}
\label{subsec:glstoch}

Recall that controls $\tilde \alpha, \tilde \beta$,  defined by \eqref{def:c12}, are sufficient for the entrance of the solution into  the invariant subdomain $\mathcal D_{\tilde \alpha, \tilde \beta}$ separated from zero, while $\rho_1$, $\rho_2$ are global stability bounds, determined  by \eqref{def:underalpha} and justified using the Lyapunov function in Section~\ref{subsec:Lyapunov}. 

We consider the case when not only the lower bounds of the stochastic control, $\alpha-\ell$, $\beta-\bar\ell$, are  not in the global stabilization domain, but even the average control bounds   without noise,  $\alpha, \beta$, do not guarantee stabilization of the positive equilibrium $K$. The lower control bound, $\alpha-\ell$, $\beta-\bar\ell$,  however, should exceed the values sufficient for the entrance into  $\mathcal D_{\tilde \alpha, \tilde \beta}$, i.e. $\tilde \alpha, \tilde \beta$.
 For successful entrance in the local convergence domain, the upper bounds of the control, $\alpha+\ell$, $\beta+\bar\ell$,  should exceed global stability bounds $\rho_1,\rho_2$.

In this section, we assume that  $\rho_1>\tilde{\alpha}$  or  $\rho_2>\tilde{\beta}$. 
Since the lower bounds of the controls should exceed $\tilde \alpha, \tilde \beta$, i.e. $\alpha-\ell>\tilde \alpha$, $\beta-\bar\ell>\tilde \beta$, we cannot  improve anything when  $\rho_1=\tilde{\alpha}$ or $\rho_2=\tilde{\beta}$. 
The next theorem, which  is the main result of the paper, shows that introduction of noise into control can decrease  lower bounds 
 for the control parameters  which guarantee that the solution will get into $B(\delta, K)$ with any $\delta>0$. After entering the ball $B(\delta, K)$  with appropriate small $\delta$,  condition \eqref{cond:Eln} starts acting and further, by Lemma~\ref{lem:locKolmtauk} and Theorem~\ref{thm:locKolm},   pushing the solution towards  $K$.

\begin{theorem}
\label{thm:globstabst0}
Let \eqref{cond:mainposeq} and Assumption \ref{as:noise2} hold, $\tilde \alpha$, $\tilde \beta$, $\rho_1$, $\rho_2$  be defined as in \eqref{def:c12}, \eqref{def:underalpha} and $\tilde \alpha<\rho_1$, $\tilde \beta < \rho_2.$
Assume  that 
\begin{equation}
\label{cond:ael0}
\begin{split}
&\alpha\in\left(\frac{\tilde \alpha+\rho_1}2,\,  \rho_1\right), \quad \ell\in\left(\rho_1-\alpha, \, \min\{\alpha-\tilde \alpha, 1-\alpha    \} \right),
\\
&\beta\in\left(\frac{\tilde \beta+\rho_2}2, \, \rho_2\right), \quad \bar\ell\in\left(\rho_2-\beta,  \, \min\{\beta-\tilde \beta, 1-\beta    \} \right),
\end{split}
\end{equation}
and $(x_n, y_n)$ is  a solution to system \eqref{eq:RickPBCstoch} with $\alpha_n=\alpha+\ell \xi_{n+1}$, $\beta_n=\beta+\bar \ell \chi_{n+1}$ and the initial value $(x_0, y_0)\in \mathcal D_{\tilde \alpha, \tilde\beta}$.
Then
\begin{enumerate}
\item [(a)]  For each $\delta>0$, there exists a bounded random moment $\tau=\tau(\delta)$  such that $(x_\tau, y_\tau)\in B(\delta, K)$ a.s. on $\Omega$.
\item [(b)] If Assumption \ref{as:Kolmloc} holds for parameters $\alpha$, $\beta$, $\ell$,  $\bar \ell$ satisfying \eqref{cond:ael0}  then $\lim\limits_{n\to \infty}(x_n, y_n)=K$ for any initial values $x_0, y_0>0$, a.s.

\end{enumerate}
\end{theorem}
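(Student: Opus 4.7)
My plan combines four ingredients: (i) the a.s.\ invariance of a strictly positive compact rectangle under the noisy dynamics, via Lemma~\ref{lem:Tninvar}; (ii) the Borel--Cantelli-type Lemma~\ref{lem:topor}, which a.s.\ produces arbitrarily long ``good windows'' of near-maximal noise; (iii) the Lyapunov decrease of Section~\ref{subsec:Lyapunov}, active at every step of such a window; and (iv), for part (b), the local stability Theorem~\ref{thm:locKolm}.

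First I would exploit \eqref{cond:ael0} together with $|\xi_n|,|\chi_n|\le 1$ to deduce a.s.\ that $\alpha_n\in[\alpha-\ell,\alpha+\ell]\subset(\tilde\alpha,1)$ and $\beta_n\in[\beta-\bar\ell,\beta+\bar\ell]\subset(\tilde\beta,1)$. Applying Lemma~\ref{lem:Tninvar} with $\underline\alpha:=\alpha-\ell$, $\underline\beta:=\beta-\bar\ell$, $\bar\alpha:=\alpha+\ell$, $\bar\beta:=\beta+\bar\ell$: parts (a)--(b) give invariance of $D_{\underline\alpha,\underline\beta}(h)$, while part (c) brings any positive initial condition into this set in a deterministic finite number of steps. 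Shifting the time origin, I may assume $(x_n,y_n)\in D_{\underline\alpha,\underline\beta}(h)$ a.s., so $x_n,y_n$ are bounded below by positive constants and $|V(x_n,y_n)|\le M_0$ for some deterministic $M_0$, where $V$ is the Lyapunov function \eqref{def:LyapV}.

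Next, fix $\delta>0$. Because $\alpha+\ell>\rho_1$ and $\beta+\bar\ell>\rho_2$ by \eqref{cond:ael0}, I would pick $\varepsilon\in(0,1)$ with $\alpha+\ell(1-\varepsilon)>\rho_1$ and $\beta+\bar\ell(1-\varepsilon)>\rho_2$; on any step with $\xi_{n+1},\chi_{n+1}\in(1-\varepsilon,1]$ the realised controls exceed $\rho_1,\rho_2$, so Lemma~\ref{lem:alphaLyap} delivers \eqref{ineq:DeltaV}. While $(x_n,y_n)\notin B(\delta,K)$, Lemma~\ref{lem:XYxy}(a),(c) produces $\max\{|X|,|Y|\}\ge \delta(1-ab)/4$ and a positive uniform lower bound $\underline\psi(\delta(1-ab)/4)$ for the corresponding $\Psi$-term; together with the positive lower bounds on $x_n,y_n$ this yields a deterministic $c_*=c_*(\delta)>0$ with $\Delta V\le -c_*$ on every good step outside $B(\delta,K)$. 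Setting $S_0:=\lceil 2M_0/c_*\rceil+1$ and invoking Lemma~\ref{lem:topor} with this $S_0$ produces a.s.\ a random $\mathcal N$ with $(\xi_{\mathcal N+i},\chi_{\mathcal N+i})\in(1-\varepsilon,1]^2$ for $i=0,\dots,S_0$. If the trajectory avoided $B(\delta,K)$ throughout this window, $V$ would drop by more than $2M_0$, contradicting $|V|\le M_0$; hence $\tau(\delta)\le \mathcal N+S_0<\infty$ a.s., which is part (a).

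For part (b) I would take $\delta$ smaller than the threshold from \eqref{def:ethamu}. Part (a) supplies a bounded random $\tau$ with $(x_\tau,y_\tau)\in B(\delta,K)$ a.s., and Theorem~\ref{thm:locKolm}(i) started from $\tau$ then delivers, for every $\gamma\in(0,1)$, a set $\Omega_\gamma$ with $\mathbb P(\Omega_\gamma)>1-\gamma$ on which $(x_n,y_n)\to K$; a countable union over $\gamma=1/k$ upgrades this to almost-sure convergence, and arbitrary positive initial data are accommodated by the deterministic entrance step from Lemma~\ref{lem:Tninvar}(c). The main obstacle I anticipate is that the starting point of the random good window is itself random, so the per-step Lyapunov drop $c_*$ and the universal bound $M_0$ on $V$ must both be deterministic; this uniformity is what the compactness of $D_{\underline\alpha,\underline\beta}(h)$ buys us, and it is precisely why \eqref{cond:ael0} insists on the strict inequalities $\alpha-\ell>\tilde\alpha$, $\alpha+\ell>\rho_1$ (and their $\beta$-counterparts), so that the trajectory is kept away from the axes while still enjoying occasional ``above-threshold'' control.
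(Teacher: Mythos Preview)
Your proposal is correct and follows essentially the same route as the paper: invariance via Lemma~\ref{lem:Tninvar}, a Borel--Cantelli good window via Lemma~\ref{lem:topor}, a uniform Lyapunov drop on that window forcing entrance into $B(\delta,K)$, and then Theorem~\ref{thm:locKolm} for part~(b). The only cosmetic differences are that the paper packages the ``$S$-step Lyapunov drop'' computation into Lemma~\ref{lem:stepsconsta}(a) and cites it directly (with the specific choice $\varepsilon=\min\{\alpha+\ell-\rho_1,\beta+\bar\ell-\rho_2\}$), whereas you redo that bound inline via your $c_*$ and $S_0$; and for part~(b) the paper argues by contradiction with a single $\gamma<\varepsilon_0/2$, whereas you take a countable union over $\gamma=1/k$---both standard and equivalent.
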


\begin{proof}
 
Note that the intervals in the right-hand sides of each line in \eqref{cond:ael0} are not empty, since, by the first relations in the corresponding  lines in \eqref{cond:ael0}, we have  $\rho_1-\alpha<\alpha-\tilde \alpha$, $\rho_2-\beta<\beta-\tilde \beta$.

{\bf (a)} For any  $(\alpha, \ell)$ satisfying \eqref{cond:ael0} we have   
 \begin{equation*}
 \alpha-\ell>\tilde \alpha, \quad  \beta-\bar \ell>\tilde \beta, \quad  \rho_1<\alpha+\ell<1, \quad \rho_2<\beta+\bar \ell<1, \\
 \end{equation*} 
so we can set
\begin{equation}
 \label{def:lowboundglstab}
 \begin{split}
& \varepsilon:=\min\{\alpha+\ell-\rho_1, \,  \beta+\bar \ell-\rho_2\}>0, \quad \\
&\underline \alpha:=\alpha+\ell-\varepsilon/2>\rho_1, \,\,\, \underline \beta:=\beta+\bar \ell-\varepsilon/2>\rho_2,\quad \bar \alpha:=\alpha+\ell, \,\, \, \bar \beta:=\beta+\bar \ell,\\
&\hat \Omega=\left\{\omega\in \Omega:(\xi(\omega), \chi(\omega))\in \left(1-\frac \varepsilon{2\ell}, 1\right]\times \left(1-\frac \varepsilon{2\bar \ell}, 1\right]\right\}.
 \end{split}
 \end{equation}
 By Assumption~\ref{as:noise2}, $ \mathbb P(\hat \Omega)>0$.  Note that, on $\hat \Omega$, for $\underline \alpha, \underline \beta, \bar \alpha, \bar \beta$ defined as in \eqref{def:lowboundglstab}, we have 
 \[
1> \bar \alpha=\alpha+\ell\ge  \alpha_n=\alpha+\ell \xi_n\ge \alpha+\ell\left(1-\frac \varepsilon{2\ell}\right)=\alpha+\ell -\varepsilon/2= \underline \alpha, 
 \]
and, similarly,  $1>\bar\beta\ge \beta_n\ge \underline\beta$.  

Fix some $\delta>0$ and let the number $S=S(\delta, \underline \alpha, \underline \beta, \bar \alpha, \bar \beta)$ be as in Lemma~\ref{lem:stepsconsta}\,(a)   for $\underline \alpha$, $\underline \beta, \bar\alpha, \bar\beta$ defined as in \eqref{def:lowboundglstab}.
By Lemma~\ref{lem:topor}, there is a random moment $\mathcal N=\mathcal N(\delta)$ such that, for each $i=0, 1, \dots, S$,
    \begin{equation*}
    (\xi_{\mathcal N+i}, \chi_{\mathcal N+i})\in \left(1-\frac \varepsilon{2\ell}, 1\right]\times \left(1-\frac \varepsilon{2\bar \ell}, 1\right].
     \end{equation*}	
 Fix  some $j\in \mathbb N$, set 
\[
\Omega_j:=\{\omega\in \Omega: \mathcal  N=j \}=\left\{\omega\in \Omega: (\xi_{j+i}, \chi_{j+i})\in \left(1-\frac \varepsilon{2\ell}, 1\right]\times \left(1-\frac \varepsilon{2\bar \ell}, 1\right],\, i=0, \dots, S \right\}.
\]
Note that $\Omega_j$ is defined  by $ (\xi_{j}, \chi_{j}), (\xi_{j+1}\chi_{j+1}),  \dots,  (\xi_{j+S}, \chi_{j+S} )  $ and  $\displaystyle \cup_{j=0}^S\Omega_j=\Omega$.

Assume first that $(x_0, y_0)\in \mathcal D_{\tilde \alpha, \tilde \beta}(h)$, where $h$ is chosen as in Lemma~\ref{lem:Tninvar},  and let $(z, u)$ be a solution to  the system 
\begin{equation}
 \label{eq:RickPBCvarzu0}
 \left\{
 \begin{array}{ll}
z_{n+1}=&z_n\left [(1-\alpha_{n+1})\exp\{r-z_n-au_n\}+\alpha_{n+1}\right],\\
u_{n+1}=&z_n\left[(1-\beta_{n+1})\exp\{s-bz_n-u_n\}+\beta_{n+1}\right], \,\, n\in\mathbb N,
\end{array}\right.
 \end{equation}
where $\alpha_n=\alpha+\ell \xi_{j+n}$,  $\beta_n=\beta+\bar \ell  \xi_{j+n}$ are  considered path-wise  on $\Omega_j$. 
Since for $\underline \alpha, \underline \beta$  defined as in \eqref{def:lowboundglstab} and for all $n\le S$,
 we have  $\alpha_n\ge \underline \alpha>\rho_1$, $\beta_n\ge \underline \beta>\rho_2$, by Lemma~\ref{lem:stepsconsta}~(a), there exists $n_j=n_j(\omega)\le S$, $\omega \in \Omega_j$ s.t $(x_{n_j}, y_{n_j})\in B(K,\delta)$.

If, however, $(x_0, y_0)\notin \mathcal D_{\tilde \alpha, \tilde \beta}(h)$, by Lemma~\ref{lem:Tninvar}~(c), there is a non-random $\tilde S$ such that $(x_n, y_n)\in \mathcal D_{\tilde \alpha, \tilde \beta}(h)$ for all $n\ge \tilde S$. Recall that $\tilde S$ is defined only by $(x_0, y_0)$ and the lower and the upper bounds of the variable controls, which in this case are $\underline \alpha$, $\underline \beta, \bar\alpha, \bar\beta$, so it is the same for all $\omega\in \Omega$.  Therefore we start our consideration from the moment $\tilde S$, i.e. now the initial value for \eqref{eq:RickPBCvarzu0} is $(x_{\tilde S}, y_{\tilde S})$.

Based on the above, we conclude that for each $\omega\in \Omega$, there is a random moment $n=n(\omega)\le S$ such that $(x_{n(\omega)}, y_{n(\omega)})\in B(\delta, K)$.
 Let $\tau$ be the moment when the solution $(x_n, y_n)$ reaches the ball $ B(\delta, K)$  for the first time:
 \begin{equation}
 \label{def:tau1}
\tau=\tau(\omega):=\min\{n\le N(\delta): (x_{n(\omega)}, y_{n(\omega)})\in B(\delta, K)\}, \quad  \omega\in \Omega.
\end{equation}
So   $\tau\in \mathbb N$ is bounded. Also, the random variables $\xi_{k+i}, \chi_{k+i}$, $i\in \mathbb N$, are independent from the event  $\{\tau=k\}$, for each $k\in \mathbb N_0$.

\bigskip

{\bf (b)} Assume that there is a set $\Omega^*$, ${\mathbb P} \Omega^*>\varepsilon_0>0$ such that$\lim\limits_{n\to \infty}(x_n, y_n)\neq K$  and choose $\gamma<\varepsilon_0/2$.  By Theorem~\ref{thm:locKolm} with $\tau$ defined by \eqref{def:tau1}, there is $\Omega_\gamma$ , ${\mathbb P} (\Omega_\gamma)>1-\gamma>1-\varepsilon_0/2$ and $\delta$, defined by the last line in \eqref{def:ethamu},  such that  $\lim\limits_{n\to\infty}(x_n, y_n)=K$ on $\Omega_\gamma$.

Since $\Omega^*\subseteq \Omega\setminus \Omega_\gamma$ and $\mathbb P\left[  \Omega\setminus \Omega_\gamma\right] =1-\mathbb P\Omega_\gamma\le 1-(1-\gamma)=\gamma$, we arrive at the contradiction:
\[
\varepsilon_0 <{\mathbb P} \Omega^*\le {\mathbb P} \left[ \Omega\setminus \Omega_\gamma  \right]\le \gamma\le \varepsilon_0/2.
\]
\end{proof}
While in Part (a) of Theorem~\ref{thm:globstabst0}  and also in the following corollary, we justify local stabilization, numerical examples demonstrate that in fact we get global, not local asymptotic stability.

\begin{corollary}
Let $(\tilde x_n, \tilde y_n)$ be a solution to system \eqref{eq:RickPBCvar} with  variable controls $\tilde \alpha_n$, $\tilde \beta_n$ satisfying $\tilde \alpha_n \in [\alpha-\ell,\alpha+\ell]$, $\tilde \beta_n \in [\beta-\bar \ell,\beta+\bar\ell]$, where $\alpha, \beta, \ell, \bar \ell$ are from  \eqref{cond:ael0}. If there exists $\delta_0>0$ such that, as soon as $(\tilde x_0, \tilde y_0) \in B(K, \delta_0)$, we get  $\lim\limits_{n\to \infty}(\tilde x_n, \tilde y_n)=K$, we have $\lim\limits_{n\to \infty}(x_n, y_n)=K$ for any initial values $x_0, y_0>0$, a.s.
\end{corollary}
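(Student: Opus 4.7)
The plan is to reduce the corollary directly to Theorem~\ref{thm:globstabst0}\,(a) combined with the hypothesized deterministic local-attraction property. The key observation is that once the stochastic trajectory enters $B(K,\delta_0)$, the tail of the trajectory can be viewed pathwise as a solution of the variable-control deterministic system \eqref{eq:RickPBCvar} with controls drawn from exactly the interval allowed in the hypothesis.

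First, I would apply Theorem~\ref{thm:globstabst0}\,(a) with the specific choice $\delta=\delta_0$. This yields a bounded random time $\tau=\tau(\delta_0)$, measurable with respect to $\mathcal{F}_\tau$, such that $(x_\tau,y_\tau)\in B(K,\delta_0)$ a.s.\ on $\Omega$, regardless of the initial value $(x_0,y_0)\in\mathbb{R}^2_+$. Note that this step only uses the lower-bound part of \eqref{cond:ael0} through Lemma~\ref{lem:stepsconsta} and Lemma~\ref{lem:topor}, and does \emph{not} require Assumption~\ref{as:Kolmloc}, which is what makes the corollary distinct from part (b) of the theorem.

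Second, I would examine the trajectory for $n\ge\tau$. By Assumption~\ref{as:noise1} we have $|\xi_n|\le1$ and $|\chi_n|\le1$ a.s., so for every $n$, the realized controls satisfy
\[
\alpha_{n+1}=\alpha+\ell\xi_{n+1}\in[\alpha-\ell,\alpha+\ell],\qquad \beta_{n+1}=\beta+\bar\ell\chi_{n+1}\in[\beta-\bar\ell,\beta+\bar\ell].
\]
Define $\tilde\alpha_n:=\alpha_{\tau+n}$, $\tilde\beta_n:=\beta_{\tau+n}$, and $(\tilde x_n,\tilde y_n):=(x_{\tau+n},y_{\tau+n})$. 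Then pathwise on each $\omega\in\Omega$, the shifted sequence $(\tilde x_n,\tilde y_n)$ is a solution of the deterministic variable-control system \eqref{eq:RickPBCvar} with controls $(\tilde\alpha_n,\tilde\beta_n)$ in the stated intervals and with initial condition $(\tilde x_0,\tilde y_0)=(x_\tau,y_\tau)\in B(K,\delta_0)$.

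Finally, the hypothesis of the corollary applies verbatim to this shifted sequence: for any variable controls in $[\alpha-\ell,\alpha+\ell]\times[\beta-\bar\ell,\beta+\bar\ell]$ and any initial condition in $B(K,\delta_0)$, the solution of \eqref{eq:RickPBCvar} converges to $K$. Hence $\lim_{n\to\infty}(\tilde x_n,\tilde y_n)=K$, which, since $\tau$ is finite a.s., is equivalent to $\lim_{n\to\infty}(x_n,y_n)=K$ a.s. There is no real obstacle here; the only subtlety is the pathwise shift argument, which is justified because $\tau$ is an $\mathcal{F}_\tau$-measurable finite time and the update law at each step is adapted.
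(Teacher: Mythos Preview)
Your proof is correct and follows essentially the same approach as the paper: invoke Theorem~\ref{thm:globstabst0}\,(a) with $\delta=\delta_0$ to reach $B(K,\delta_0)$ at a finite random time $\tau$, then treat the tail pathwise as a deterministic solution of \eqref{eq:RickPBCvar} with controls in $[\alpha-\ell,\alpha+\ell]\times[\beta-\bar\ell,\beta+\bar\ell]$ and apply the assumed local-attraction hypothesis. The paper phrases this as ``the proof mostly repeats Part\,(a), with system \eqref{eq:RickPBCvarzu0} now considered path-wise on all of $\Omega$,'' which is exactly your shift argument.
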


\begin{proof}
The proof of the corollary  mostly repeats the proof of Part {\bf (a)} of Theorem~\ref{thm:globstabst0}.
 The only difference is that system \eqref{eq:RickPBCvarzu0}  now is considered path-wise on all $\Omega$.  The result follows since, by Part\,{\bf (a)}, for each $\omega\in \Omega$ 
we get $(x_\tau, y_\tau)\in B(\delta_0, K)$, while controls satisfy $\alpha(\omega)\in [\alpha-\ell, \alpha+\ell]$ and  $\beta(\omega)\in [\beta-\bar \ell, \beta+\bar\ell]$.
\end{proof}

\bigskip

\subsection{The case of equal controls}
\label{sec:ab}

In this section,  we consider a simpler case when
the same control intensity is applied to each equation in \eqref{eq:Rick}, i.e. $\alpha_n=\alpha+\ell \xi_{n+1}=\beta_n$. 
In this case, the control matrix $\Upsilon$ defined in \eqref{def:Ups} becomes a scalar matrix $\Upsilon = \kappa I$.
This particular type of deterimniistic PBC was investigated in \cite{LP2014}.

In \cite{pbclocsys}, adaptive control values along different eigenspaces were designed applying  an invertible matrix $P$ to diagonalize $J_{0,0}$.
In the case of a scalar matrix, the transformed control matrix is the same as the original one $\tilde \Upsilon=P\Upsilon P^{-1} = \Upsilon = \kappa I$.
The choice of $\alpha_n=\beta_n$ can significantly simplify  calculations, but it is not optimal in the sense that we might apply stronger control intensities than  necessary for stabilization to one of the variables $x, y$. In particular, we have to choose $ \kappa = \max\{\alpha, \beta\}$.

The eigenvalues of  the Jacobian $J_{0, 0}$ without control, see \eqref{Jacobian_control}, are
\[
\lambda_{\max} =\frac {2-p-q+\sqrt{(p-q)^2+4abpq}}{2}, \quad \lambda_{\min}=\frac {2-p-q-\sqrt{(p-q)^2+4abpq}}{2} \, .
\]
Since $(p-q)^2+4abpq\le (p+q)^2-4pq(1-ab)< (p+q)^2$, we have $\lambda_{\min}<\lambda_{\max}<1+(p+q)/2-(p+q)/2=1$. 
We need to stabilize, once $\lambda_{\min} < -1$.

Applying \cite[Section 4.2, formula (4.2)]{pbclocsys}, we get the deterministic control parameter ${\mathcal A}$  for which $K$ is locally asymptotically stable:
\begin{equation}
\label{def:mathcalA}
\alpha>{\mathcal A}  :=\frac{-1-\lambda_{\min}}{1-\lambda_{\min}}
=1-\frac{4}{p+q +\sqrt{(p-q)^2+4abpq}} \, .
\end{equation}
We can also use Section~\ref{subsec:constcongrol} to calculate  ${\mathcal A}$:
the value of ${\mathcal A}$ can be also found as corresponding to the point of intersection of the lines $y=\frac {qx}p$ and $y=g(x)$, where $g(x) = (2-x)/(1-0.5(1-ab)x)$ is defined in Section~\ref{subsec:locstPBCpar} in the Appendix.
As expected, we get the same ${\mathcal A}$ as the minimum control intensity for local stability.
However, using the maximum norm and applying \eqref{eq:choice_q_1}, we get a bound 
\begin{equation}
\label{def:mathcalB}
{\mathcal B} :=    1- \min \left\{  \frac{2}{p(1+a)}, \frac{2}{q(1+b)}  \right\} \, .
\end{equation}
Note that in the  case $a=b$ we get $\mathcal A<\mathcal B$,  but if $a\neq b$, the relation  $\mathcal A>\mathcal B$ is possible (for more details see Section~\ref{subsec:mathcalAB} in the Appendix).

In the stochastic case, the general method of local stabilization introduced in \cite{pbclocsys}  is quite convenient, since it simplifies the calculations for the parameters of the stochastic control, which can improve the average values of one of the parameters, let it be $\alpha$.  For the stochastically perturbed parameter $\alpha+\ell \xi$, we need to have 
 \cite[Section 4.2, formula (4.3)]{pbclocsys}
\begin{equation}
\label{expect_old}
\mathbb E\ln |\alpha+\ell \xi+(1-\alpha-\ell \xi)\lambda_{\min} |<0 ~~ \Leftrightarrow ~~\mathbb E\ln |\alpha+(1-\alpha)\lambda_{\min}+\ell \xi(1-\lambda_{\min})|<0.
\end{equation}
However application of the method from \cite{pbclocsys}, when optimal controls are different,  might be not suitable  for global stability. 
Such situation is illustrated in Example \ref{equal_controls}, see Section~\ref{sec:ex}.


\section{Examples and Numerical Simulations}
\label{sec:ex}

Everywhere in examples and numerical simulations of \eqref{eq:RickPBCstoch} we assume that the sequences of random variables $(\xi_n)$,  $(\chi_n)$ 
satisfy  Assumption~\ref{as:noise1}, where each $\xi_n, \chi_n$ has a Bernoulli distribution (taking the values of $\pm 1$ with equal probability).

We recall that $r$ and $s$ characterise the reproduction rates of the first and the second species, respectively, while the coefficients $a$ and $b$ describe 
resource sharing: for example, if $a$ is close to one, the second species consumes as much of the resources of the first species as those of its own kind meaning 
that interspecific competition is nearly as strong as its intraspecific counterpart. If, on the other hand, $a$ and $b$ are small, this corresponds to quite weak   interspecific competition  favouring coexistence. The choice of the parameters was motivated by the following factors:
\begin{enumerate}
\item
both $r>2$ and $s>2$;
\item
$K$ is unstable, justifying control application, where the values of control are neither too small nor too large, which would undermine the stabilising effect of noise,
as the amplitude of the noise is less than both the control and its distance from one.
\end{enumerate}
To avoid the effect of non-symmetric and negligible/too strong resource sharing, we set $a=b=0.5$ in all the examples.
The values of $r,s$ were chosen in the segment $[2.5,4]$, leading to an attracting cycle. 
As we further illustrate, stabilising effect of noise is observed in all the examples.

\begin{example}
\label{ex:р3с2.5}
Let $r=3$, $s=2.5$, $a=b=0.5$, then $p=7/3$, $q=4/3$, and the positive equilibrium is $K=(p,q)$, see \eqref{def:pqK}. 
We start with the analysis of control intensities for local and global stabilization with deterministic control.

\bigskip

{\bf (1)  Deterministic control.} 
Let us compute   $\alpha_* \approx 0.142$ from  \eqref{cond:abcontr2} and fix $\alpha = 0.36 >0.142$.  Applying \eqref{cond:abcontr2}  again, we find $\beta_*=0.13636$. 
Then,  by Lemma~\ref{lem:locstPBCpar},  for any $\alpha \geq  0.36$ and  $\beta > 0.13636$, system \eqref{eq:RickPBC} has the locally stable equilibrium  $K$.
Compared to these values, using Sections \ref{sec:invar} and \ref{subsec:Lyapunov}, we get
\[
\tilde \alpha=0.44, \quad \rho_1=0.46, \quad \tilde \beta<0, \quad \rho_2=0.2249,
\]
so for $\alpha>0.46$ and $\beta>0.2249$ the equilibrium is globally stable. 

\bigskip

Next, let us proceed to stabilization by noise.
Here we choose smaller values of $\alpha,\beta$ where PBC control without noise leads to a stable two-cycle, not stabilization: a couple of $\alpha=0.34$, $\beta=0.1$ (see Fig.~\ref{figure_2}, left) and  $\alpha=0.3$, $\beta=0.15$ in  Fig.~\ref{figure_3}, left. For equal control values, we choose $\alpha=\beta=0.25$, also leading to a two-cycle,
see Fig.~\ref{figure_equal}, left.

{\bf (2) Local stabilization by noise}

We apply results for local stabilization, but later we will illustrate that numerical simulations demonstrate low dependency on initial conditions, leading us to the famous (and not yet proven even without noise) hypothesis on the equivalence of local and global stability conditions.
The attempt to identify discrete nonlinear maps (such as Ricker and logistic) for which local stability implies global stability (usually of the unique positive equilibrium) was 
partially successful with the classical paper of Singer \cite{Singer}. A notion of the Schwarzian derivative was introduced, and it was verified that for unimodal maps with a negative Schwarzian derivative local stability implies global attractivity. The scalar Ricker and some other population dynamics models also enjoy this property 
\cite{Liz2007}. However, as mentioned above, we are not aware of any significant progress for two-dimentional maps.
See Fig.~\ref{figure_local_global} for illustration of stabilization results.

{\bf (a)  Spectral norm.} 

We start with theoretical evaluation of the Jacobians with the spectral norm.
To stabilize  the point $(p,q)=(7/3,4/3)$, we choose a slightly higher control of the $x$-variable $\alpha=0.34$
than of the $y$-variable  one  $\beta =0.15$.   As
\[
J_{\alpha,\beta}  =  \left(\begin{array}{ll}
1-p_\alpha  &  -ap_\alpha \\ -bq_\beta& 1-q_\beta  \end{array}\right) \approx  \left(\begin{array}{ll} -0.54 & - 0.77 \\ -0.566667 & -0.133333 \end{array}\right),
\]
the Jacobian has  a smaller eigenvalue  $\lambda_1 \approx -1.02781<-1$, under such deterministic control, the equilibrium is unstable (there is a stable two-cycle).
We perturb  the controls $(\alpha, \beta)$ by  the noises $\xi_n, \chi_n$ with corresponding intensities 
$\ell=0.2$,  $\bar \ell=0.1$:
\[
\alpha_n=\alpha+\ell\xi_{n}=0.34+0.2 \xi_n \quad \beta_n=\beta+\bar\ell\chi_{n}=0.15+0.1 \chi_n.
\]
For the values of noise equal to $\pm 1$ and $ \pm 1$, respectively, for each of the controls, we get the Jacobians such that the largest eigenvalues of $J^T J$ are   
$2.43701$, $0.314298$, $2.15317$  and $0.592392$, respectively, thus the product of the norms (the square roots of the eigenvalues above) is 
$$
\|   J_{0.14,0.05}  \|_2    \| J_{0.54,0.25}  \|_2  \| J_{0.14,0.25}   \|_2    \| J_{0.54,0.05}   \|_2  \approx 0.988424 < 1.
$$ 
Thus,  
\[
\mathbb E\ln \left[\|  J_{\alpha_n,\beta_n}  \|_2 \right]<0,
\]
and we get stabilization.

In numerical simulations in Fig.~\ref{figure_2},  we see a noisy cycle for $\ell=0.08,\bar \ell=0.02$, a noisy equilibrium for $\ell=0.12,\bar \ell=0.05$,
and  stabilization for $\ell=0.12,\bar \ell=0.08$, though direct computation with the spectral norm for these levels of noise does not justify stabilization.
In the lower row of Fig.~\ref{figure_2}, solutions trajectories are illustrated in the $xy$-plane.

{\bf (b)  Maximum-norm}

In explicit stabilization results, we use the convenient maximum-norm, so let us evaluate the required noise for this norm.
We aim to stabilize by noise the point $K=(7/3,4/3)$ when $\alpha=0.36$, $\beta =0.132$. 
As
$\displaystyle J_{\alpha,\beta} $ 
has a smaller eigenvalue $\lambda \approx -1.0038$, with such deterministic control the equilibrium is unstable.

Let us perturb  the controls $(\alpha, \beta)$ by  the noise $\xi_n$ with corresponding intensities 
$\ell=0.211, \,  \bar \ell=0.118$:
~~$\displaystyle 
\alpha_n=\alpha+\ell\xi_{n}=0.36+0.211 \xi_n,  \quad \beta_n=\beta+\bar\ell\xi_{n}=0.132+0.118\xi_n$. 
For simplicity of computation, we assume the same Bernoulli noise in $\alpha$ and $\beta$ perturbations.
This still does not violate Assumption  1: the sequences $\xi, \chi$ can coincide, as long as their members are independent.
For the two Jacobian matrices corresponding to two different values of noises $\xi_{n}= \pm 1$ for both controls, the maximum norms of the Jacobian matrices are 
 $ \approx \max\{ 0.5015,0.5\},\max \{1.9785,0.972 \}$.  Hence 
\[
\mathbb E\ln \left[\|  J_{\alpha_n,\beta_n}  \|_{\infty}\right]
\leq \frac{1}{2} \left( \ln \max\{ 0.5015,0.5\} + \ln \max \{1.9785,0.972 \} \right) 
= \frac{1}{2} \ln 0.992218 <0.
\]
Therefore, by  Lemma~\ref{lem:Kolm}, there is  local asymptotic  stability of  $K$ with any prescribed probability, as formulated in Theorem~\ref{thm:locKolm}.

\begin{figure}[ht]
\centering
\vspace{-32mm}

\hspace{-10mm} \includegraphics[height=.265\textheight]{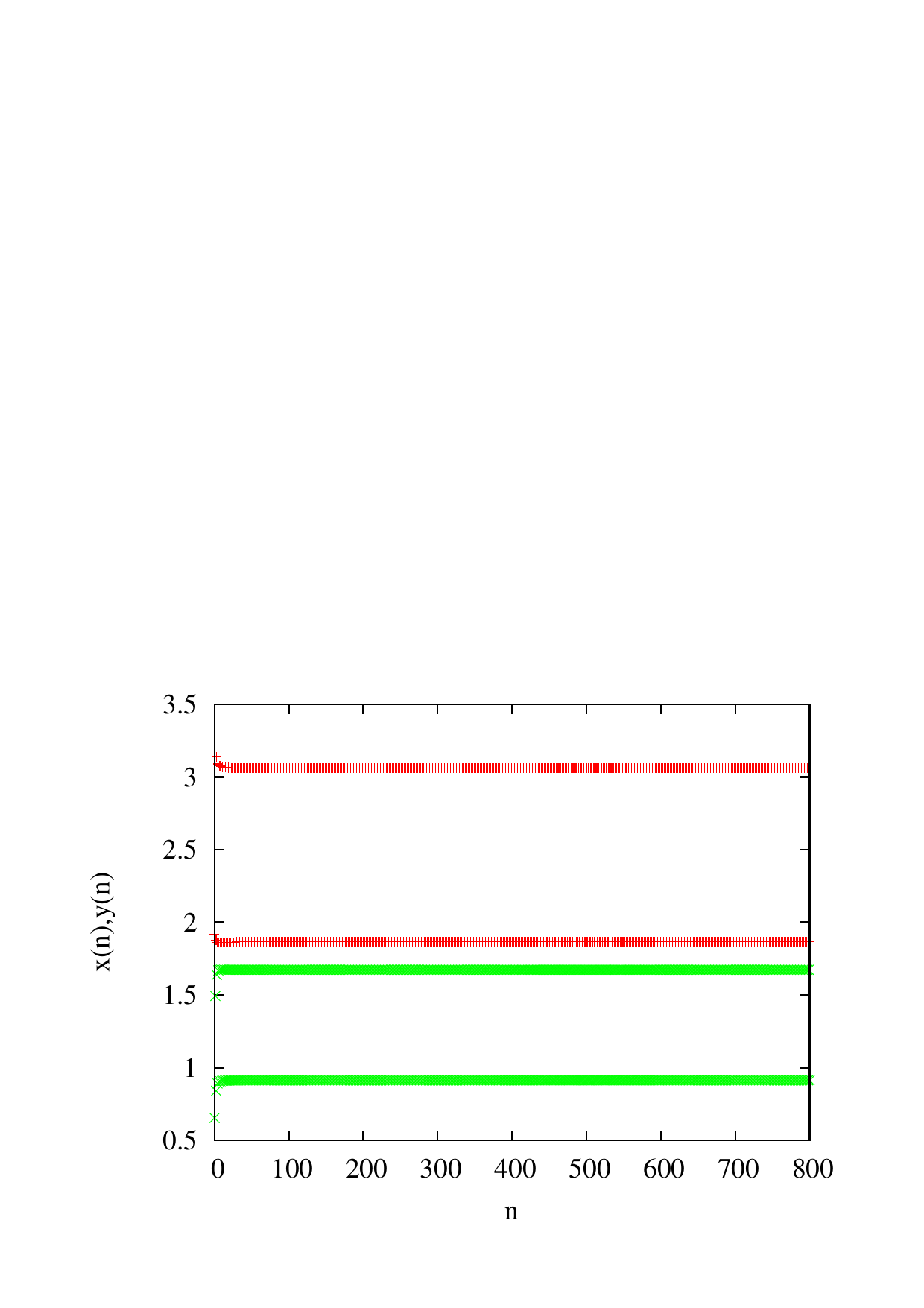}
\hspace{-10mm} \includegraphics[height=.265\textheight]{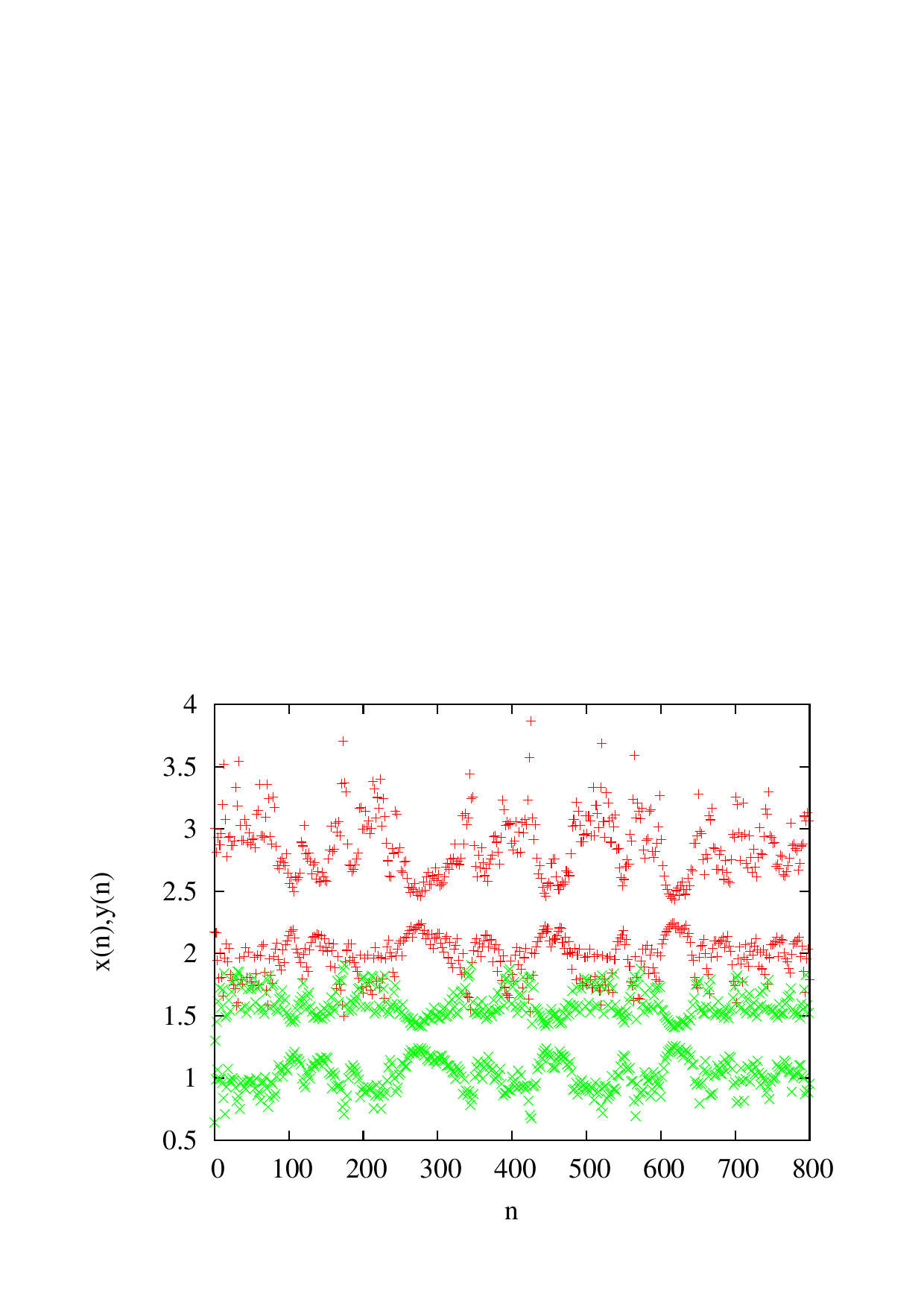}
\hspace{-10mm}\includegraphics[height=.265\textheight]{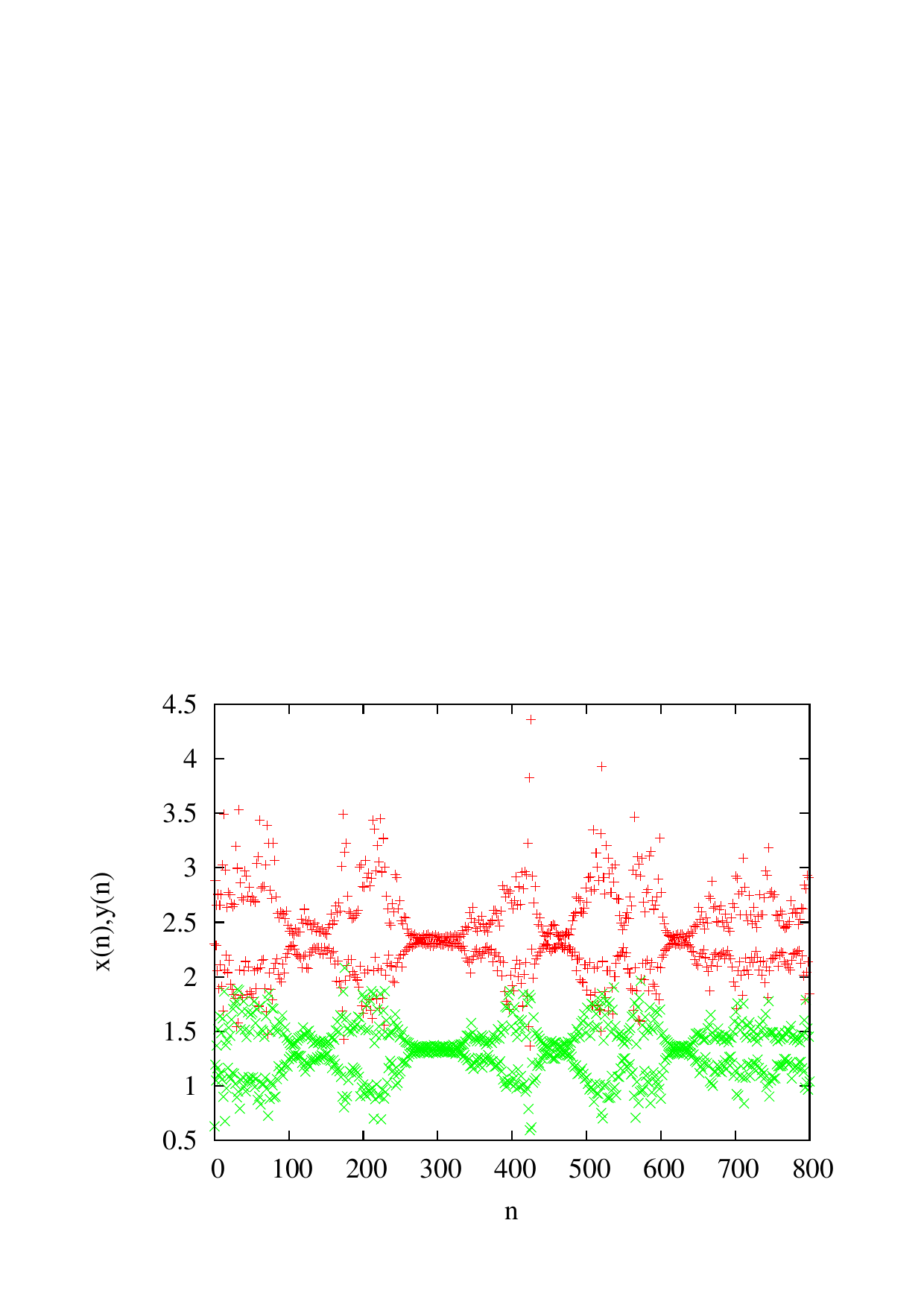} 
\hspace{-10mm}\includegraphics[height=.265\textheight]{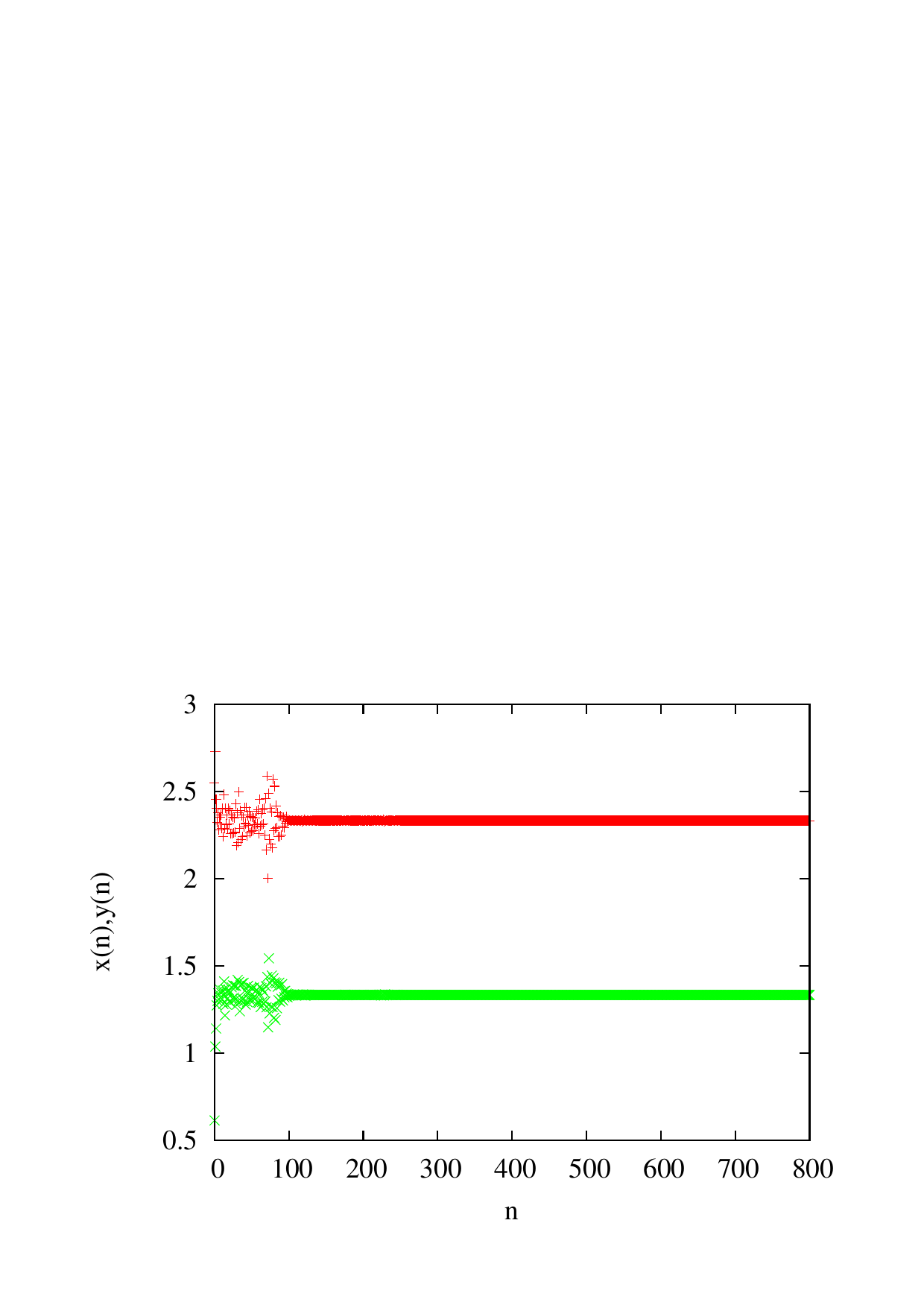} 
\vspace{-32mm}

\hspace{-10mm} \includegraphics[height=.265\textheight]{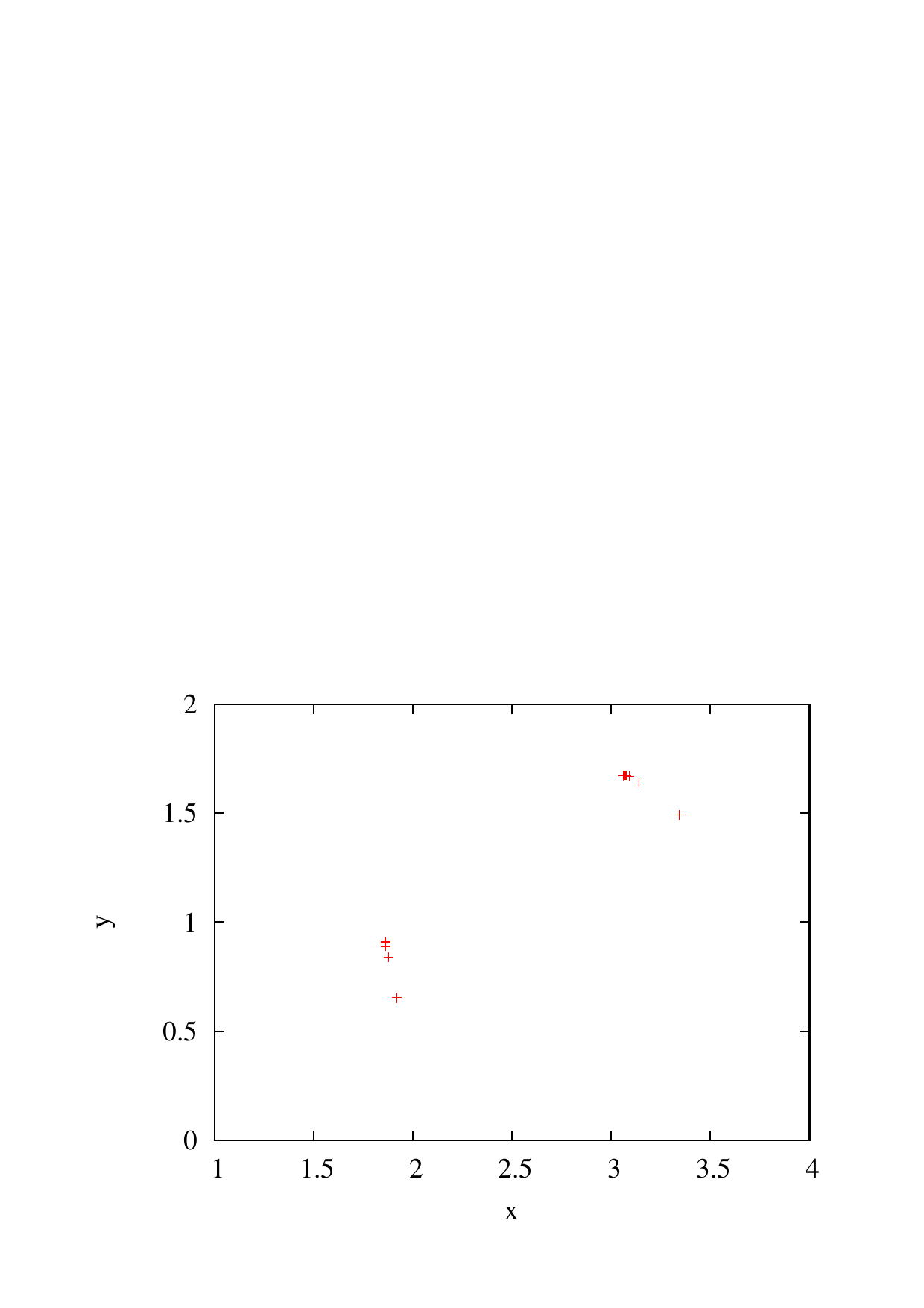}
\hspace{-10mm} \includegraphics[height=.265\textheight]{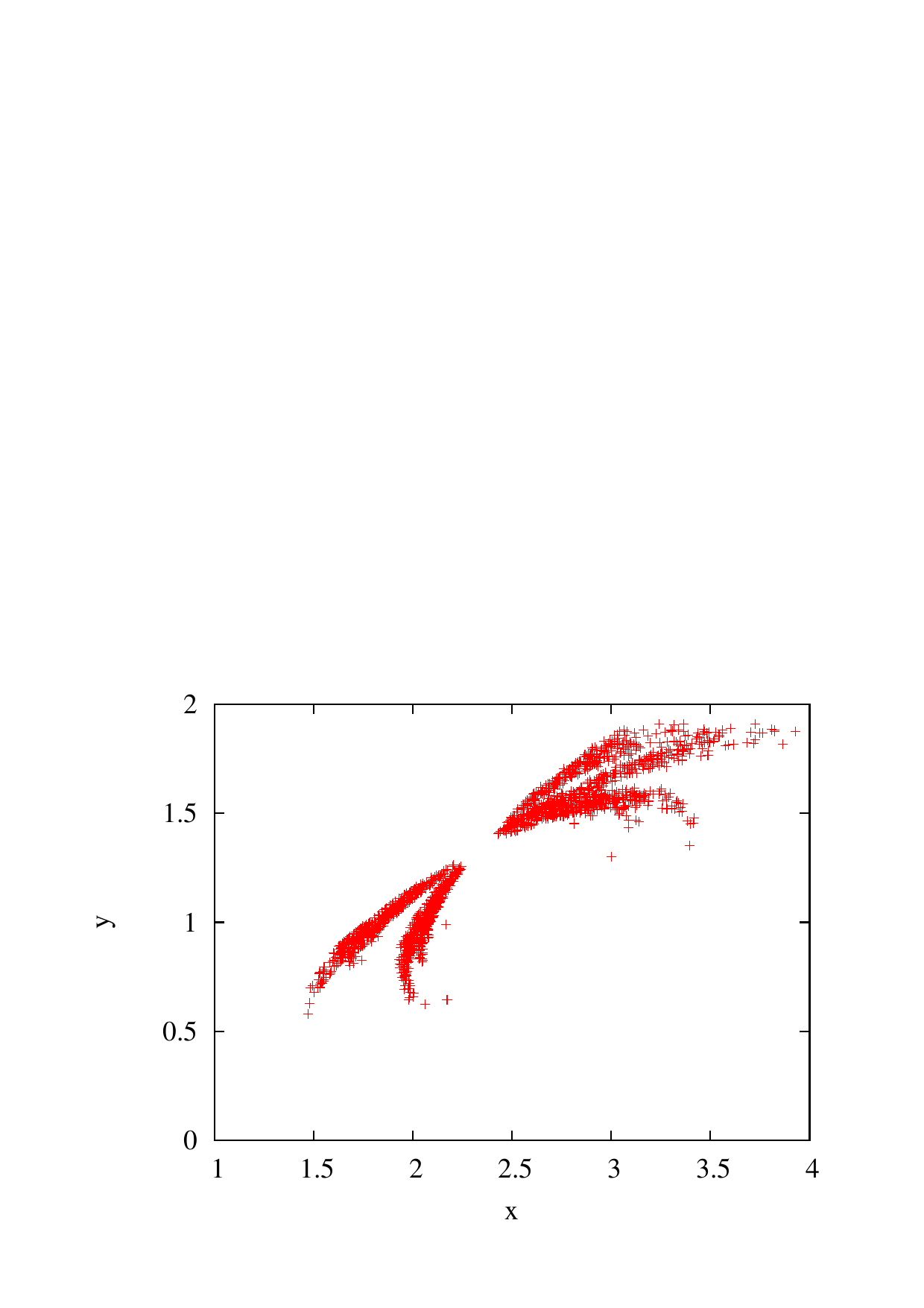}
\hspace{-10mm} \includegraphics[height=.265\textheight]{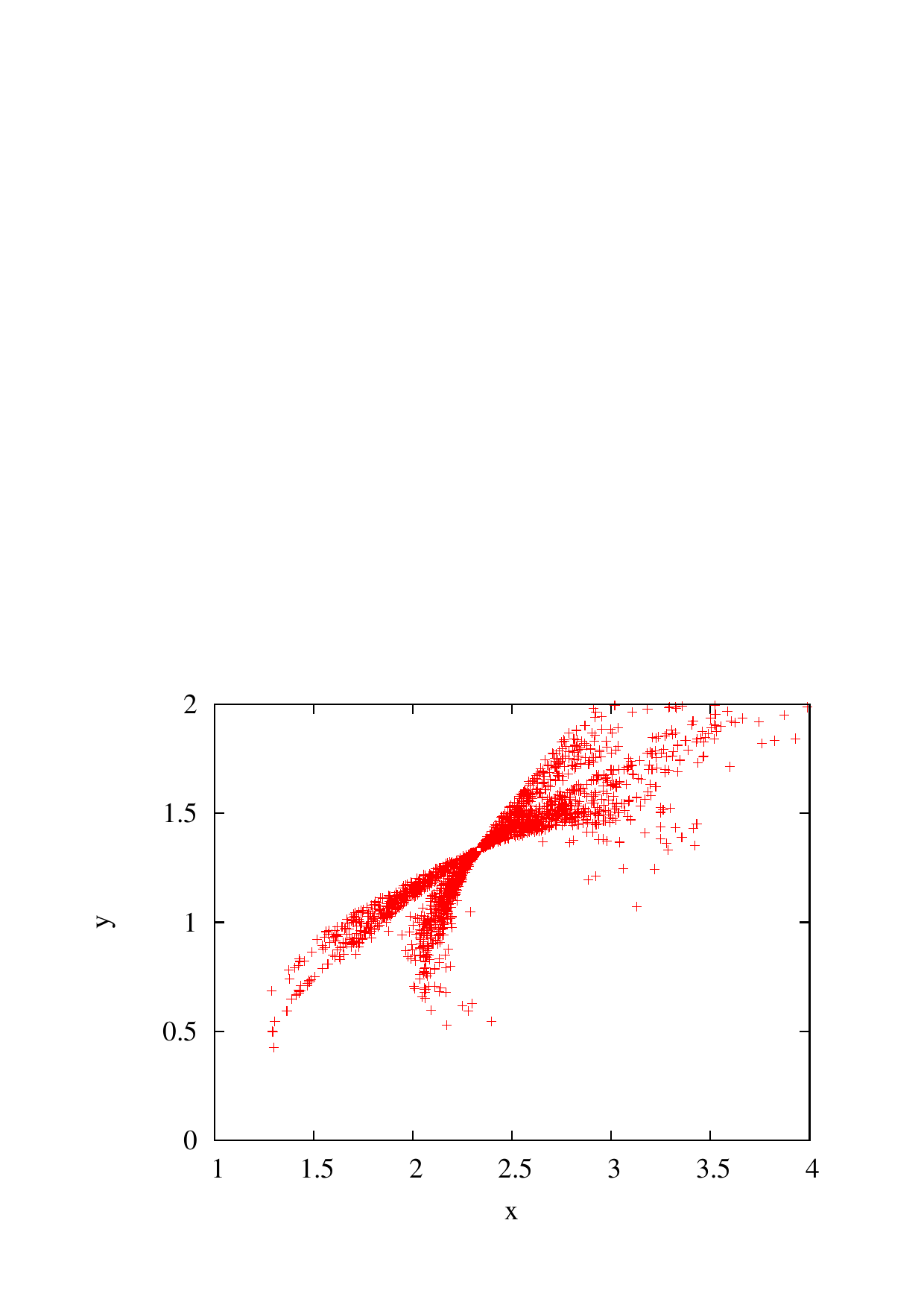}
\hspace{-10mm} \includegraphics[height=.265\textheight]{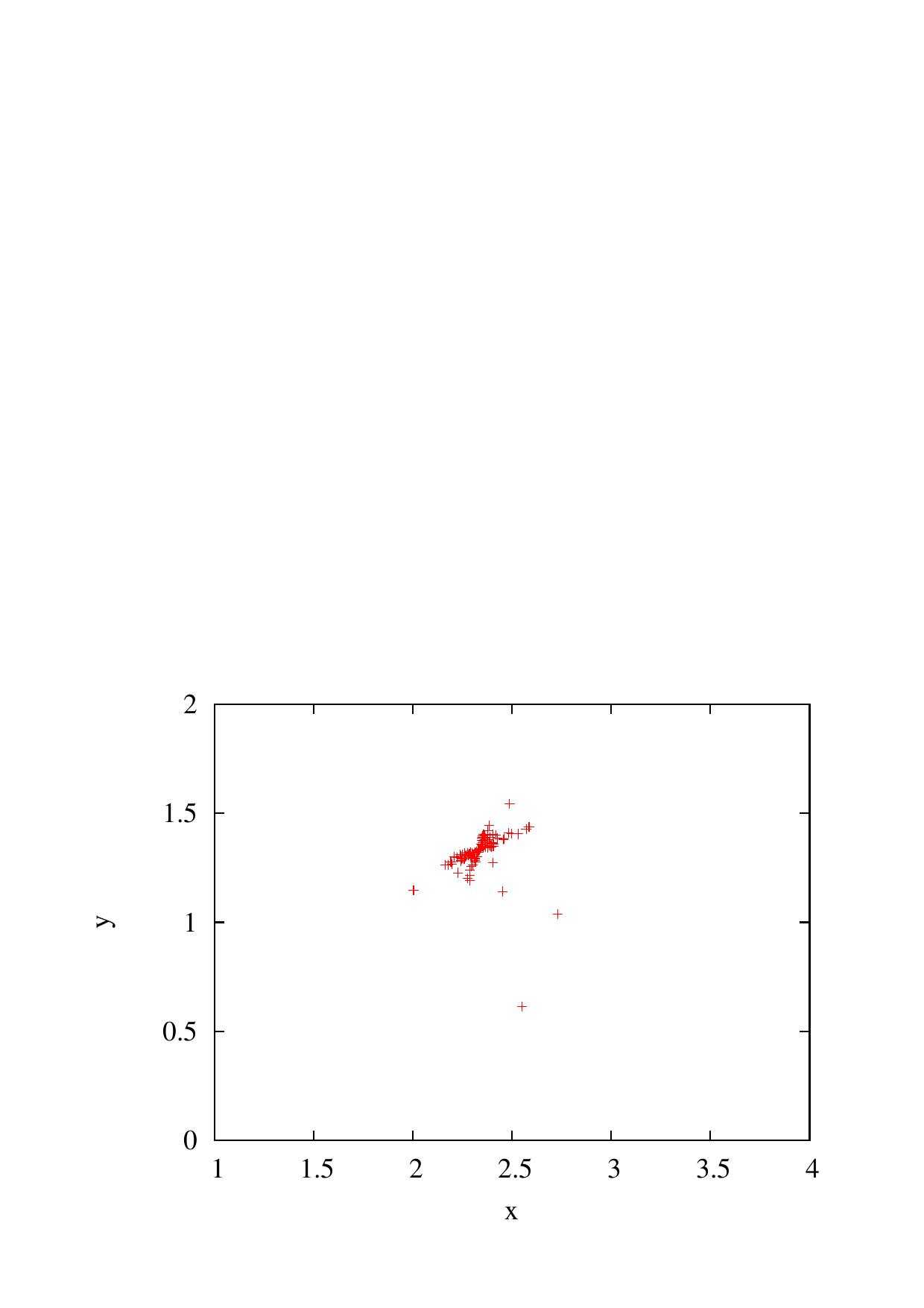}
\vspace{-6mm}

\caption{Runs for the Ricker map ($x$-coordinate in red and $y$ in green) for $r=3$, $s=2.5$, $a=b=0.5$,
$\alpha=0.34$, $\beta =0.1$  and (from left to right)  (1)  no noise;
(2) $\ell = 0.08$, $\bar{\ell} = 0.02$;
(3) $\ell = 0.12$, $\bar{\ell} = 0.05$;
(4) $\ell = 0.2$, $\bar{\ell} = 0.08$. The bottom row shows the corresponding points in $xy$-plane.
We observe (1) a stable two-cycle;
(2) a noisy two cycle; (3) a noisy equilibrium; (4) stabilization of the positive equilibrium $(7/3,4/3)$.
Everywhere the values of $x_0=4$, $y_0=1.1$ were taken to explore convergence.
 }
\label{figure_2}
\end{figure}

Next, for the same example $r=3$, $s=2.5$, $a=b=0.5$, let us take unequal, but closer values of $\alpha=0.3$ and $\beta =0.15$ with the lower level of $x$-control (compared to the couple of $(0.34,0.15)$ and $(0.36,0.132)$  for the computation in the spectral and the maximum norms, respectively) and  numerically evaluate the levels of noise necessary for stabilization.
In Fig.~\ref{figure_3}, we observe (left) a stable two-cycle without noise, (second) a noisy two-cycle for both $x$ and $y$ with smaller noise amplitudes
$\ell = 0.1$, $\bar{\ell} = 0.04$, noisy equilibrium trajectories  (third)  for  $\ell = 0.18$, $\bar{\ell} = 0.07$ and stabilization for 
  $\ell = 0.225$, $\bar{\ell} = 0.1$  (right).  The lower row of Fig.~\ref{figure_3} shows solutions in the $xy$-plane.
Note that in the numerical runs we took $x_0=4$, $y_0=1.1$ not quite close to the equilibrium both in  Fig.~\ref{figure_2} and Fig.~\ref{figure_3},
thus illustrating that in fact we get global stabilization.

\begin{figure}[ht]
\centering
\vspace{-32mm}

\hspace{-10mm} \includegraphics[height=.265\textheight]{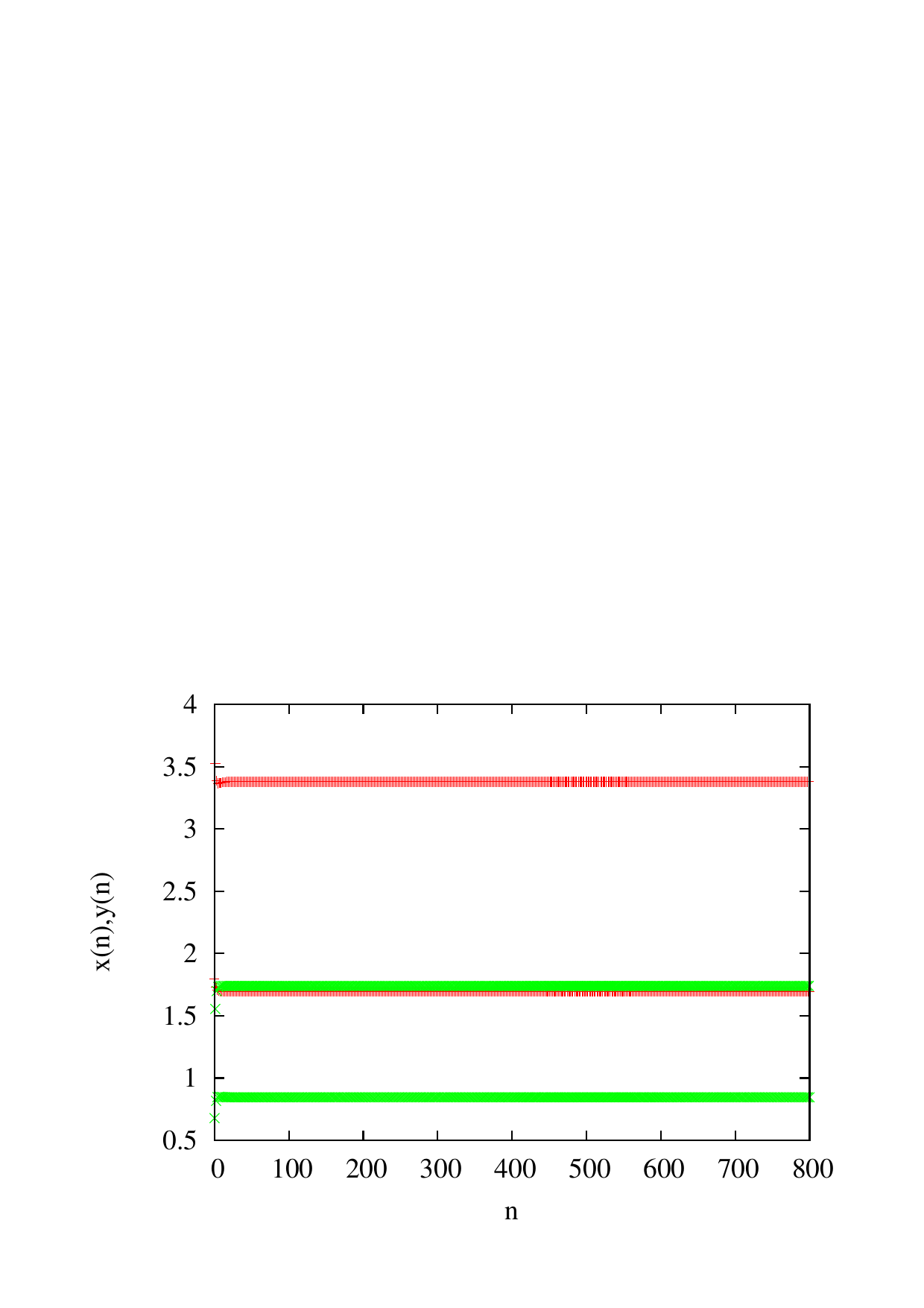}
\hspace{-10mm} \includegraphics[height=.265\textheight]{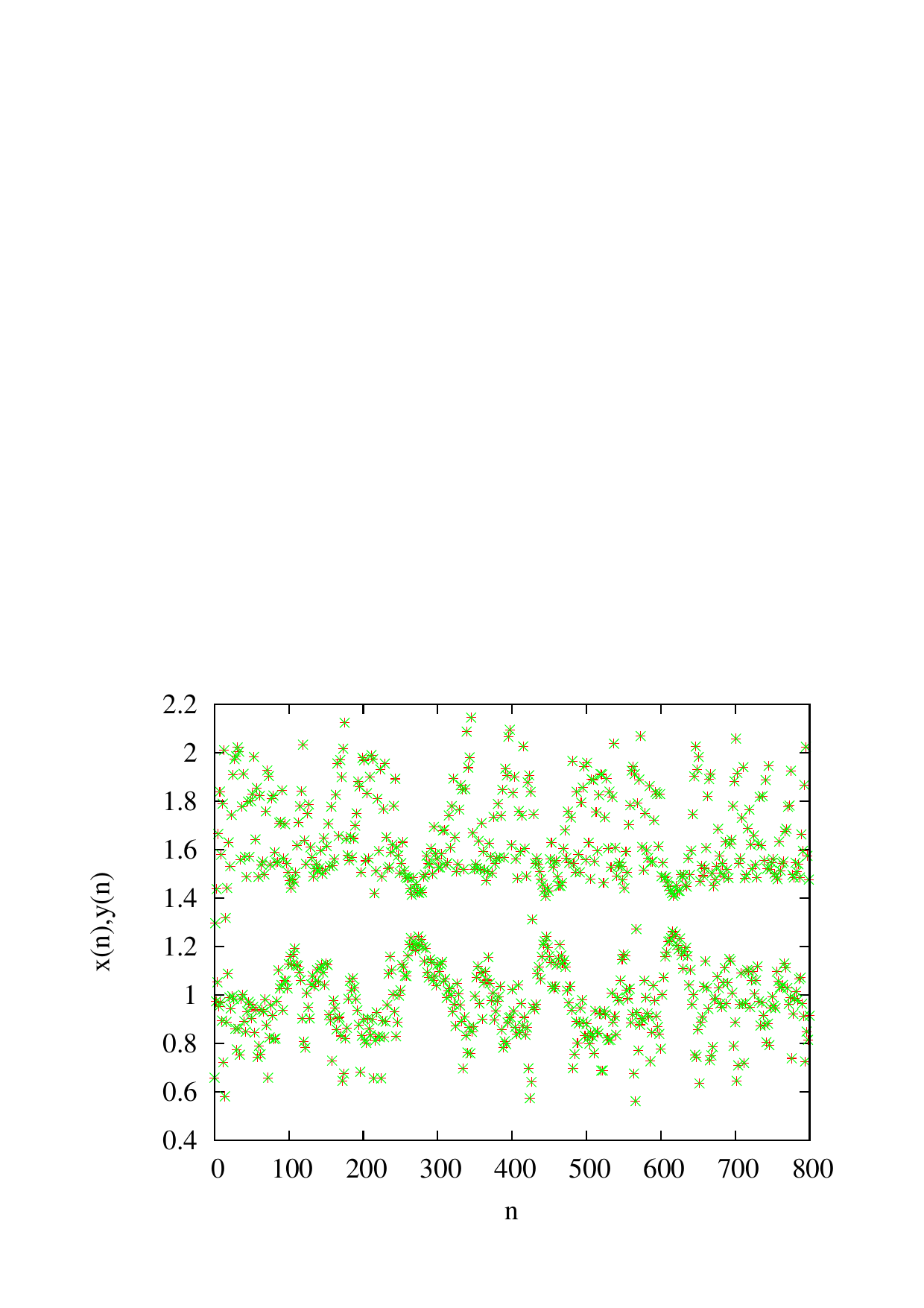}
\hspace{-10mm} \includegraphics[height=.265\textheight]{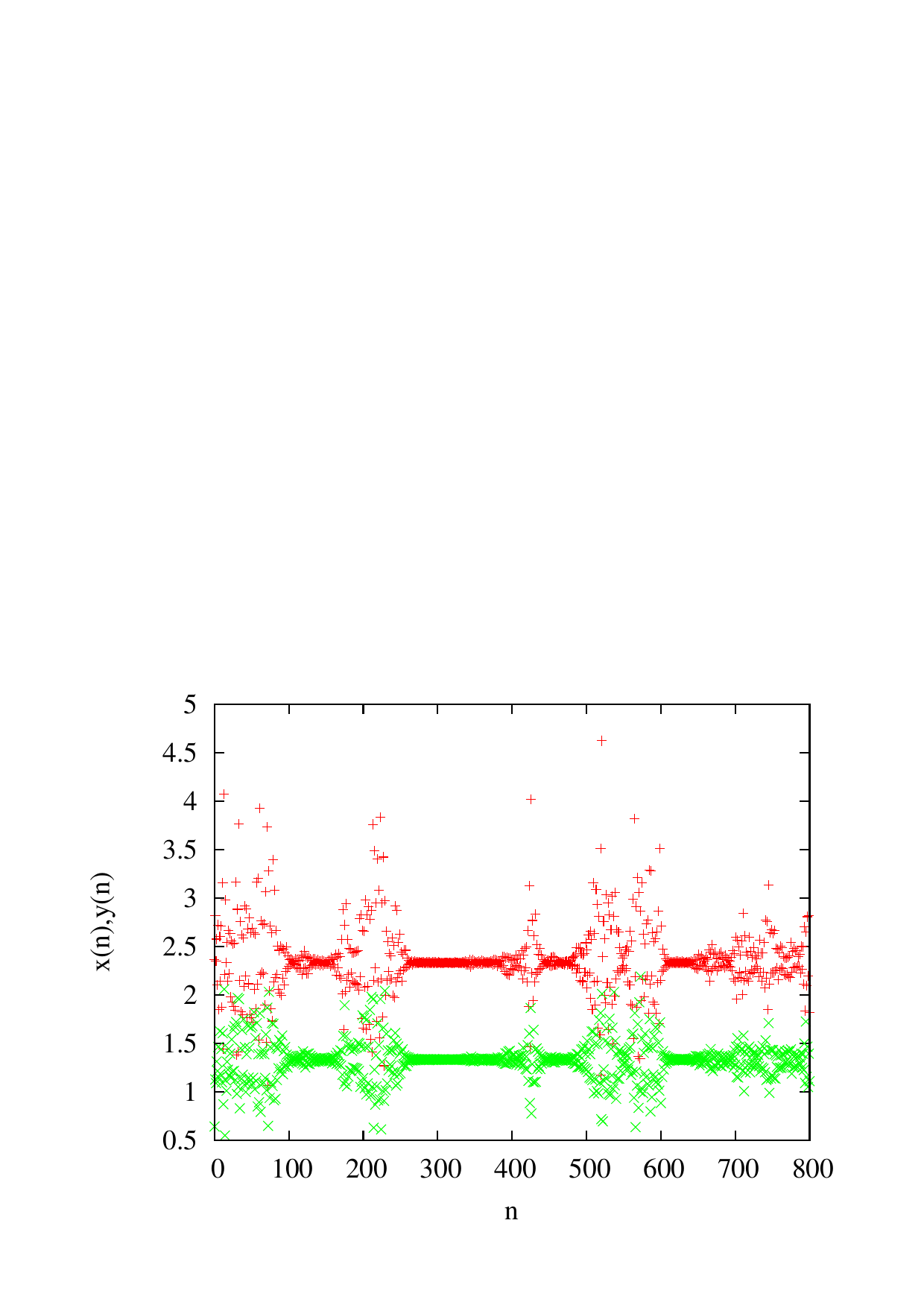}
\hspace{-10mm}  \includegraphics[height=.265\textheight]{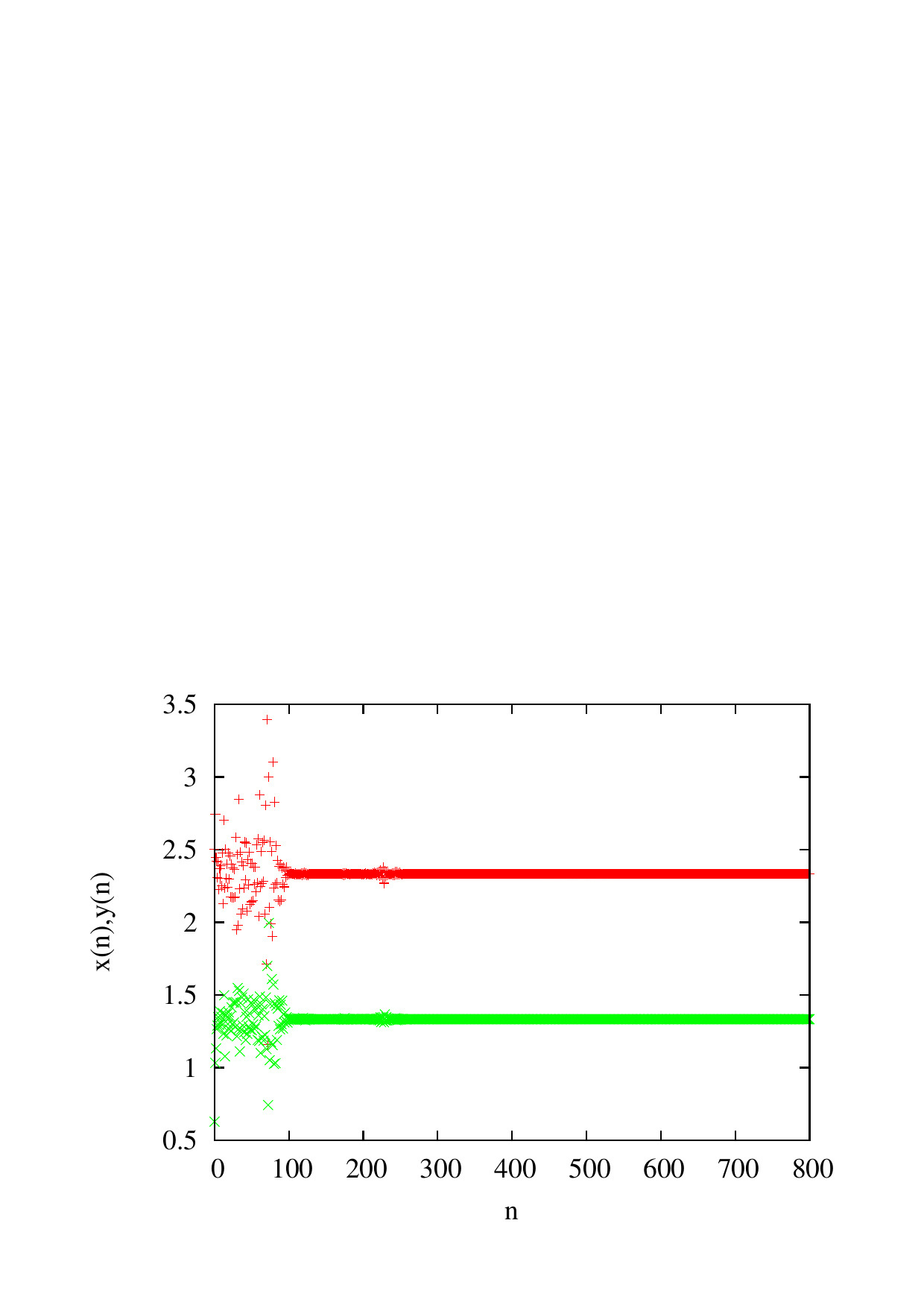} \vspace{-32mm} 

\hspace{-10mm} \includegraphics[height=.265\textheight]{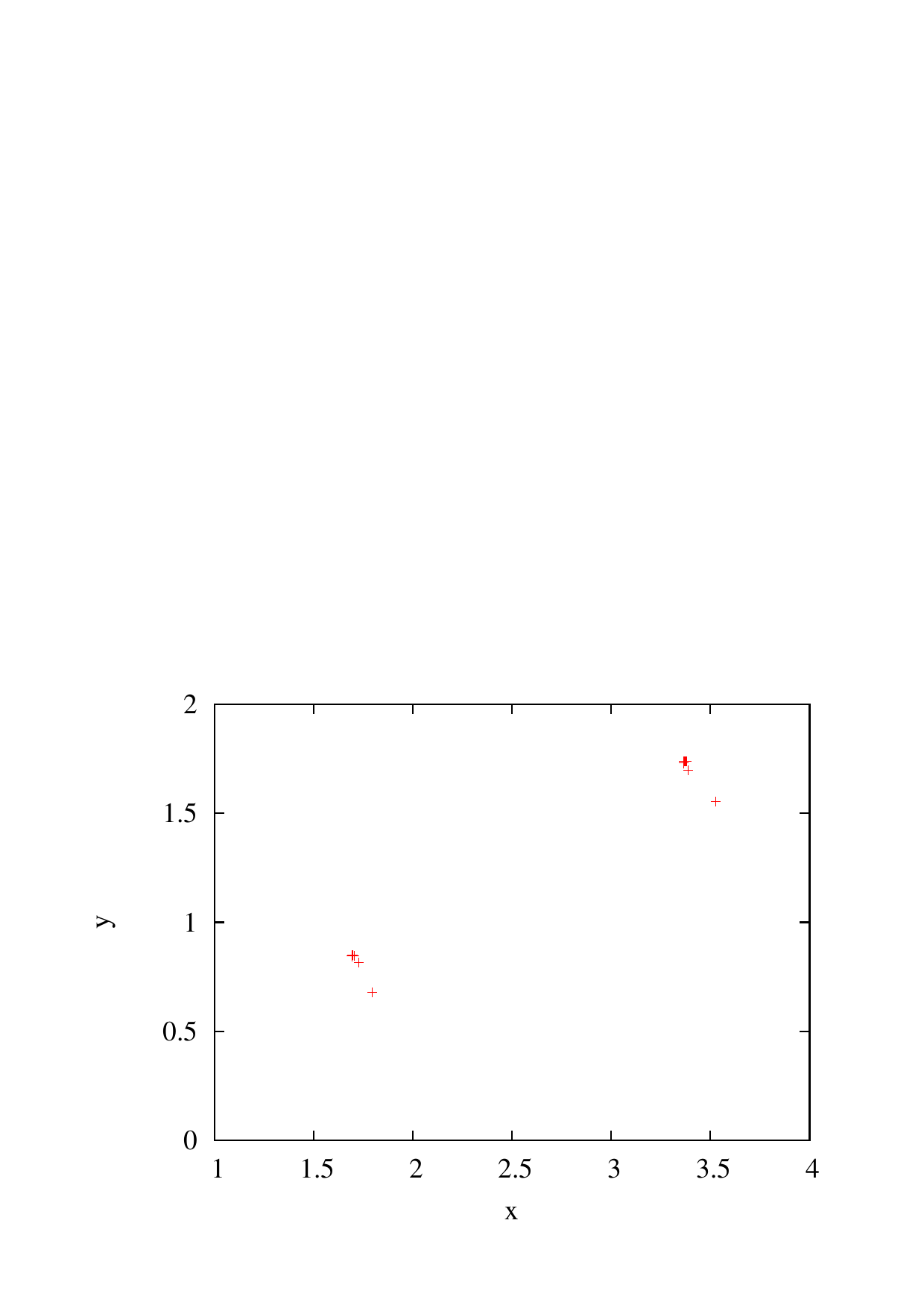}
\hspace{-10mm} \includegraphics[height=.265\textheight]{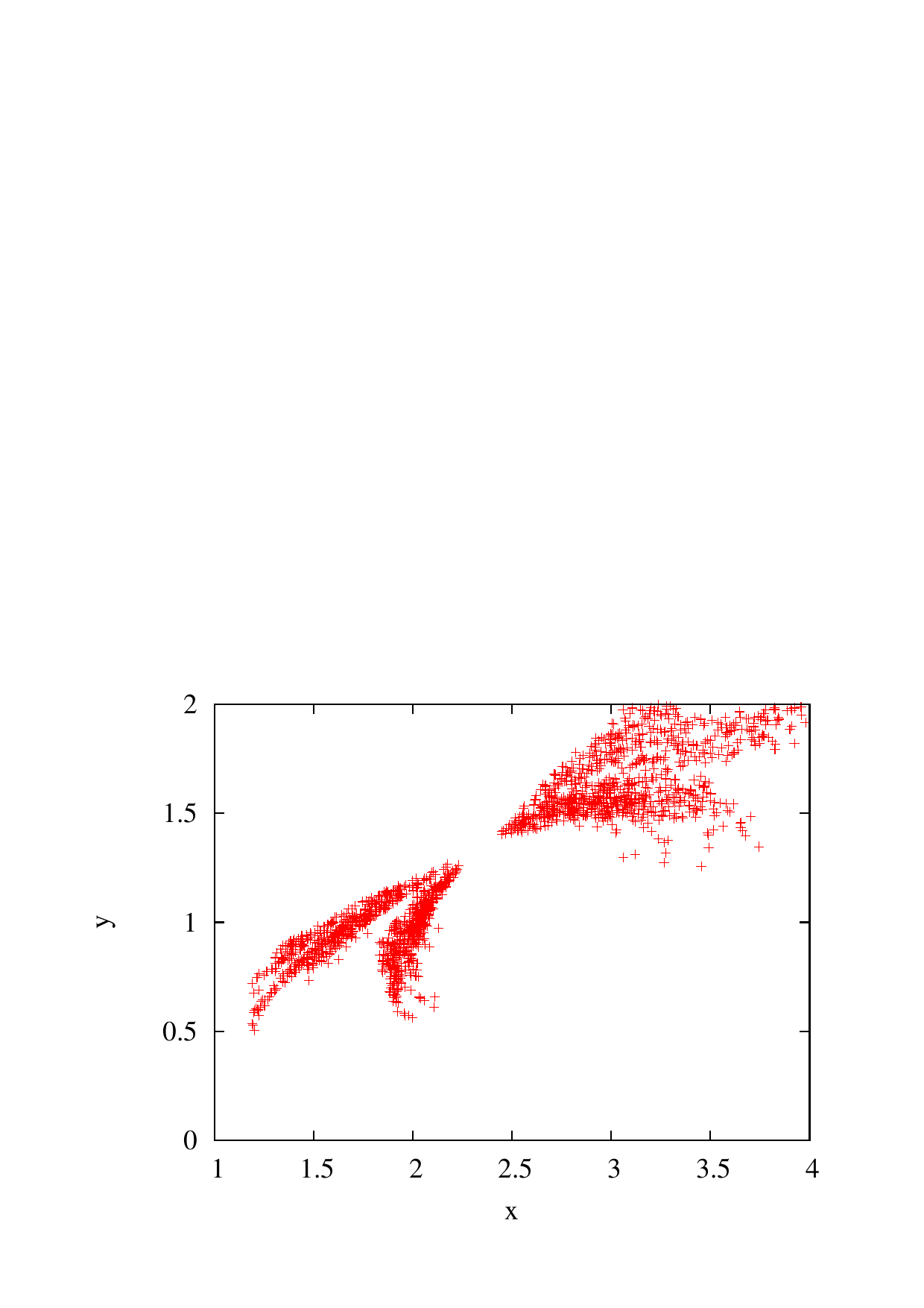}
\hspace{-10mm} \includegraphics[height=.265\textheight]{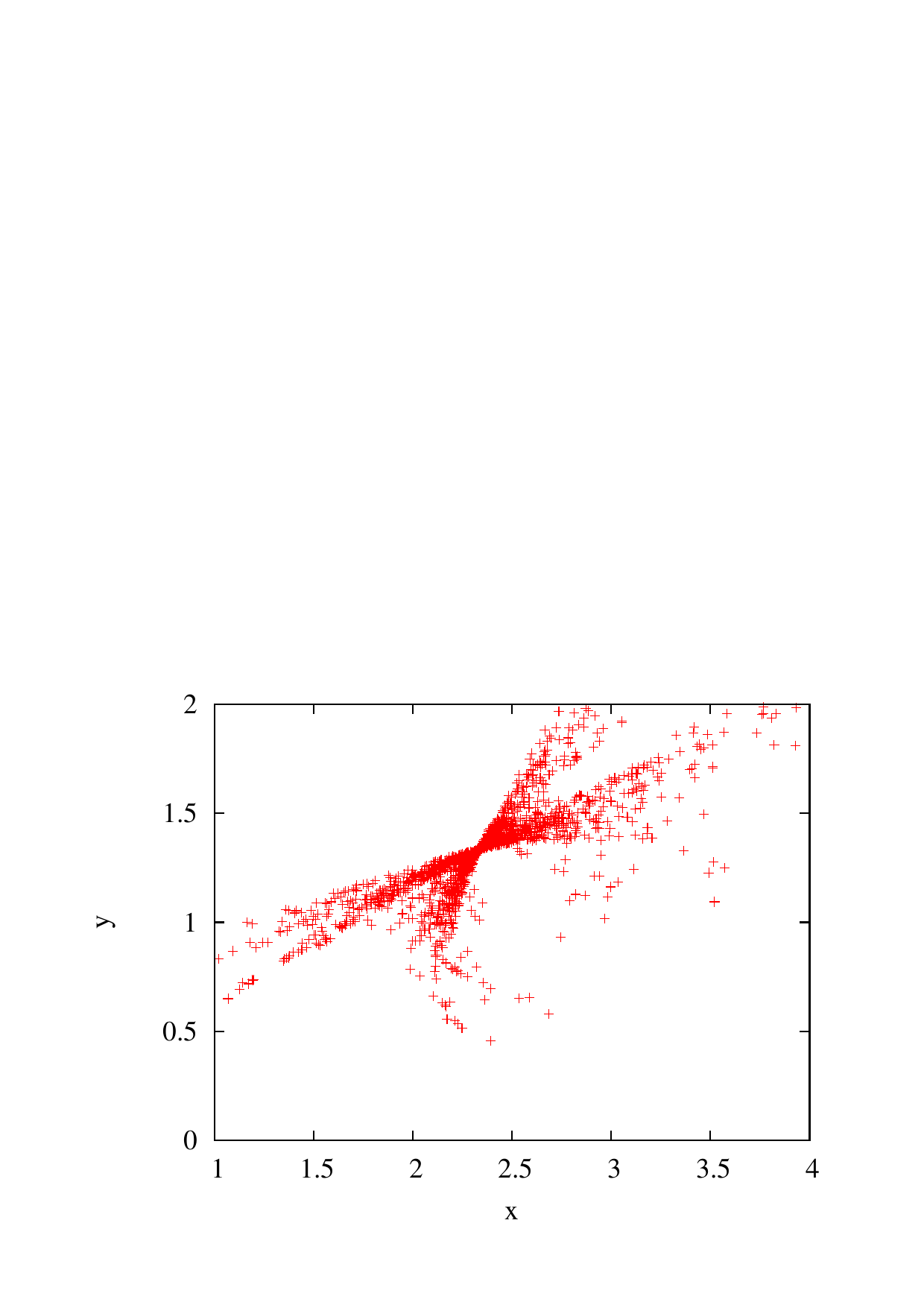}
\hspace{-10mm} \includegraphics[height=.265\textheight]{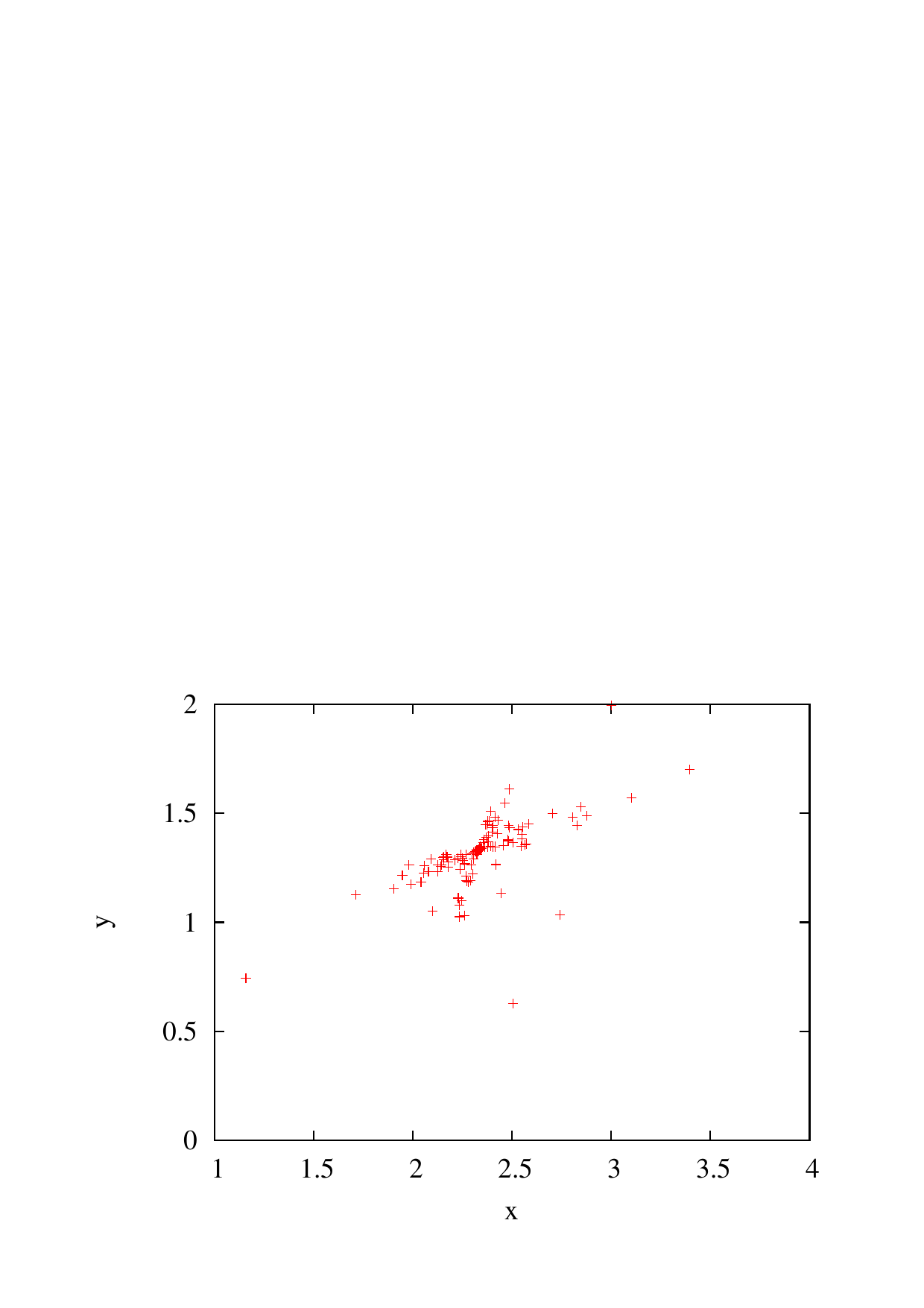}
\vspace{-6mm}

\caption{Runs for the Ricker map ($x$-coordinate in red and $y$ in green) for $r=3$, $s=2.5$, $a=b=0.5$,
$\alpha=0.3$, $\beta =0.15$  and (from left to right)  (1)  no noise;
(2) $\ell = 0.1$, $\bar{\ell} = 0.04$;
(3) $\ell = 0.18$, $\bar{\ell} = 0.07$;
(4) $\ell = 0.225$, $\bar{\ell} = 0.1$. The bottom row shows the corresponding points in $xy$-plane.
We observe (1) a stable two-cycle;
(2) a noisy two cycle; (3) a noisy equilibrium; (4) stabilization of the positive equilibrium $(7/3,4/3)$.
Everywhere the values of $x_0=4$, $y_0=1.1$ were taken.
 }
\label{figure_3}
\end{figure}
\end{example}

\bigskip

Next, consider the case of equal controls.  

\begin{example}
\label{equal_controls}
As previously, we take  $r=3$, $s=2.5$, $a=b=0.5$, so $p = \frac{7}{3}$, $q=\frac{4}{3}$. As it was shown in Section~\ref{sec:ab}  and Section \ref{subsec:mathcalAB}, 
(see also \eqref{def:mathcalA}, \eqref{def:mathcalB}), in this case   $\mathcal A\le \mathcal B$.  Indeed, for the Jacobian
$$
J = \left( \begin{array}{cc} 1-p & -ap \\ -bq & 1-q \end{array} \right) = - \frac{1}{3}  \left( \begin{array}{cc}  4 & 3.5 \\ 2 & 1   \end{array} \right),
$$   
the eigenvalues of $J$ are $-(5 \pm \sqrt{37})/6$,  
and $\lambda_{\min}\approx -1.8471$, the other eigenvalue is in the unit circle. So $\mathcal A = \frac{-1-\lambda_{\min}}{1-\lambda_{\min}}  \approx 0.2975 $, $\mathcal B=1- \frac  2{p(1+a)} \approx 0.4286$.  If we are interested only in local stability, we need to  apply the control $\mathcal A \approx 0.2975 $ only along the first eigenvalue, in other words we can apply the control matrix $\displaystyle \Upsilon_1=\left(\begin{array}{cc}  0.2975 &  0 \\ 0 & 0  \end{array}\right)$. 
The matrix $\Upsilon_1$ is not constant, so after its application, the  system  will no longer be in the PBC form, and  the  Lyapunov function from Section~\ref{subsec:Lyapunov} will not work. So instead of $\Upsilon_1$,  we consider 
 $\displaystyle \Upsilon_2=\left(\begin{array}{cc}  0.2975 &  0 \\ 0 &  0.2975  \end{array}\right)$. 

Now we proceed to the stochastic case. For Bernoulli $\xi$,  taking the values of  $\pm 1$ with probability $1/2$, condition \eqref{expect_old}  takes the form
\begin{align*}
& \mathbb E\ln |\alpha+(1-\alpha)\lambda_{\min}+\ell \xi(1-\lambda_{\min})| \\  = &
\frac 12 \ln |\alpha+(1-\alpha)\lambda_{\min}+\ell (1-\lambda_{\min})| + \frac 12  \ln |\alpha+(1-\alpha)\lambda_{\min} - \ell (1-\lambda_{\min})| 
\\ = & \frac 12 \ln \left| (\alpha+(1-\alpha)\lambda)^2-\ell^2(1-\lambda)^2 \right| \approx
\frac 12 \ln | (\alpha+(1-\alpha)(-1.847))^2-  8.1061\ell^2  |<0.
\end{align*}
This leads to the relation of $\alpha$ and $\ell$
$$
| (\alpha+(1-\alpha)(-1.847))^2-  8.1061 \ell^2  |<1.   
$$
Assume that the average control intensity is  $\alpha=0.25 <0.2975$, then $-1< 1.289 - 8.106 \ell^2  <1$ is equivalent to 
$ 0.0357<\ell^2 <0.282$, or to $\ell \in (0.189, 0.531)$. Since for PBC $\ell \geq \alpha$ are not considered, for $\alpha =0.25$,
$\ell \in  (0.189,0.25)$, we get stabilization.
Note that for $\alpha < {\mathcal A} \approx 0.2975$, solutions of the deterministic problem do not converge to $K$. 
In numerical simulations, we take smaller control intensity. For $\alpha =0.25$, without noise, 
solutions converge to a stable two-cycle,  see Fig.~\ref{figure_equal}, left. Introduction of noise $\ell = 0.18$ leads to a ``noisy equilibrium",  see Fig.~\ref{figure_equal},  the middle graphs. Once we introduce $\ell = 0.225$, we get stabilization due to introduction of noise (Fig.~\ref{figure_equal},  the right graphs)

\begin{figure}[ht]
\centering
\vspace{-42mm}

\hspace{-12mm} \includegraphics[height=.35\textheight]{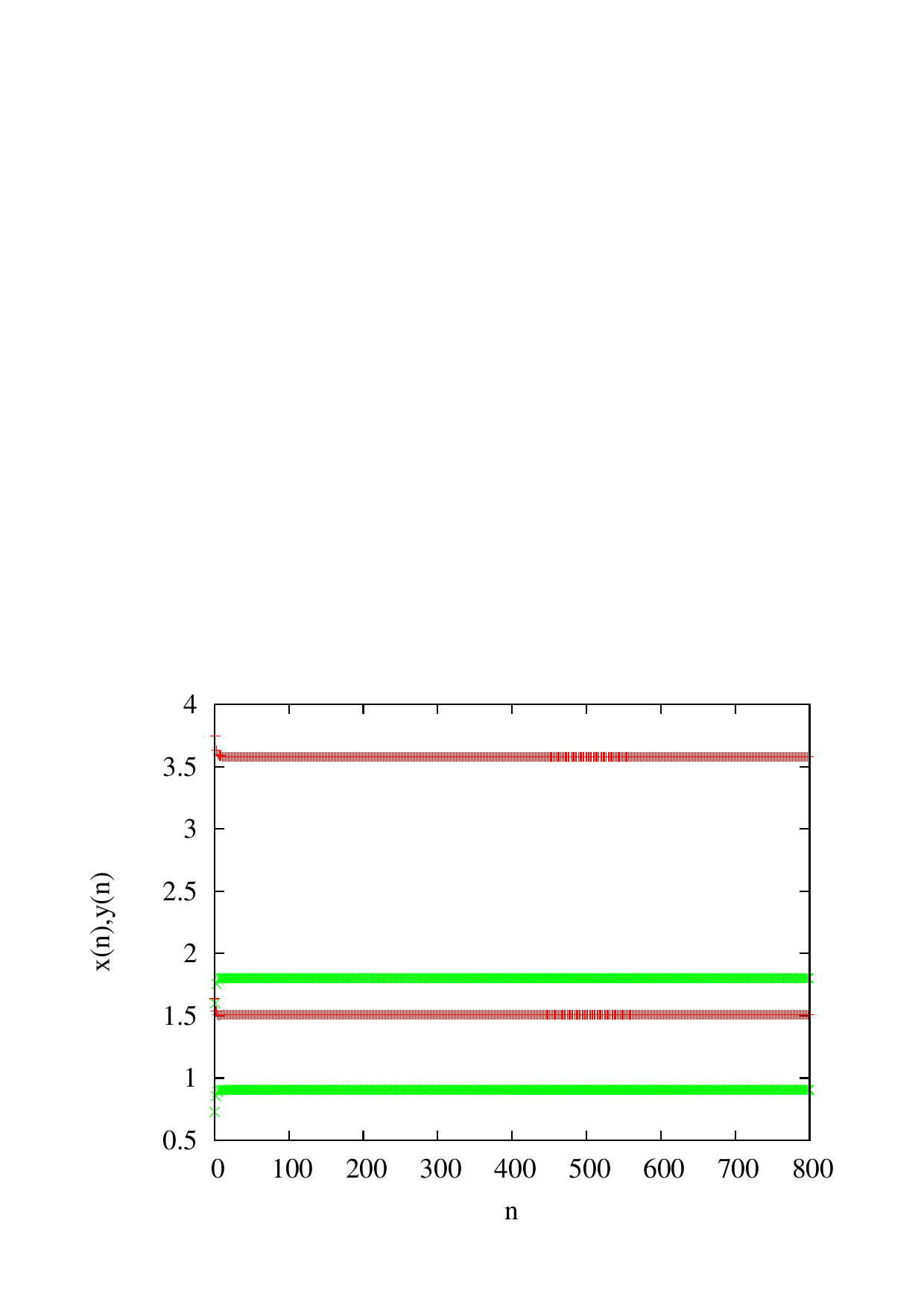}
\hspace{-12mm} \includegraphics[height=.35\textheight]{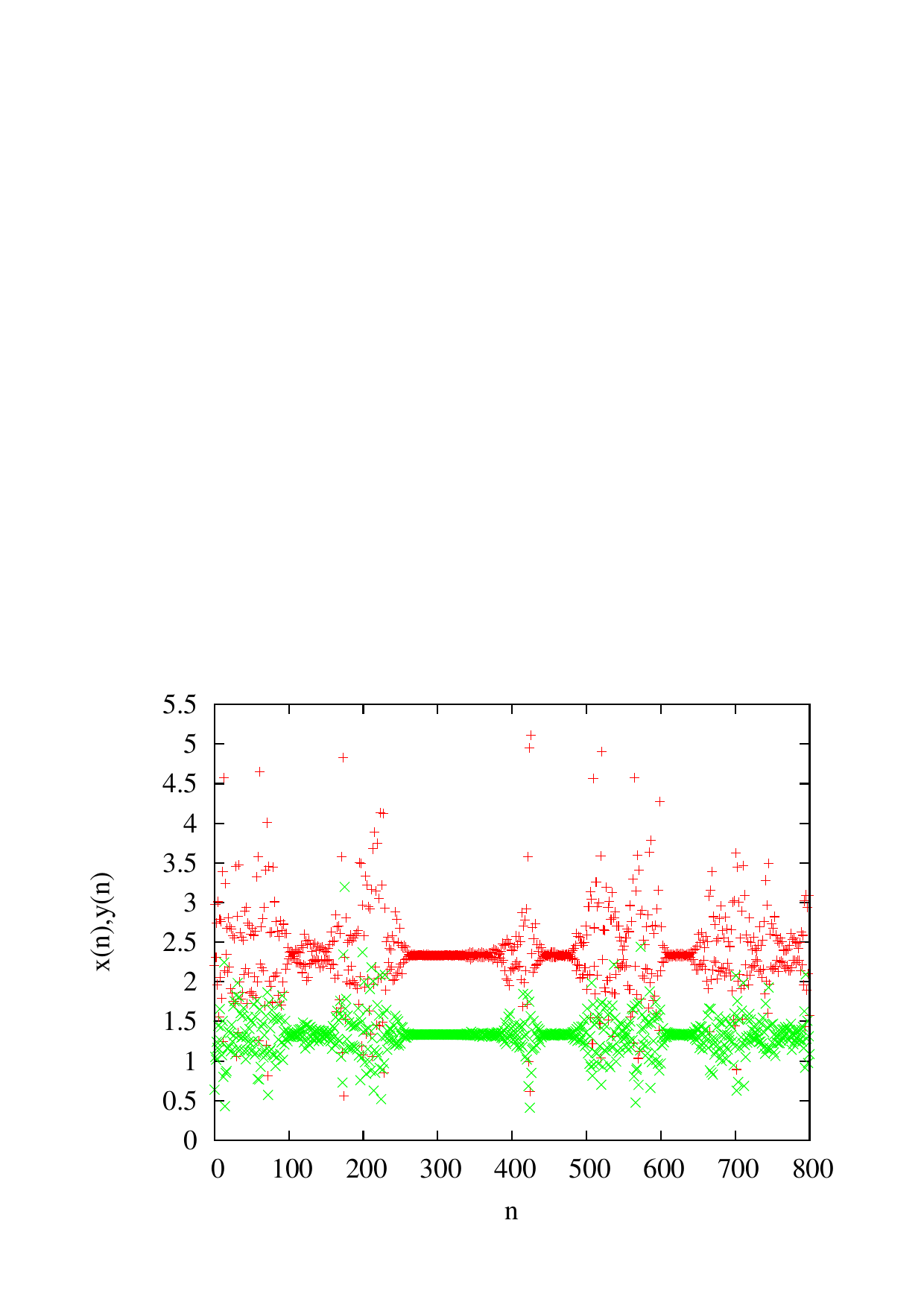}
\hspace{-12mm} \includegraphics[height=.35\textheight]{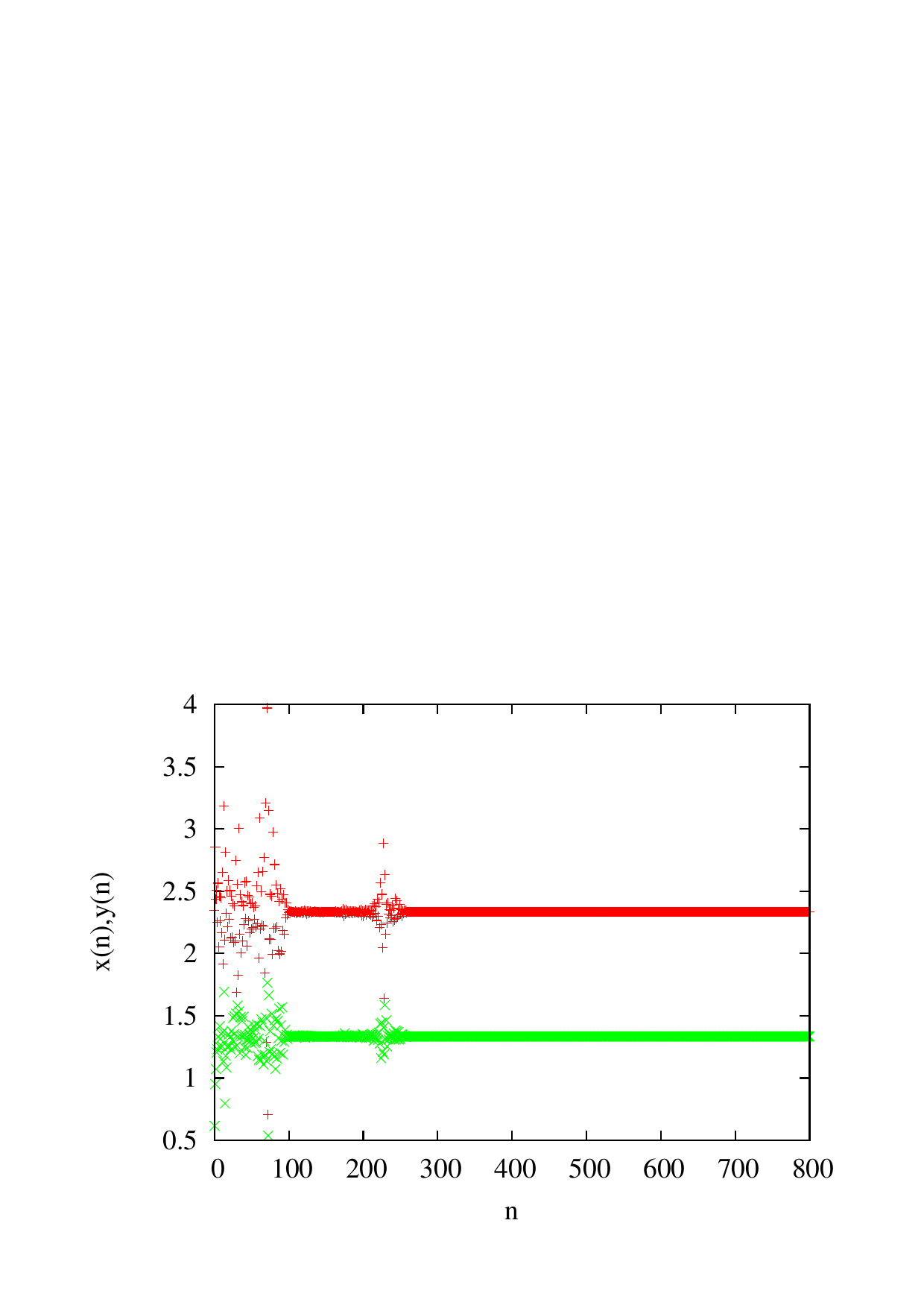}  \vspace{-42mm} 

\hspace{-12mm} \includegraphics[height=.35\textheight]{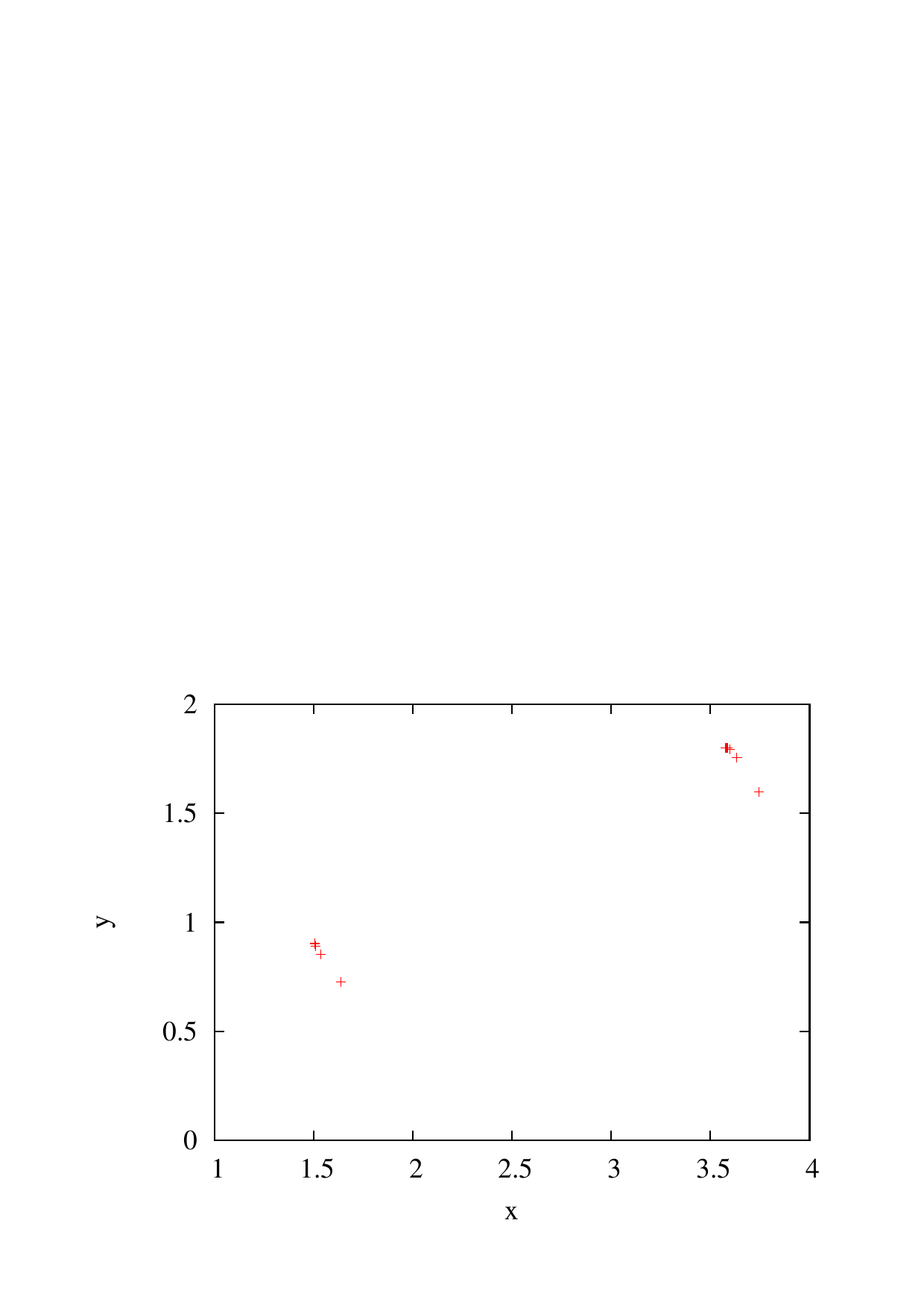}
\hspace{-12mm} \includegraphics[height=.35\textheight]{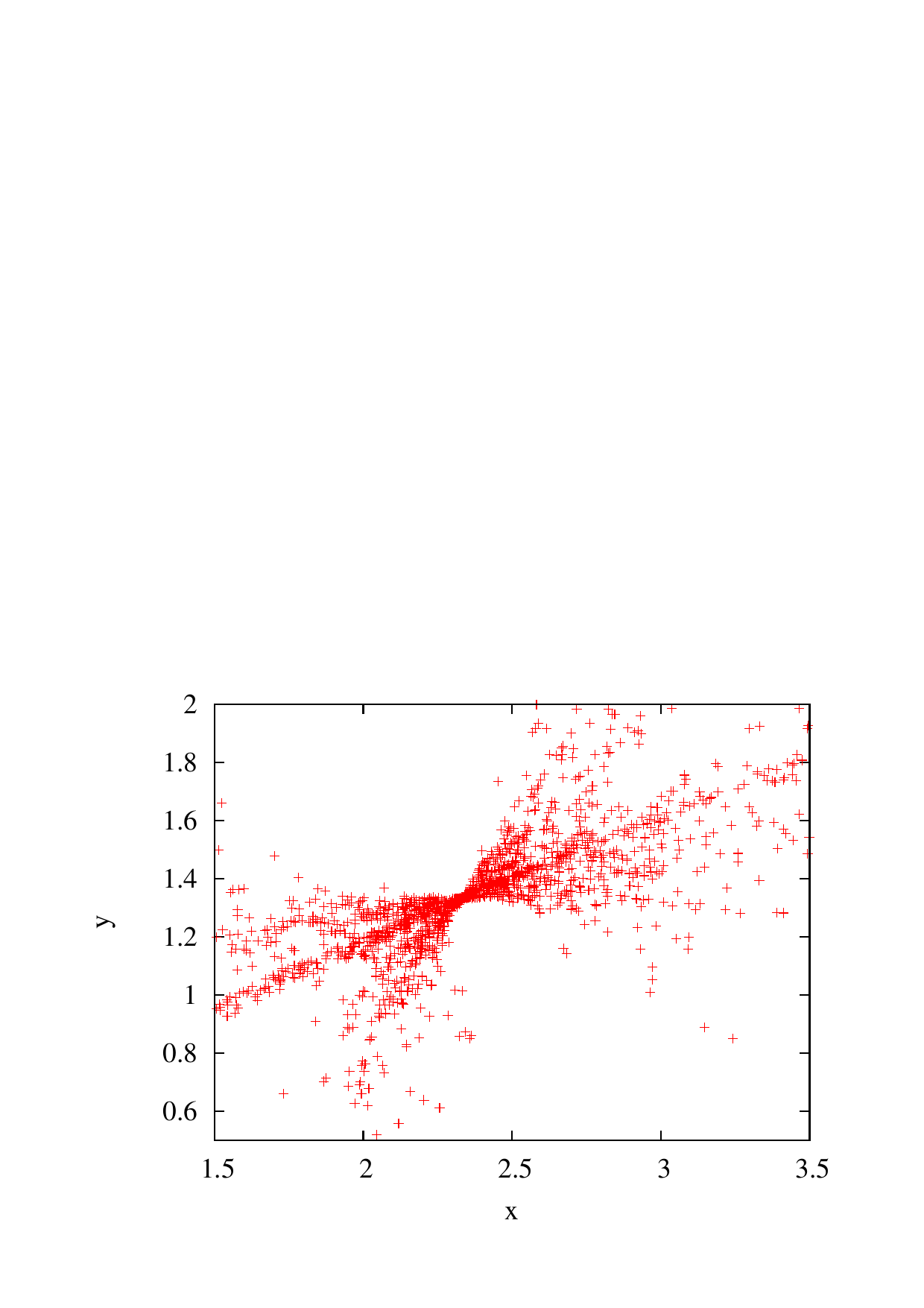}
\hspace{-12mm} \includegraphics[height=.35\textheight]{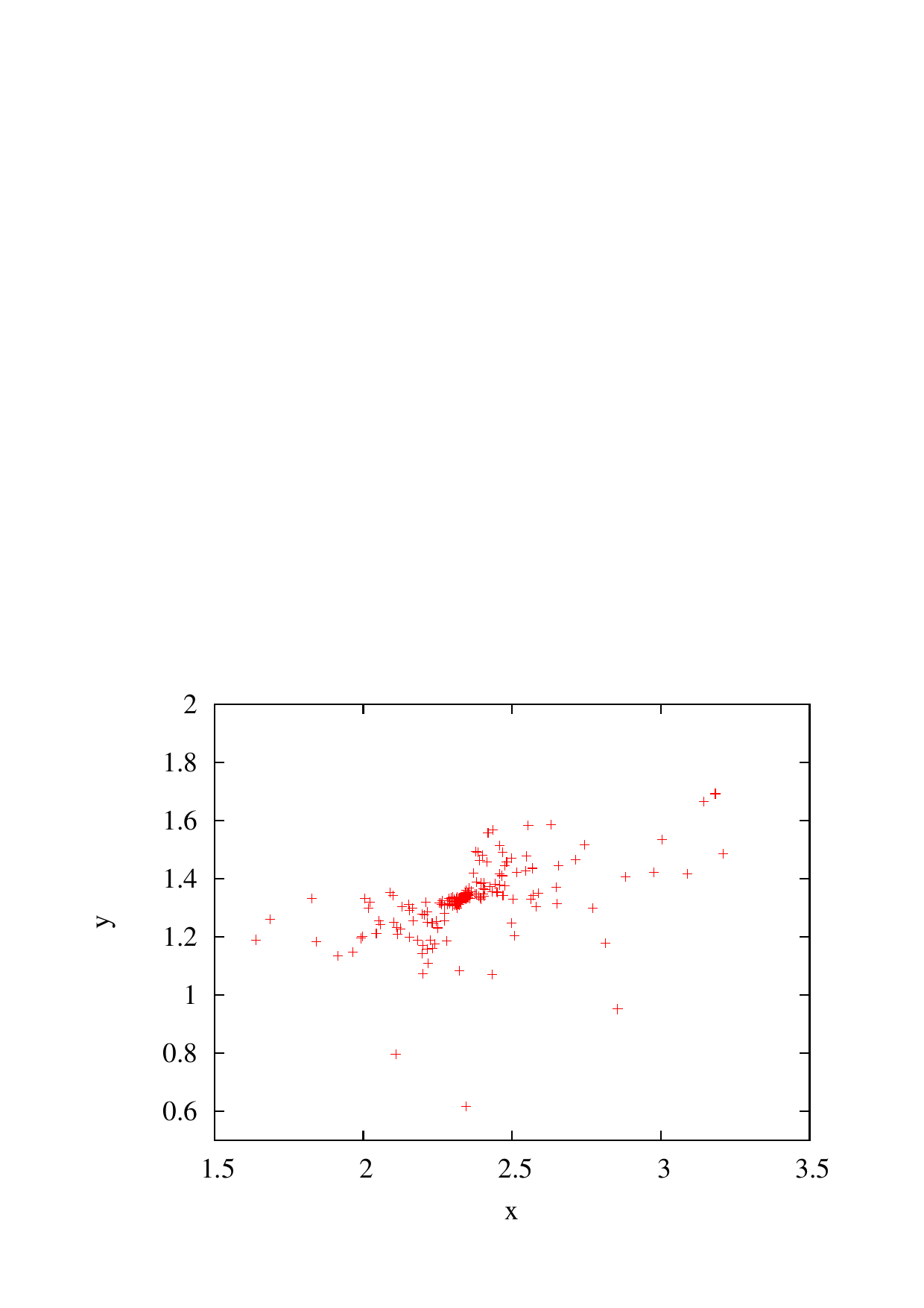}
\vspace{-8mm}

\caption{Runs for the Ricker map ($x$-coordinate in red and $y$ in green) for $r=3$, $s=2.5$, $a=b=0.5$,
$\alpha=\beta =0.25$  and (from left to right)  (1)  no noise;
(2) $\ell = 0.18$;
(3) $\ell = 0.225$. The bottom row shows the points in $xy$-plane.
We observe (1) a stable two-cycle;
(2) a noisy two cycle; (3) stabilization of the positive equilibrium $(7/3,4/3)$, with
 $(x_0,y_0)=(4,1.1)$ everywhere.
 }
\label{figure_equal}
\end{figure}

In addition, let us  take different initial points: first, we take the initial point quite close (within $10^{-6}$ in both coordinates) to the equilibrium. Here we get local stabilization, see Fig.~\ref{figure_local_global}, left and middle. In the left figure, we observe local stabilization: the solution stays in the small neighbourhood of $K$ for the first 500 iterations. However, if we extend history further, the accumulation of the numerical error will lead to temporary destabilization,  Fig.~\ref{figure_local_global}, middle, showing 2000 iterations. If we take the initial point $(4,1.1)$ quite far from $K=(7/3,4/3)$, eventually the solution arrives to the neighbourhood of $K$, see Fig.~\ref{figure_local_global}, right.
Everywhere above, we discussed local stabilization, but  Fig.~\ref{figure_local_global} to some extend illustrates its global character.

\begin{figure}[ht]
\centering
\vspace{-42mm}

\hspace{-12mm} \includegraphics[height=.35\textheight]{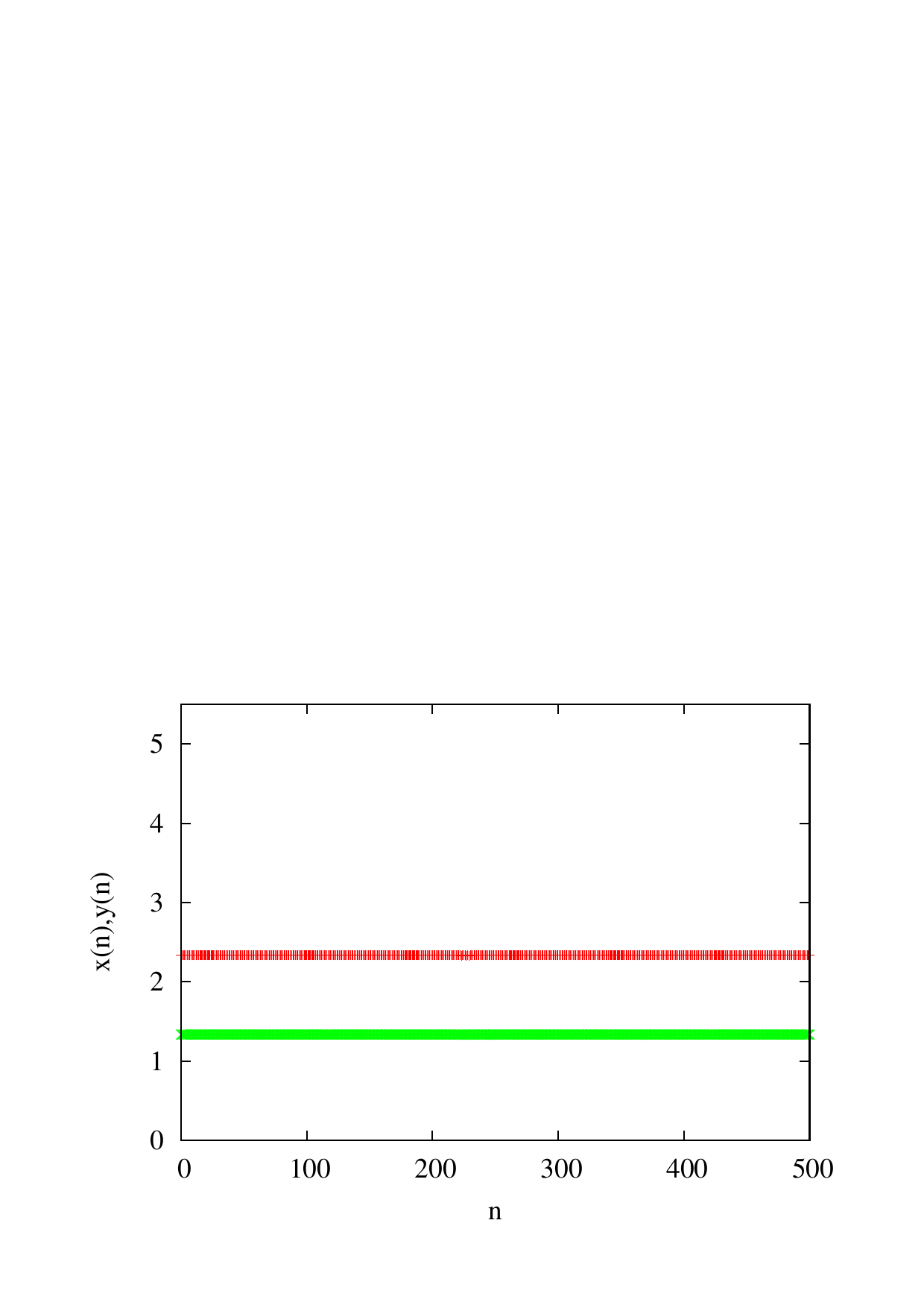}
\hspace{-12mm} \includegraphics[height=.35\textheight]{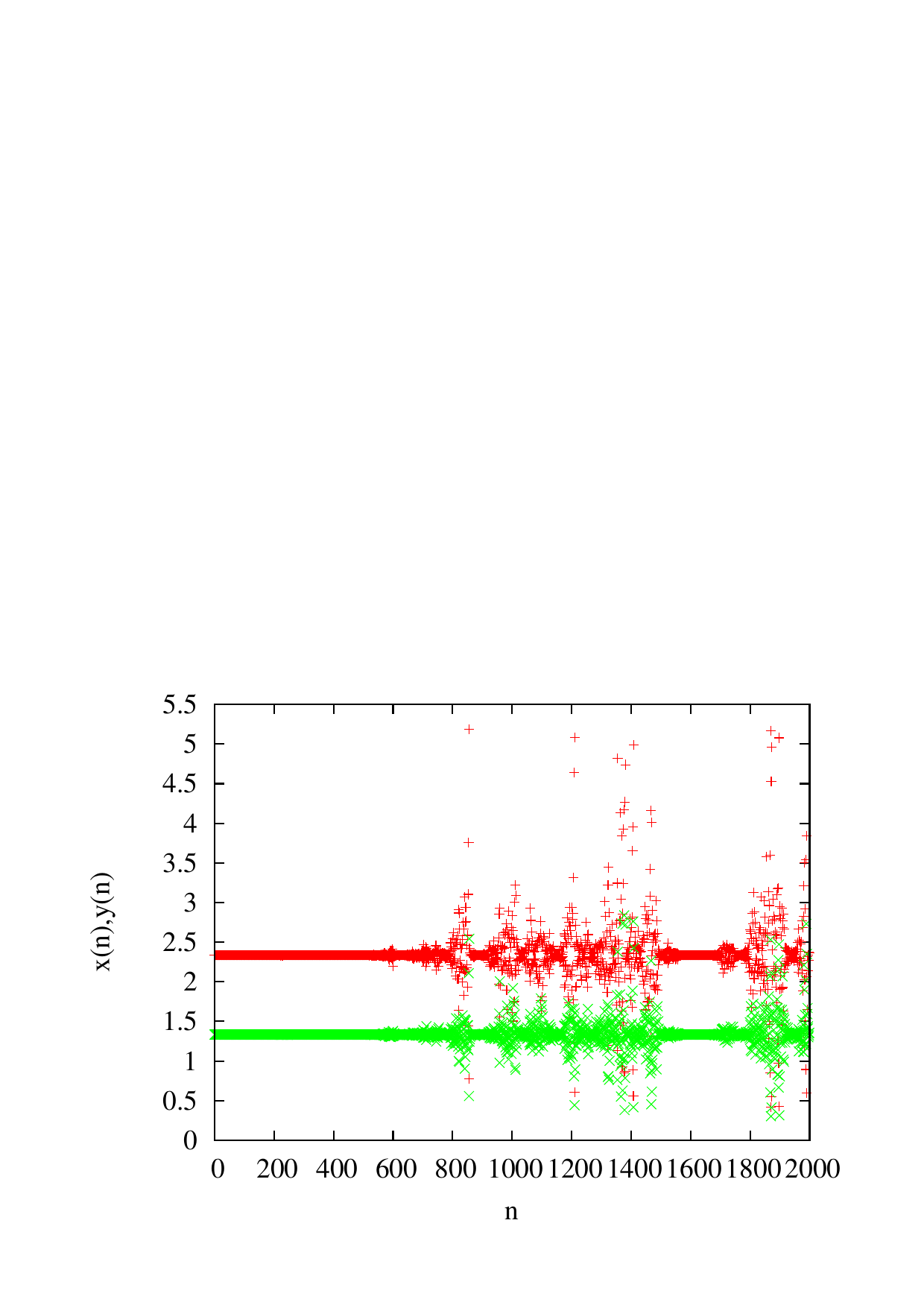}
\hspace{-12mm} \includegraphics[height=.35\textheight]{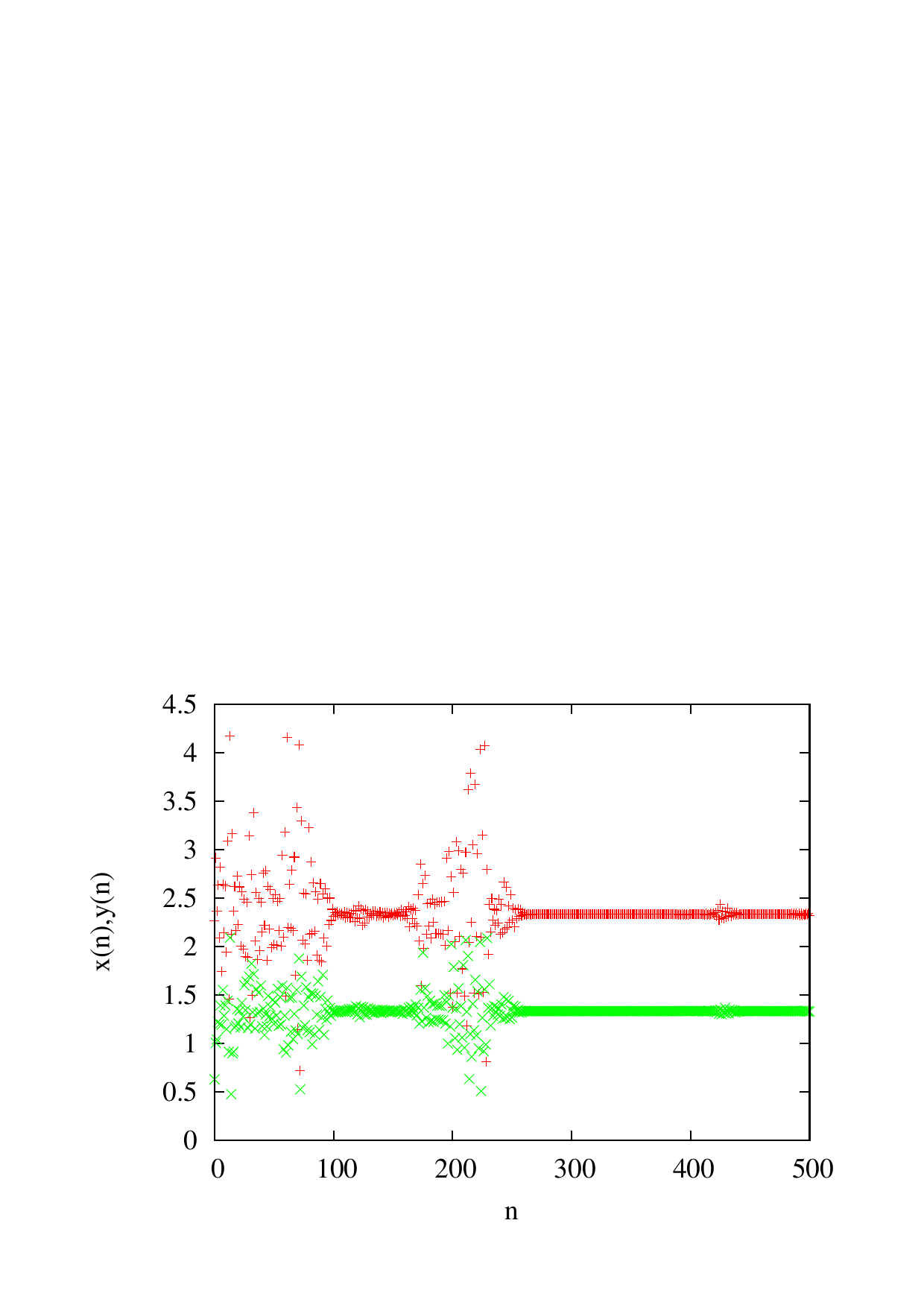}
\vspace{-8mm}

\caption{Runs for the Ricker map ($x$-coordinate in red and $y$ in green) for $r=3$, $s=2.5$, $a=b=0.5$,
$\alpha=\beta =0.25$, $\ell = 0.2$  and (from left to right)   (1)  $x_0=2.333333$, $y_0=1.333333$, 500 iterations;
(2) $x_0=2.333333$, $y_0=1.333333$, 2000 iterations;
(3) $x_0=4$, $y_0=1.1$, 500 iterations. 
 }
\label{figure_local_global}
\end{figure}

\end{example}

In all the examples above, the values of $r,s$ exceeded 2, but were not much bigger (3 and 2.5, respectively). Let us increase the value of $s$ to 4.
In this case, theoretically prescribed values for global stabilization exceed 0.5, however, numerical simulations show stabilization for $\alpha=0.3$, $\beta =0.45$,
once the noise amplitudes are large enough.

\begin{example}
\label{ex:more_unstable}
The stabilizing influence of noise is even more striking if higher control levels of $\alpha,\beta$ are required for bigger $r,s$.
Consider $r=3$, $s=4$, $a=0.5$, $b=0.4$ with $\alpha=0.3$, $\beta =0.45$.
As $s>r$, here stronger $y$-control is expected. 

In Fig.~\ref{figure_4}, we observe (left) a stable two-cycle without noise, (second) a noisy two-cycle for both $x$ and $y$ with smaller noise amplitudes
$\ell = 0.05$, $\bar{\ell} = 0.06$. When the noise values increase (third) to   $\ell = 0.1$, $\bar{\ell} = 0.1$, the trajectories have a noisy equilibrium form, while for 
 $\ell = 0.15$, $\bar{\ell} = 0.2$  (right), the positive equilibrium becomes globally stable. 

\begin{figure}[ht]
\centering
\vspace{-30mm}

\hspace{-10mm} \includegraphics[height=.265\textheight]{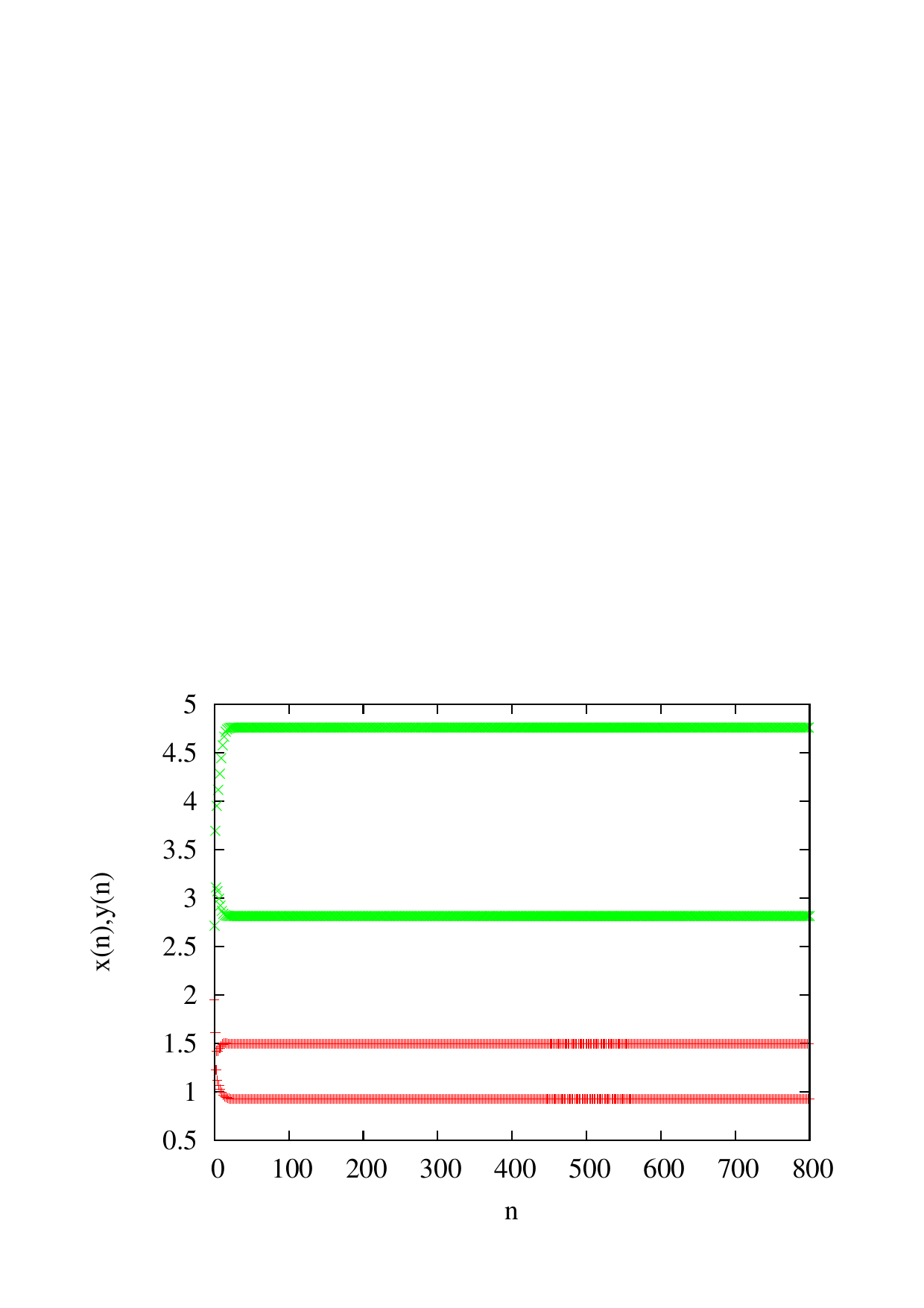}
\hspace{-10mm} \includegraphics[height=.265\textheight]{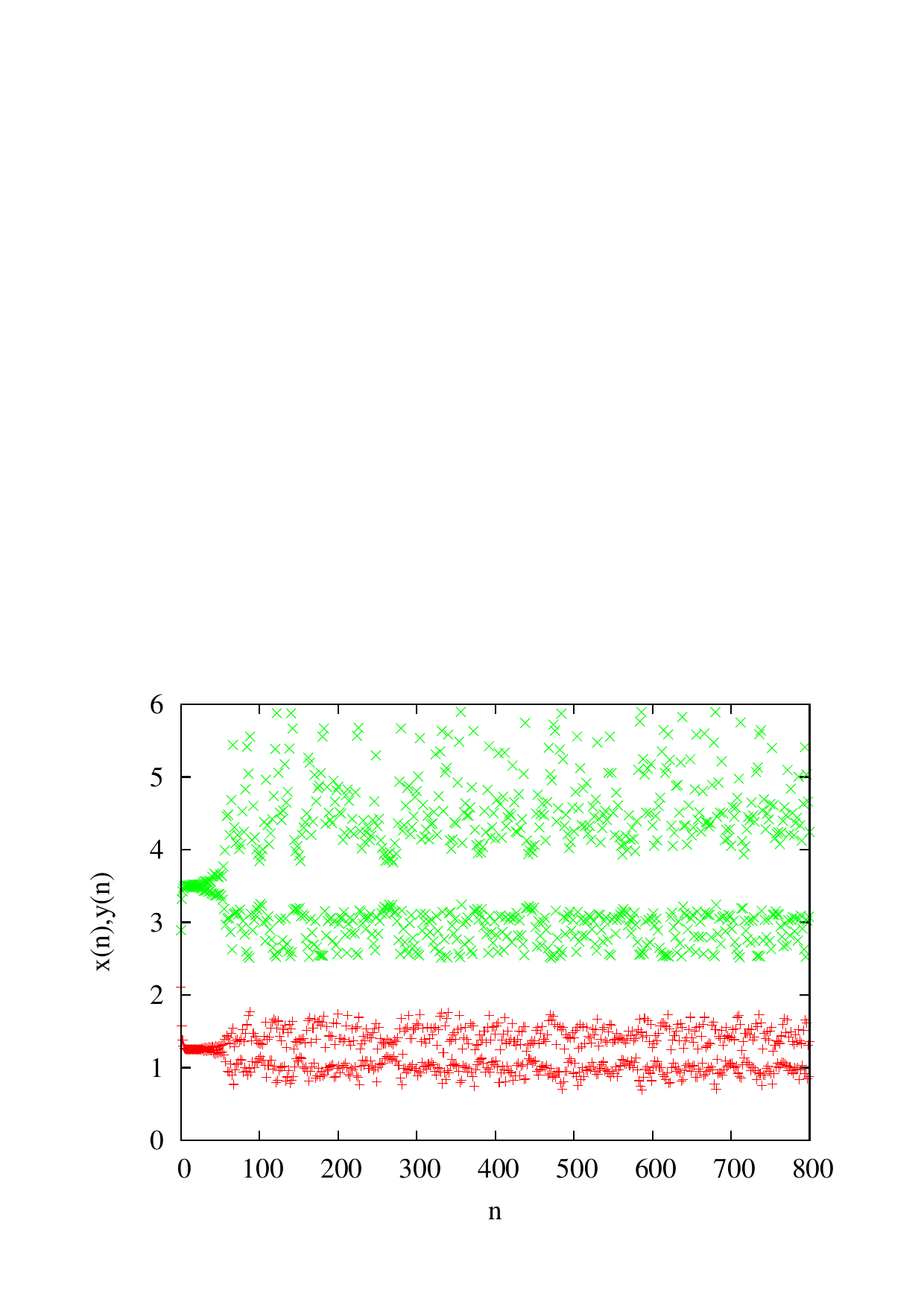}
\hspace{-10mm} \includegraphics[height=.265\textheight]{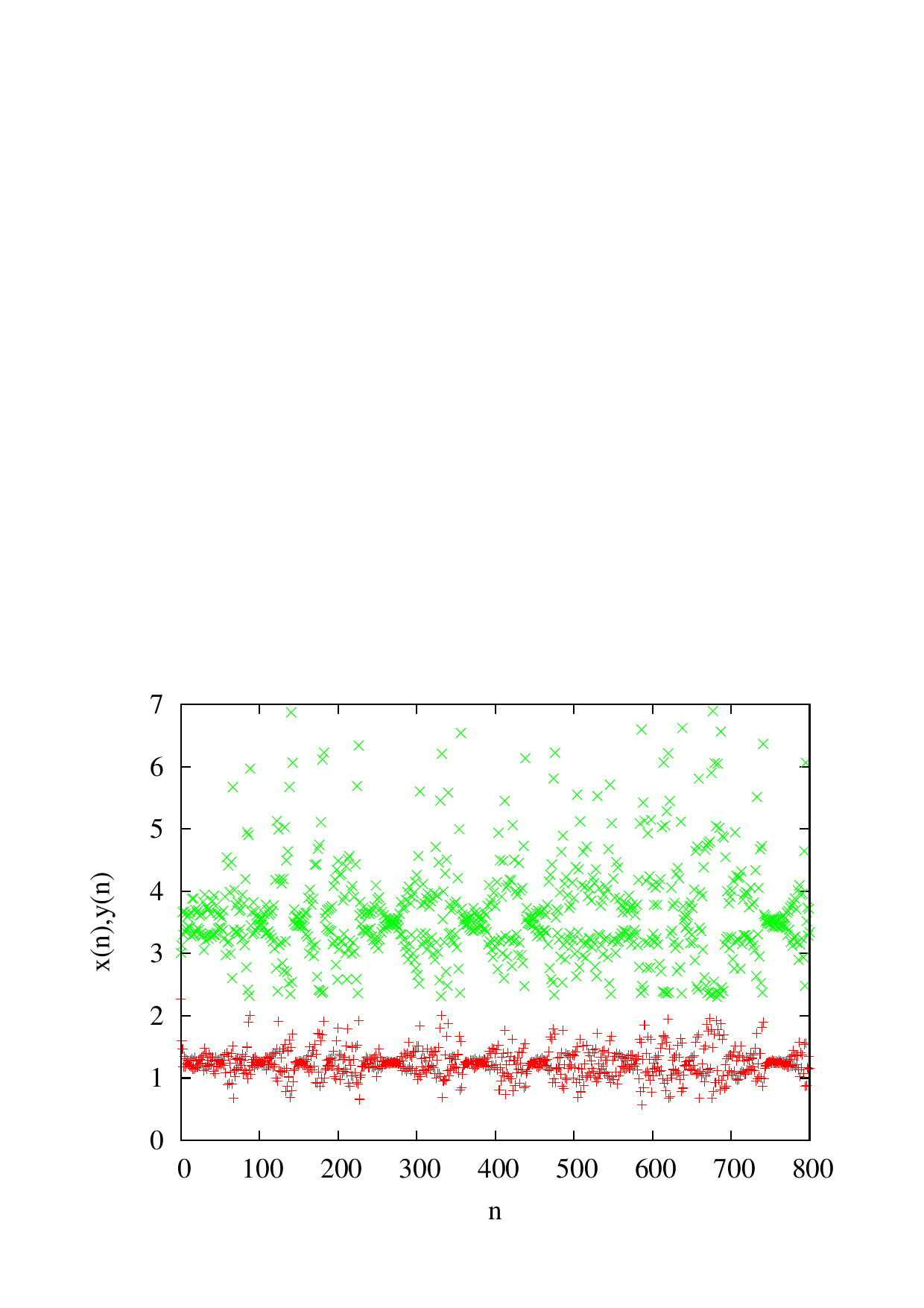}
\hspace{-10mm} \includegraphics[height=.265\textheight]{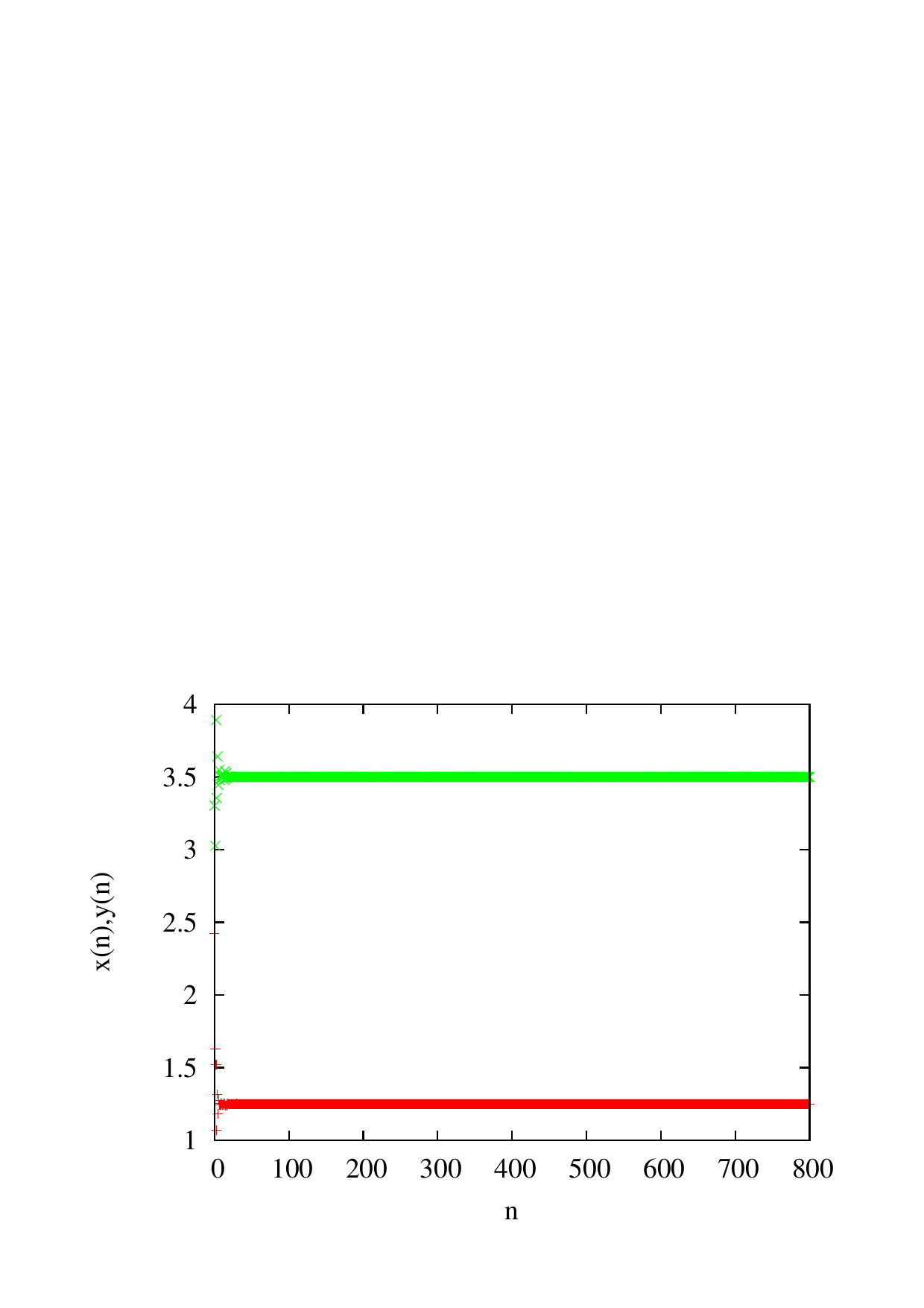} \vspace{-32mm} 

\hspace{-10mm} \includegraphics[height=.265\textheight]{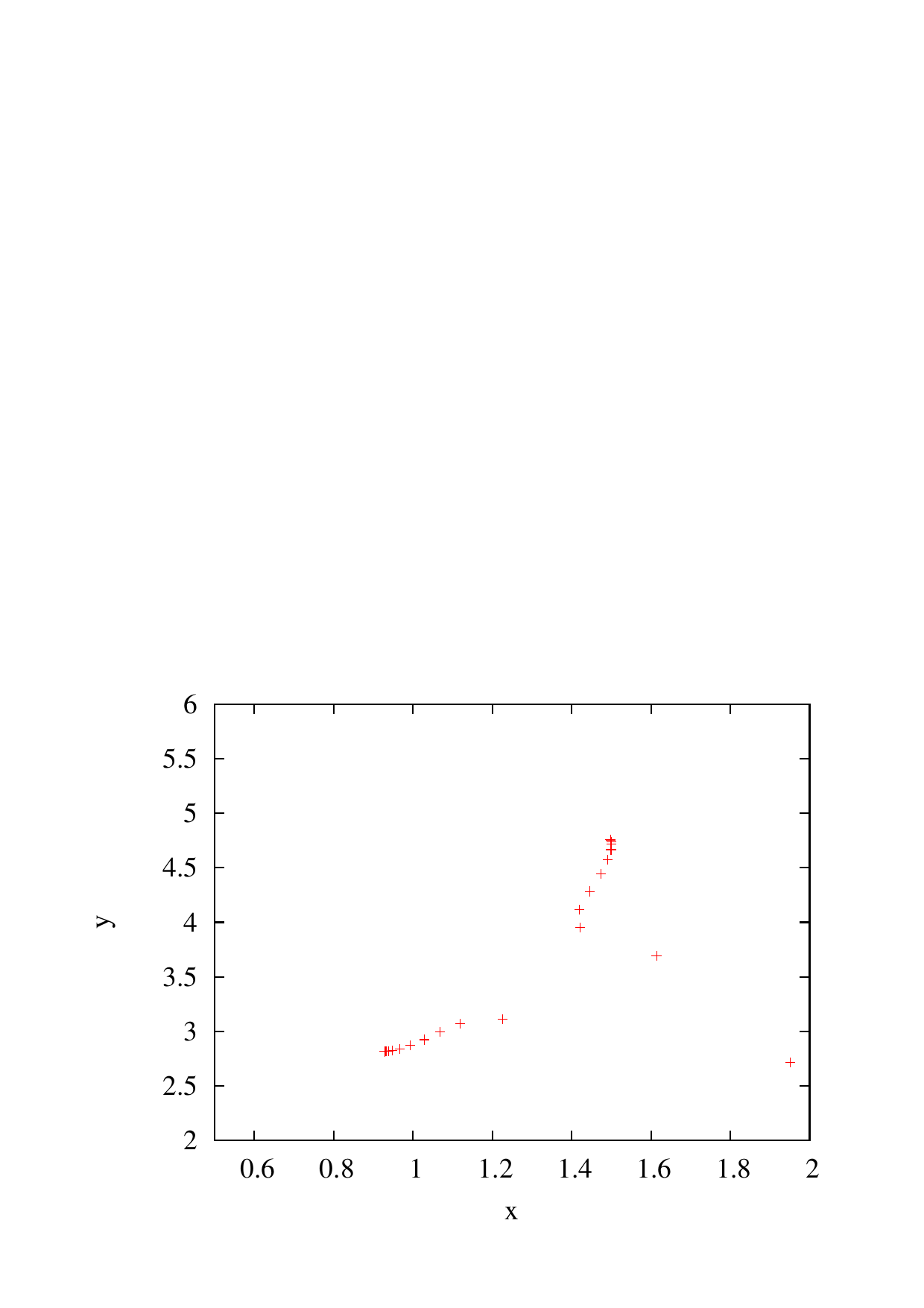}
\hspace{-10mm} \includegraphics[height=.265\textheight]{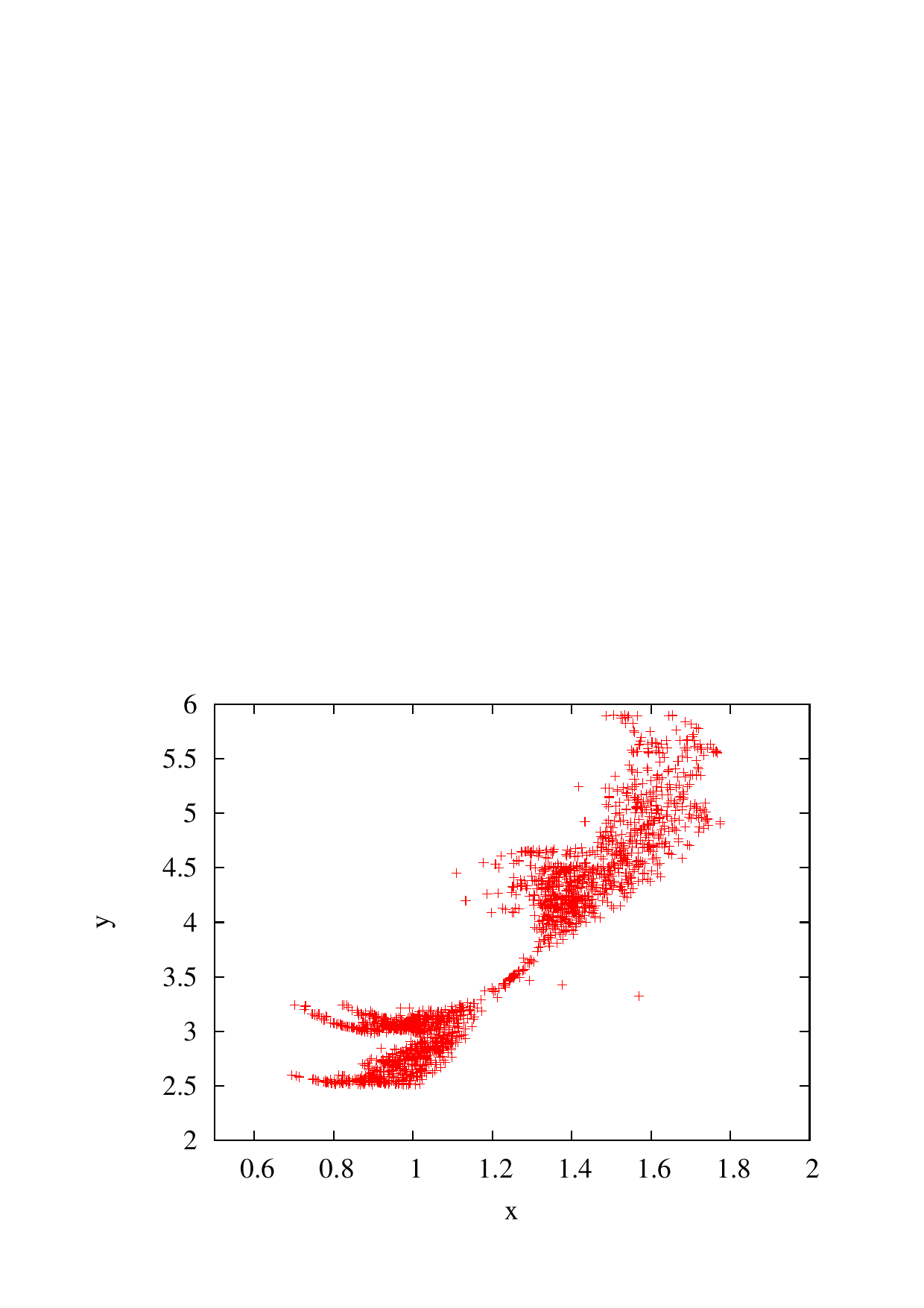}
\hspace{-10mm} \includegraphics[height=.265\textheight]{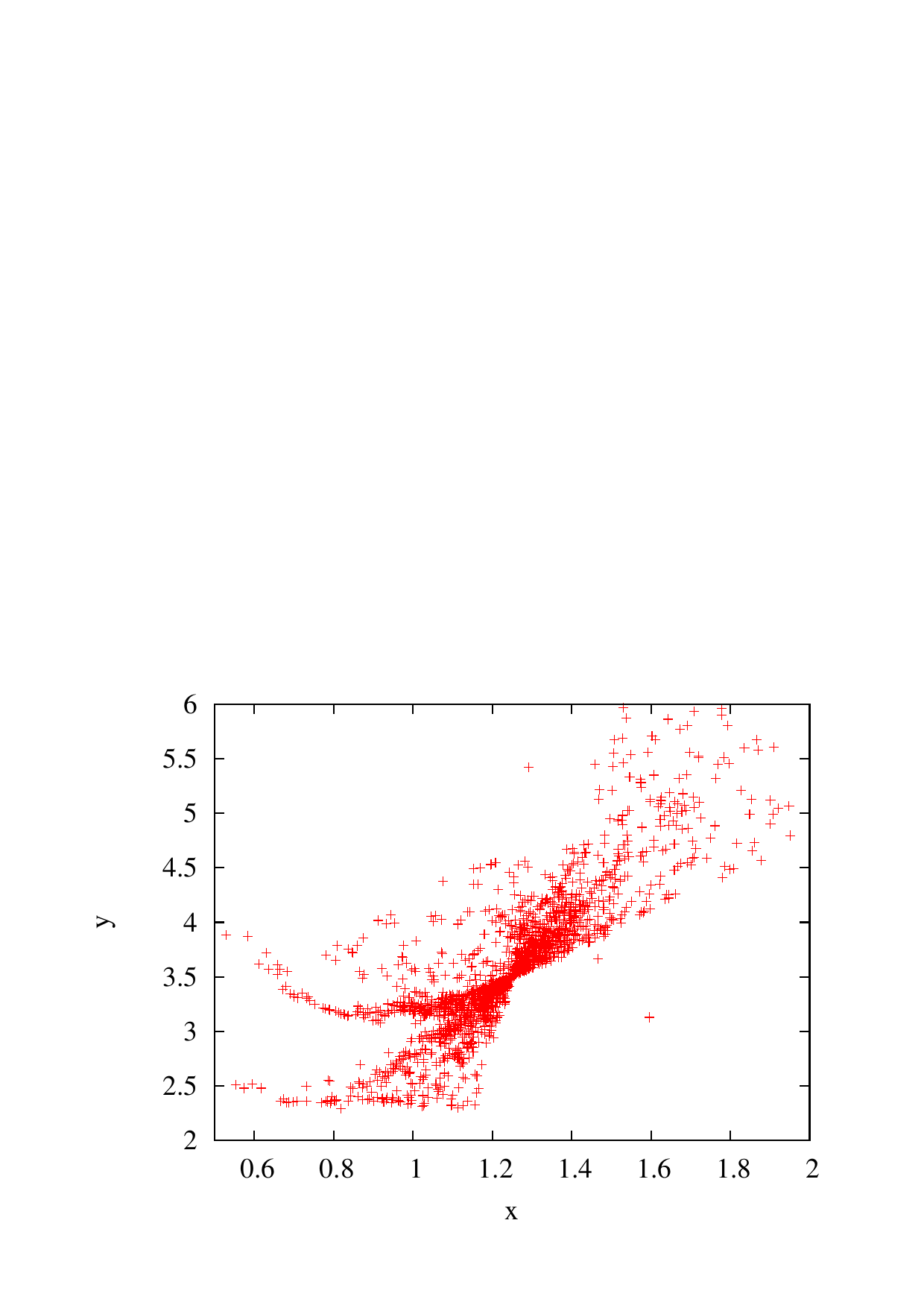}
\hspace{-10mm} \includegraphics[height=.265\textheight]{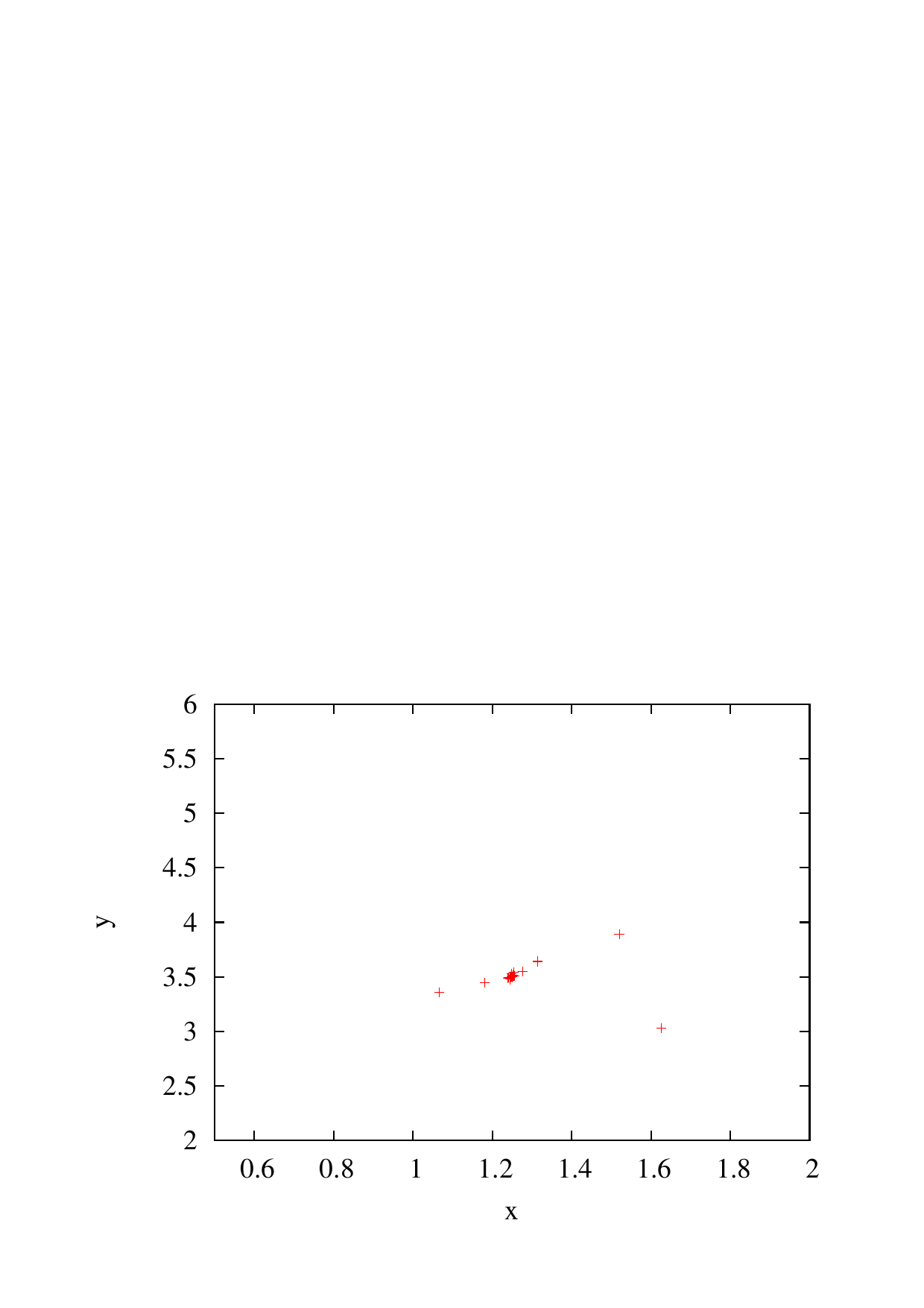}
\vspace{-6mm}

\caption{Runs for the Ricker map ($x$-coordinate in red and $y$ in green) for $r=3$, $s=4$, $a=0.5$, $b=0.4$,
$\alpha=0.35$, $\beta =0.45$  and (from left to right) (1)  no noise;
(2) $\ell = 0.05$, $\bar{\ell} = 0.06$;
(3) $\ell = 0.1$, $\bar{\ell} = 0.1$;
(4) $\ell = 0.15$, $\bar{\ell} = 0.2$. The bottom row shows the corresponding points in $xy$-plane.
We observe (1) a stable two-cycle;
(2) a noisy two cycle; (3) a noisy equilibrium; (4) stabilization of the positive equilibrium $(7/3,4/3)$.
Everywhere we chose $(x_0,y_0)=(4,1.1)$.
 }
\label{figure_4}
\end{figure}

\end{example}

\section{Conclusions and Discussion}
\label{sec:conclusions}

According to \cite{BR2023}, there are two types of systems and equilibrium points. 
Some of these points can be stabilized with strong enough PBC, others not.
The idea is easily illustrated by a one-dimensional map with multiple intersections of the line $y=x$.
Then, with strong enough PBC (the control coefficient being close enough to one), all locally unstable points $x^*$ with $(f(x)-x^*)(x-x^*)<0$ 
in some neighbourhood of $x^*$, $x \neq  x^*$ can be stabilized at once  \cite{BR2023}, while repellers with $(f(x)-x^*)(x-x^*)>0$ are in principle not stabilizable with this method
(unlike  some other approaches, such as the Target Oriented Control, allowing stabilization of a repeller). 
Further, systems can be classified as those whose stability can be improved when noise is introduced as a part of the stabilization parameters and those for which the situation deteriorates with noise \cite{pbclocsys}. 

In the present paper,   for the planar Ricker equation with variable PBC,  we
\begin{enumerate}
\item
illustrated that the unique positive equilibrium can be stabilized with PBC; moreover, the average of the noisy stabilizing parameters can be less than the stabilization bounds in the deterministic case;
\item
found sufficient conditions for PBC parameters guaranteeing local stabilization of the positive equilibrium;
\item
established global  stabilization  tests with the Lyapunov type function following the construction in  \cite{BHEBL};
to this end,  a closed invariant set in the first quadrant separated from the axes was designed;
\item
confirmed our results with numerical simulations and illustrated that some theoretically established bound can be further improved.
\end{enumerate}

For the positive influence of  noise, we follow the techniques from  \cite{BR2023,pbclocsys}.

Still, the current research is a step in exploration of Ricker systems.
\begin{enumerate}
\item
As Section~\ref{sec:ex}  illustrates, conditions on the control parameters, both in the deterministic and the stochastic cases, are just sufficient.
Obtaining sharp local stabilization conditions is still an open problem.
\item
On the way of advancing the previous item, justify monotonicity of stabilization: stronger control will always keep stabilization property.
In particular, prove that if for some $\alpha_0, \beta_0$ the spectral radius of $J_{\alpha_0, \beta_0}$ is less than one, then  
system \eqref{eq:RickPBCvar} has a locally stable solution at $K$ for any $\alpha_n\ge \alpha_0$,  $\beta_n\ge \beta_0$.
\item
Even a more challenging task is to justify that local stabilization implies global one, as we observed in simulations. Note that even without control, the fact of the equivalence of local and global asymptotic stability of the positive equilibrium is only justified for  $0<r,s \leq 2$ \cite{BHEBL}.
For noisy systems, there is some advantage that eventually a solution can get in the attracting domain leading to stabilization, once this attracting domain is invariant.
Another possible extension is to higher dimensional Ricker systems, exploring local and global stabilization, both in the deterministic and the stochastic cases.
\item
In the current paper, we explored a.s. local and global asymptotic stability. It will be interesting to consider, for instance,  stability
in probability or  mean-square stability. For the latter case, can some bifurcation theory be developed?
This is quite a challenging problem, as, to the best of our knowledge, there is no developed bifurcation theory 
for the planar Ricker model in the deterministic and non-controlled case.

\end{enumerate}




\section*{Acknowledgment}

E. Braverman was supported by the NSERC Grant RGPIN-2020-03934.
The authors are grateful to the anonymous referees whose thoughtful comments significantly improved presentation of our results.



\section{Appendix}
\label{sec:Ap}

\subsection{Proof of Lemma \ref{lem:locmax}}
\label{subsec:Ap1}

{\bf (a)}
First of all, let us analyze critical points of $g$ and find the  inflection  point $y_{\rm infl}$ of $g(\beta,0, \cdot)$ by taking the derivative:
\[
g'_y(\beta, 0, y) =(1-y)(1-\beta)e^{s-y} +  \beta, \quad g''_{y} (\beta, 0, y)=(y-2)(1-\beta)e^{s-y} .
\]
We see that $y_{\rm infl}=2$, $g''_{y} (\beta, 0, y)<0$ for $y<2$ and  $g''_{y} (\alpha, 0, y)>0$ for $y>2$.  Also $g'_y(\beta, 0, 0)=(1-\beta)e^{s} +\beta>0$, $g'_y(\beta, 0, 1)=\beta>0$, $\lim\limits_{y\to \infty}g'_y(\beta, 0, y)=\beta$, $g'_y(\beta, 0, y)<\beta$ for $y>1$.  
The point $y=2$ is the minimum for $g'_y$, so for a local maximum of $g$ to exist,   we need to have $g'_y(\beta, 0, 2) < 0$.
If  $g'_y(\beta, 0, 2) \geq 0$, $g$ increases in $y$ for any $y \in [0,\infty)$.

Next, the inequality $g'_y(\beta, 0, 2)\le 0$ holds if and only if
\[
-(1-\alpha)e^{s-2} +\alpha\le 0 \quad \Leftrightarrow \quad 
\beta \le \frac{e^{s-2}}{1+e^{s-2}} =\beta _1,
\]
see \eqref{def:beta12},  which implies that for 
\begin{equation}
\label{cond:locmax1}
\beta \geq  \beta_1=\frac{e^{s-2}}{1+e^{s-2}}=1-\frac{1}{1+e^{s-2}},
\end{equation}
the  function $g(\beta, 0, \cdot)$  has no local maximum and increases for all $y>0$. 
For $\beta < \beta_1$, the  function $g(\beta, 0, \cdot)$ has two critical points: one on $(1,2)$ with the negative second derivative, which is a local maximum, 
and a locat minimum on $(2,\infty)$ where the second derivative is positive. Thus, there is a unique local maximum of $g$ which is at $\bar y(\beta)\in (1,2)$.

Further, as
\begin{equation}
\label{equilibrium}
g(\beta, 0, y)>y, \quad y\in (0, s), \quad g(\beta, 0, y)<y, \quad y\in (s, \infty), \quad g(\beta, 0, s)=s, 
\end{equation}
we conclude that, once  \eqref{cond:locmax1} is satisfied, $g(\beta, 0, \cdot)$ maps the interval $(0, H)$ into itself for any $H>s \geq H_2(\beta) $.

Assume now that \eqref{cond:locmax1} does not hold, i.e. $\beta<\beta_1$, then there is a point of local maximum $\bar y(\beta) \in (1, 2)$, as demonstrated above.  
Since $ye^{-y}\le e^{-1}$ and $\bar y<2$,  we can estimate
\begin{equation}
\label{est:locmax}
g(\beta, 0, \bar y) =(1-\beta)\bar ye^{-\bar y}e^s +\beta \bar y<(1- \beta)e^{s-1} +2\beta  = H_2(\beta),
\end{equation}
which concludes the proof of Part\,(a).

{\bf (b)}  We have equivalence
\[
H_2(\beta) = (1-\beta)e^{s-1} +2\beta<s  \quad \Leftrightarrow \quad \beta >\frac{e^{s-1}-s}{e^{s-1}-2}=1-\frac {s-2}{e^{s-1}-2}=\beta_2,
\]
see \eqref{def:beta12}, which justifies  Part\,(b). Note that for $s>2$ the above inequality has solutions in $(0,1)$, since $\beta_2\in (0,1)$, 
while for $s\in (1+\ln 2, 2]$ the above inequality has no solutions in $(0,1)$. 
We also conclude that $\max_{y\le s}g(\beta, 0, y) < H_2(\beta) <  s$. 

{\bf (c) } 
First of all, since $0 \leq g(\beta, x, y) \leq g(\beta, 0, y)$ for any $x \geq 0$, we only have to prove that  $\displaystyle \max_{y\in [0,\mathcal H_2(\beta)] }g(\beta, 0, y) \leq \mathcal H_2(\beta)$.
Following Part\,(a), we distinguish between the two cases $\beta < \beta_1$ and $\beta \geq  \beta_1$. 

In the former case, the value of $g(\beta, 0, y)$ at the  only local maximum $\bar y$ is less than $H_2(\beta)$. Note that after $\bar y$, the function $g(\beta, 0, y)$ decreases in $y$ down to the minimum point exceeding  the inflection point $y=2$ and then increases up to the value of $s$ at $y=s$ (once $s>2$). Thus by \eqref{equilibrium}, $g(\beta, 0, \cdot)$  maps the segment $[0,\mathcal H_2(\beta)]$ onto itself, whether  $\mathcal H_2(\beta) =H_2(\beta)$ or $\mathcal H_2(\beta) =s$. 

In the latter case, the function $g(\beta, 0, \cdot)$  is monotone increasing satisfying \eqref{equilibrium} and thus, as is mentioned in the proof of Part\,(a), 
maps any $[0,H]$ to itself for $H \geq s$. Since $\mathcal H_2(\beta) \geq s$, we get $g(\beta, x, y)\le \mathcal H_2(\beta) $ and justify Part\,(c).

{\bf (d)}
To prove Part\,(d), we need to consider only $y\in (\mathcal H_2(\beta), H]$. Since $\mathcal H_2(\beta) \geq s$, we have  by \eqref{equilibrium} $g(\beta, 0, y)<y$
for $y>\mathcal H_2(\beta)$, then $g(\beta, x, y)\le g(\beta, 0, y)\le y<H$, which concludes the proof of the first part for $g(\beta, x, y)$.

The remaining parts (e)-(g)  for $f(\alpha, x,y)$ repeat all the steps of the first part, due to the symmetry of the expressions for the two functions $f$ and $g$.

\subsection{Proof of Lemma \ref{lem:alphainvar0}}
\label{subsec:Ap2}
To prove Part\,(a) we notice that $\mathcal H_1'(\alpha)=2-e^{r-1}<0$, when $\alpha<\alpha_2$ and $\mathcal H_1'(\alpha)=0$, when $\alpha\ge\alpha_2$. 
Part (b) follows from Part (a).

Now we prove Part\,(c).  Parts\,(a)-(b) yield that 
\begin{equation*}
\begin{split}
 &\mathcal H_1( \tilde  \alpha_1) \leq  \mathcal H_1( \tilde  \alpha_2), \quad c_1( \tilde  \beta_1)  \geq c_1( \tilde  \beta_2), \quad \mbox{so} \quad [c_1(\tilde \beta_1), \mathcal H_1(\tilde \alpha_1)]\subseteq [c_1(\tilde \beta_2), \mathcal H_1(\tilde \alpha_2)],\\
 &\mathcal H_2(\tilde \beta_1) \leq  \mathcal H_2(\tilde \beta_2), \quad c_2(\tilde \alpha_1) \geq c_1(\tilde \alpha_2), \quad \mbox{so} \quad [c_2(\tilde \alpha_1), \mathcal H_2(\tilde \beta_1)]\subseteq [c_2(\tilde \alpha_2), \mathcal H_2(\tilde \beta_2)],
\end{split}
\end{equation*}
which, in turn, implies \eqref{ineq:ab12} and 
\[
[\underline c_1(\tilde \alpha_1, \tilde \beta_1), \mathcal H_1(\tilde \alpha_1)]\subseteq  [\underline c_1(\tilde \alpha_2, \tilde \beta_2), \mathcal H_1(\tilde \alpha_2)],
\quad [\underline c_2(\tilde \alpha_2, \tilde \beta_2), \mathcal H_2(\tilde \beta_1)]\subseteq [\underline c_2(\tilde \alpha_2, \tilde \beta_2), \mathcal H_2(\tilde \beta_2)].
\]
To prove Part\,(d), we apply \eqref{def:mathcalH12} and, for  $x\in [c_1(\beta), \mathcal H_1(\alpha)], \, y\in [c_2(\alpha), \mathcal H_2(\beta)]$,  get the estimates
\begin{equation}
\label{est:xbel}
\begin{split}
x[(1-\alpha)e^{r-x-ay}+\alpha]& =xe^{-x}(1-\alpha)e^{r-ay}+\alpha x \\ & \ge u_{1}(\alpha, \beta)(1-\alpha)e^{r-a \mathcal H_2(\beta)}+\alpha c_1(\beta),\\
y[(1-\beta)e^{s-bx-y}+\beta] &=ye^{-x}(1-\beta)e^{s-bx}+\beta y \\ & \ge u_{2}(\alpha, \beta)(1-\beta)e^{s-b\mathcal H_1(\alpha)}+\beta c_2(\alpha).
\end{split}
\end{equation}
To prove the first inequality in Part (e), we notice that 
\[
f(\alpha, c_1(\beta), \mathcal H_2(\beta))=c_1(\beta)[(1-\alpha)e^{r-a\mathcal H_2(\beta)-c_1(\beta)}+\alpha]=c_1(\beta)\ge \underline c_1(\alpha, \beta),
\]
the same comment refers to $g(c_2(\alpha),\beta, \mathcal H_1(\alpha))$, concluding the proof.


\subsection{Proof of Lemma \ref{lem:alphainvar}}
\label{subsec:Ap3}

{\bf (a)} If  $x\in [c_{1}(\beta), \mathcal H_{1}(\alpha)]$, $y\in (0, \mathcal H_{2}(\beta)]$, estimation \eqref{est:xbel} shows that 
$f(\alpha, x, y)\ge \underline c_{1}(\alpha, \beta)$. Similarly, if  $x\in (0, \mathcal H_{1}(\alpha)]$, $y\in (c_2(\alpha), \mathcal H_{2}(\beta)]$, inequalities \eqref{est:xbel} lead to 
$g(\beta, x, y)\ge \underline c_{2}(\alpha, \beta)$. Also, Lemma~\ref{lem:locmax}\,(c) shows that $f(\alpha, x, y) \leq  \mathcal H_{1}(\alpha)$, $g(\beta, x, y) \leq \mathcal H_{2}(\beta)$ when $x\in (0, \mathcal H_{1}(\alpha)]$, $y\in (0 , \mathcal H_{2}(\beta)]$.

Let now $x\in [\underline c_{1}(\alpha, \beta), c_{1}(\beta))$, $y\in (0, \mathcal H_{2}(\beta)]$, then 
\[
e^{r-x-ay}\ge e^{r-a\mathcal H_2(\beta)-x}=e^{c_{1}(\beta)-x}\ge 1, \quad f(\alpha, x, y)=x\left[(1-\alpha)e^{r-ay-x}+\alpha \right]> x\ge \underline c_{1}(\alpha, \beta).
\]
For $x\in [\underline c_{1}(\alpha, \beta)-h, c_{1}(\beta)]$, $y\in (0, \mathcal H_{2}(\beta)]$, we have
\[
e^{r-x-ay}\ge e^{c_{1}(\beta)-x}\ge e^h>1, \quad f(\alpha, x, y)=x\left[(1-\alpha)e^{r-ay-x}+\alpha \right]> x\ge \underline c_{1}(\alpha, \beta)-h.
\] 

Similarly, for $y\in [\underline c_{2}(\alpha, \beta), c_{2}(\alpha))$, $x\in (0, \mathcal H_{1}(\alpha)]$, we get
\[
e^{s-y-bx}\ge e^{s-b\mathcal H_1(\alpha)-y}=e^{c_{2}(\alpha)-x}\ge 1, \quad g(\beta, x, y)=y\left[(1-\beta)e^{s-bx-y}+\beta \right]> y\ge \underline c_{2}(\alpha, \beta),
\]
and for $y\in [\underline c_{2}(\alpha, \beta)-h, c_{2}(\alpha))$, $x\in (0, \mathcal H_{1}(\alpha)]$,
\[
e^{s-y-bx}\ge e^h>1,  \quad g(\beta, x, y)=y\left[(1-\beta)e^{s-bx-y}+\beta \right]> y\ge \underline c_{2}(\alpha, \beta)-h.
\]

{\bf (b)} 
Lemma~\ref{lem:alphainvar0} implies $D_{\tilde \alpha_1, \tilde \beta_1}\subseteq  D_{\tilde \alpha_2, \tilde \beta_2}$. By Part\,(a),  we have
\[
T_{\tilde \alpha_1, \tilde \beta_1}:D_{\tilde \alpha_1, \tilde \beta_1}\to D_{\tilde \alpha_1, \tilde \beta_1}\in D_{\tilde \alpha_2, \tilde \beta_2}.
\]
Consider  now $(x, y)\in D_{\tilde \alpha_2, \tilde \beta_2}\setminus D_{\tilde \alpha_1, \tilde \beta_1}$.

 If $x\in [\mathcal H_1(\tilde \alpha_1), \mathcal H_1(\tilde \alpha_2)]$, we can apply Lemma~\ref{lem:locmax}, Part\,(d) and conclude that $f(\alpha, x, y)\le \mathcal H_1(\tilde \alpha_2)$ for each  $\alpha\in (0,1)$, $y\in [0, e^{s-1}]$. Similarly,  when $y\in [\mathcal H_2(\tilde \beta_1), \mathcal H_2(\tilde \beta_2)]$, by Lemma~\ref{lem:locmax}, Part (4) we get $g(\beta, x, y)\le \mathcal H_2(\tilde \beta_2)$ for each  $\beta\in (0,1)$, $x\in [0, e^{r-1}]$.

Let now $x\in [\underline c_{1}(\tilde \alpha_2, \tilde \beta_2), \underline c_{1}(\tilde \alpha_1, \tilde \beta_1)) $, $y<\mathcal H_2(\tilde \beta_2)$. 
If $x<c_1(\tilde \beta_2)$  then  $e^{r-ay-x}\ge e^{c_1(\tilde \beta_2)-x}>1$, and, for each $\alpha\in (0,1)$,
\begin{align*}
f(\alpha, x, y) & =x[(1-\alpha)e^{r-ay-x}+\alpha]\ge x[(1-\alpha)e^{r-a\mathcal H_2(\tilde \beta_2)-x}+\alpha] \\ &  =x[(1-\alpha)e^{c_1(\tilde \beta_2)-x}+\alpha]
> x>\underline c_{1}(\tilde \alpha_2, \tilde \beta_2).
\end{align*}
 Let  $x\in [c_1(\tilde \beta_2), \mathcal H_1(\tilde \alpha_2)]$ and $y\le \mathcal H_2(\tilde \beta_2)$. We estimate, as above,
\[
f(\tilde \alpha_1, x, y)\ge x[(1-\tilde \alpha_1)e^{c_1(\tilde \beta_2)-x}+\tilde \alpha_1] =  x[e^{c_1(\tilde \beta_2)-x}+\tilde \alpha_1 (1-e^{c_1(\tilde \beta_2)-x})].
\]
Since $e^{c_1(\tilde \beta_2)-x}<1$, $\tilde \alpha_1>\tilde \alpha_2$, we get, by applying definition \eqref{def:mathcalH12},
\begin{align*}
x[e^{c_1(\tilde \beta_2)-x}+\tilde \alpha_1 (1-e^{c_1(\tilde \beta_2)-x})]  & >x[e^{c_1(\tilde \beta_2)-x}+\tilde \alpha_2 (1-e^{c_1(\tilde \beta_2)-x})] \\ & =x[(1-\tilde \alpha_2)e^{c_1(\tilde \beta_2)-x}+\tilde \alpha_2]\ge \underline c_1(\tilde \alpha_2, \tilde \beta_2).
\end{align*}
We deal with $g(\beta, x, y)$ in a similar way. 

\subsection{Proof of Lemma \ref{lem:Tninvar}}
\label{subsec:Ap4}

{\bf (a)} Since $\alpha_n>\underline \alpha>\tilde \alpha$, $\beta_n\ge \underline \beta>\tilde \beta$, for each $n\in \mathbb N$, by Lemma~\ref{lem:alphainvar} (b) we conclude  that $T_{\alpha_{n}, \beta_{n}}(x, y)\in D_{\underline \alpha, \underline\beta}$ as soon as $(x, y)\in D_{\underline \alpha, \underline\beta}$. 
Taking $(x_0, y_0)\in D_{\underline \alpha, \underline\beta}$, applying \eqref{def:Tn+1} and reasoning by induction conclude the proof.

The same argument can be applied in Part {\bf (b)}.

{\bf(c)}  Recall from \eqref{def:Dalphabeta} that $D_{\underline \alpha, \underline\beta}(h)=[\underline c_{1}(\underline\alpha, \underline\beta)-h, \mathcal H_{1}(\underline\alpha)]\times [\underline c_2(\underline\alpha-h, \underline\beta), \mathcal H_{2}(\underline\beta)]$. 
Assume $x_0>\mathcal H_1(\underline \alpha)$ and set 
$\displaystyle 
\Delta_1:= (1-\bar \alpha)\mathcal H_1(\underline \alpha)[ 1-e^{r-\mathcal H_1(\underline \alpha)}], $ $$\bar m_1:=\left\lfloor\frac{x_0-\mathcal H_1(\underline \alpha)}{\Delta_1} \right\rfloor +1, ~  m_1:=\inf\{i\le \bar m_1: x_i<\mathcal H_1(\underline \alpha)\}.
$$
Since $x_0>\mathcal H_1(\alpha_1)\ge r$,  we get $e^{r-\mathcal H_1(\underline \alpha)}< 1$, so $\Delta_1>0$. As $y_0>0$, we have
\[
\begin{split}
x_0-x_1 & = x_0-x_0[ (1-\alpha_1)e^{r-x_0-ay_0}+\alpha_1]=(1-\alpha_1)x_0[ 1-e^{r-x_0-ay_0}]\\&>(1-\bar \alpha)\mathcal H_1(\underline \alpha)[ 1-e^{r-\mathcal H_1(\underline \alpha)}]=\Delta_1,
\end{split}
\]
hence  $x_1<x_0-\Delta_1$. If $x_1>\mathcal H_1(\underline \alpha)$, we repeat this process, but not more than $\bar m_1$ times: $x_i<x_{i-1}-\Delta_1$, $i\le \bar m_1$.  

A similar argument can be applied to $y$ if $y_0>\mathcal H_2(\underline \beta)$,  with
\[
\Delta_2:= (1-\bar \beta)\mathcal H_2(\underline \beta)[ 1-e^{s-\mathcal H_2(\underline \beta)}], ~\bar m_2:=\frac{y_0-\mathcal H_1(\underline \beta)}{\Delta_2}, ~ m_2:=\inf\{i\le \bar m_2: y_i<\mathcal H_2(\underline \beta)\}.
\] 
Set
$$
\underline x_0:=\mathcal H_1(\underline \alpha)\left [(1-\underline \alpha)e^{r-x_0-ay_0}+\underline \alpha\right], \quad \Delta_{1,1}:=\underline x_0(1-\bar \alpha)(e^h-1),
$$ 
$$
 m_{1,1}:= \left\lfloor \frac{\underline c_1(\underline\alpha, \underline\beta)-h-\underline x_0}{\underline x_0(1-\bar \alpha)(e^h-1)} \right\rfloor +1.
$$
By Lemma~\ref{lem:locmax}\,(c),(d),  $x_n\le \mathcal H_{1}(\underline\alpha)$ for all $n\ge \bar m_1$, $y_n\le \mathcal H_{2}(\underline\beta)$ for all $n\ge \bar m_2$. Assume that $\bar m_1<\bar m_2$.

After $m_1$ steps  we have either  $x_{m_1}\in  [\underline c_{1}(\underline\alpha, \underline\beta)-h, \mathcal H_{1}(\underline\alpha)]$  or  $x_{m_1}<\underline c_1(\underline\alpha, \underline\beta)-h$. In the first case the solution stays there. Consider the second case:
 $x_{m_1}<\underline c_1(\underline\alpha, \underline\beta)-h$ and assume  that  $m_1<m_2$ when
 we also have $x_0>x_{m_1-1}>\mathcal H_1(\underline \alpha)$.
Then
\[
\begin{split}
x_{m_1}&=x_{m_1-1}\left [(1-\alpha_{m_1})e^{r-x_{m_1-1}-ay_{m_1-1}}+\alpha_{m_1}\right]\\&\ge \mathcal H_1(\underline \alpha)\left [(1-\underline \alpha)e^{r-x_0-ay_0}+\underline \alpha\right]  =:\underline x_0.
\end{split}
\]
Since $x_i\ge x_{i-1}$, as soon as $x_i<c_1(\underline \alpha)$, we get $x_i\ge \underline x_0$ for all $i\in \mathbb N$. Also, once $y_0\ge y_{m_1-1}\ge y_{m_1} >\mathcal H_2(\underline \beta)$, we have $y_{i}<\mathcal H_2(\underline\beta)$ for all $i\ge m_2$. Then, if $x_{m_2}<\underline c_1(\underline\alpha, \underline\beta)-h$, we get 
\[
\begin{split}
x_{m_2+1}-x_{m_2}&=x_{m_2}\left [(1-\alpha_{m_2+1})e^{r-x_{m_2}-ay_{m_2}}+\alpha_{m_2+1}\right]-x_{m_2}\\&
=x_{m_2}(1-\alpha_{m_2+1})\left [e^{r-x_{m_2}-ay_{m_2}}-1\right]\ge  \underline x_0(1-\bar \alpha)\left [e^{r-\underline c_1(\underline\alpha, \underline\beta)+h-a\mathcal H_2(\underline \beta)}\right],\\
&\ge \underline x_0(1-\bar \alpha)[e^h-1]:=\Delta_{1,1}
\end{split}
\]
and, for any $i$, as soon as $x_{m_2+i}<\underline c_1(\underline \alpha, \underline \beta)-h$,
\[
\begin{split}
x_{m_2+i+1}-x_{m_2+i}&=x_{m_2+i}\left [(1-\alpha_{m_2+i+1})e^{r-x_{m_2+i}-ay_{m_2+i}}+\alpha_{m_2+i+1}\right]-x_{m_2+i}\\&
\ge x_{m_2+i}(1-\bar \alpha)\left [e^{r-x_{m_2+i}-ay_{m_2+i}}-1\right]\ge x_{m_2+i}(1-\bar \alpha)\left [e^{r-\underline c_1(\underline\alpha, \underline\beta)+h-a\mathcal H_2(\underline \beta)}\right],\\
&= x_{m_2+i}(1-\bar \alpha)[e^h-1]\ge \underline x_0(1-\bar \alpha)[e^h-1].
\end{split}
\]

So, after at most $m_{1,1}$ steps,  the solution $x$ is in  $[\underline c_{1}(\underline\alpha, \underline\beta)-h, \mathcal H_{1}(\underline\alpha)]$.  
All the other cases when $x_0<\underline c_1(\underline \alpha, \underline \beta)$  and  $y_0\notin  [\underline c_2(\underline\alpha, \underline\beta), \mathcal H_{2}(\underline\beta)]$ are treated in the same way. We define $m_{2,2}$ similarly to $m_{1,1}$.
Therefore we can set the maximum number of steps $\tilde S$ as
\[
\tilde S=\max\{\bar m_1, \bar m_2\}+\max\{m_{1,1}, m_{2,2}\},
\]
which concludes the proof.

\subsection{Proof of Lemma \ref{lem:locstPBCpar}}
\label{subsec:locstPBCpar}

Part {\bf (a)} was fully justified above.

{\bf  (b) } We set, for $x, y>0$ , 
\begin{equation}
\label{def:G}
\begin{split}
 & G(x,y):=2-x-y+\frac{xy(1-ab)}2, \quad g(x) :=\frac{2-x}{1-\frac{1-ab}2x}, \,\, x\neq \frac2{1-ab},\\
 & \mathcal E:=\{(x, y): G(x,y)>0,\,  x+y<4, \, x,y>0\}.
  \end{split}
\end{equation}
Assume first that $x\in \left(0, \frac 2{1-ab}\right)$. Then  $G(x,y)>0$ if and only if 
\begin{equation*}
\label{def:g_function}
\begin{split}
y < g(x) &=\frac{2-x}{1-\frac{1-ab}2x}=\frac{x-2}{\frac{1-ab}2x-1}=\frac2{1-ab}\frac {x-2}{x-\frac2{1-ab}} \\ & =
\frac2{1-ab}\left[1+\frac {\frac2{1-ab}-2}{x-\frac2{1-ab}}\right]=\frac2{1-ab}+\frac {\frac{4ab}{(1-ab)^2}}{x-\frac2{1-ab}} \, .
 \end{split}
\end{equation*}
Note that $g$ defined in \eqref{def:G} decreases  everywhere in its domain, $x\neq \frac2{1-ab}$,
since $g^{\prime}(x)=-\frac {\frac{4ab}{(1-ab)^2}}{\left(x-\frac2{1-ab}\right)^2}<0$, and $g$  intersects the $y$-axis  at $y=2$ and the $x$-axis at $x=2$. 
Thus, the graph of $g$ on $[0,2]$ is  completely in the square $[0, 2]\times [0,2]$, therefore the inequality  $0<x+y<4$ holds.

When $x\in \left[2, \frac 2{1-ab}\right)$,  we get $y=g(x)<0$,  since
\[
x-\frac 2{1-ab}>2-\frac 2{1-ab}=-\frac {2ab}{1-ab}, \quad y<\frac 2{1-ab}-\frac{\frac {4ab}{(1-ab)^2}}{\frac {2ab}{1-ab}}=\frac {2ab}{1-ab}-\frac {2ab}{1-ab}=0,
\]
so $(x, y)\notin \mathcal E$ for any $y\in (0, g(x))$.

Assume now that $x>\frac2{1-ab}$, then $G(x,y)>0$ if and only if
\[
y>\frac2{1-ab}+\frac {\frac{4ab}{(1-ab)^2}}{x-\frac2{1-ab}}=g(x)>\frac2{1-ab},
\]
while  $x+y<4$ implies  
$y<4-x<\frac{2-4ab}{1-ab}$. This leads to the contradiction, since $\frac{2-4ab}{1-ab}<\frac2{1-ab}.$
Thus, we proved that the domain  $\mathcal E$ also has the form  $\mathcal E=\{( x, y):x\in\left(0, 2\right), y\in (0, g(x)\}$.

Now we substitute $x = p_\alpha=(1-\alpha)p$ and $y=q_\beta=(1-\beta)q$.  Then $(x, y)\in \mathcal E$ if and only if  
$\alpha>\alpha^*=\max\{1-2/p, 0\}$ and  
\[
(1-\beta)q<\frac2{1-ab}+\frac {\frac{4ab}{(1-ab)^2}}{(1-\alpha)p-\frac2{1-ab}},  \quad \mbox{or} \quad \beta>1-\frac2{q(1-ab)}-\frac {\frac{4ab}{(1-ab)^2}}{q[(1-\alpha)p-\frac2{1-ab}]},
\]
which proves the lemma.
\bigskip
\subsection{Proof of Lemma \ref{lem:locstabst1_var}}
\label{subsec:var}
{\bf (a)}
The statement is based on the fact that a constant fixed point $K$ of a non-autonomous system of nonlinear difference equations
is locally asymptotically stable if there is $\lambda \in (0,1)$ such that the norms of the Jacobians satisfy $\| J_n(K)\| \leq \lambda$, see \cite[Chapter 4.6]{Elaydi} for overview of the linearisation method.

{\bf (b)}
Recall that the Jacobian of the controlled system at each step, see \eqref{Jacobian_control} and \eqref{def:paqb}, $p_\alpha=(1-\alpha)p$, $q_\beta=(1-\beta)q$, is
\begin{equation}
\label{Jacobian_control_var}
J_{\alpha_n,\beta_n}  =  \left(\begin{array}{ll}
1-p_{\alpha_n}  &  -ap_{\alpha_n} \\ -bq_{\beta_n}& 1-q_{\beta_n}.
 \end{array}\right).
\end{equation}
Then the max-norm of the Jacobian $\|  J_{\alpha_n,\beta_n}  \|_{\infty}$  is
\[
\|  J_{\alpha_n,\beta_n}  \|_{\infty}=\max\{|1-p_{\alpha_n}|+ap_{\alpha_n}, \,\, |1-q_{\beta_n}|+bq_{\beta_n}\}.
\]
Let us estimate the first expression in the maximum. First, let $p_{\alpha_n} \leq 1$. Then, we have 
\[
0\le |1-p_{\alpha_n}|+ap_{\alpha_n}=1-(1-a)p_{\alpha_n}  <1, \quad \mbox{as} \quad (1-a)p_{\alpha_n}<p_{\alpha_n} \leq 1.
\]
To get a uniform estimate $1-(1-a)p_{\alpha_n}  \leq \lambda <1$, we need to have 
$p(1-\alpha_n) \geq \varepsilon$ for some $\varepsilon > 0$. This is guaranteed if
$\alpha_n \leq \alpha^* <1$, which is assumed in the conditions of the lemma. Thus, we should only consider $p_{\alpha_n} > 1$.

Next, for  $p_{\alpha_n} > 1$  we get
\[
0\le |1-p_{\alpha_n}|+ap_{\alpha_n}=p_{\alpha_n} - 1 + ap_{\alpha_n} = (1+a)p_{\alpha_n}-1\leq \lambda <1 \quad \mbox{if} \quad 1<p_{\alpha_n}<\frac{1+\lambda}{1+a},
\]
which corresponds to $1-\frac2{p(1+a)}+ \varepsilon \leq \alpha_n <1-1/p$ for some $\varepsilon > 0$. As mentioned above, the right inequality can be satisfied or not.
The left inequality is valid due to the choice of the lower bound $\alpha_*$ in \eqref{eq:choice_q_1}.
Similar estimations can be done for $ |1-q_{\beta_n}|+bq_{\beta_n}$.


\begin{remark}
\label{rem:othernorms}
In Lemma~\ref{lem:locstabst1_var} we applied the maximum norm to derive the low bound for the controls to ensure local stability for system \eqref{eq:RickPBCvar} with variable control.  We can use other  norms, such as the traffic or the spectral norm. 
\begin{enumerate}
\item [(a)]
For the traffic norm, the fact that the norm does not exceed $\lambda \in (0,1)$ is equivalent to
$$
|1-(1-\alpha_n) p| +  (1-\beta_n)bq \leq \lambda, \quad  |1-(1-\beta_n)q| + (1-\alpha_n) a p \leq \lambda.
$$
Some sufficient conditions on $\alpha_n$ can be deduced, once the bounds for $\beta_n\in [\beta_*,\beta^*]$  are known, or vice versa.

\item [(b)]
For the spectral norm, we have to evaluate when the modulus of each eigenvalue of 
\begin{align*}
&A_n:=J_{\alpha_n,\beta_n}^T J_{\alpha_n,\beta_n} \\ & =  \left(\begin{array}{ll}
(1-p_{\alpha_n})^2  + b^2  q_{\beta_n}^2  &  -ap_{\alpha_n} (1-p_{\alpha_n}) -  bq_{\beta_n} ( 1-q_{\beta_n}) \\ -
ap_{\alpha_n} (1-p_{\alpha_n}) -  bq_{\beta_n} ( 1-q_{\beta_n}) & (1-q_{\beta_n} )^2  + a^2  p_{\alpha_n}^2 
 \end{array}\right)
\end{align*}
is less than 1. We will illustrate that existence of $\varepsilon_1>0$, $\varepsilon_2>0$   such that 
\begin{equation*}
\label{eq:spectral_norm}
\begin{split}
&   (1-p_{\alpha_n})^2 + b^2  q_{\beta_n}^2 + (1- q_{\beta_n})^2 + a^2  p_{\alpha_n}^2  -  \left[  (1-p_{\alpha_n})   (1- q_{\beta_n}) -  a b  p_{\alpha_n}  q_{\beta_n} \right]^2   \leq  1 - \varepsilon_1,
\\
& \left|  (1-p_{\alpha_n})   (1- q_{\beta_n}) -  a b  p_{\alpha_n}  q_{\beta_n} \right|       \leq 1-\varepsilon_2
\end{split}
\end{equation*}
leads to the required estimate of the spectral norm. Obviously
{\rm tr}~$A_n=  (1-p_{\alpha_n})^2 + b^2  q_{\beta_n}^2 + (1- q_{\beta_n})^2 + a^2  p_{\alpha_n}^2 $,  and also
 ${\rm det~}A_n= \left[  (1-p_{\alpha_n})   (1- q_{\beta_n}) -  a b  p_{\alpha_n}  q_{\beta_n} \right]^2$, since
\begin{align*}
{\rm det~}A_n = &  \left[ (1-p_{\alpha_n})^2 + b^2  q_{\beta_n}^2   \right]  \left[  (1- q_{\beta_n})^2 + a^2  p_{\alpha_n}^2    \right]  -  \left[   a p_{\alpha_n}(1-p_{\alpha_n})  + bq_{\beta_n}(  1- q_{\beta_n} )      \right] ^2
\\
= & \left[  (1-p_{\alpha_n})   (1- q_{\beta_n}) -  a b  p_{\alpha_n}  q_{\beta_n} \right]^2.
\end{align*}
We use a slight modification of the well-known criterion and state that all the eigenvalues of $A_n$ do not exceed $\lambda$ for some $\lambda \in (0, 1)$, once there are $\varepsilon_1, \varepsilon_2\in (0,1)$  such that 
${\rm tr~}A_n - {\rm det~}A_n \leq 1 - \varepsilon_1,$    $ {\rm det~}A_n \leq1 - \varepsilon_2$,  $n \in {\mathbb N}$,
which leads to the sufficient condition above.
\end{enumerate}
Unlike the maximum-norm, these two conditions are harder to verify.
\end{remark}

\subsection{Proof of Lemma \ref{lem:XYxy}}
\label{subsec:XYxy}


{\bf (a) } We have 
\begin{equation*}
\begin{split}
&X=r-x-ay=r-(x-p)-p-a(y-q)-aq
=(p-x)+a(q-y),\\
&Y=(q-y)+b(p-x).
\end{split}
\end{equation*}
Since $X-aY = (1-ab) (p-x)$, $Y-bX = (1-ab) (q-y)$,   $0<a, b<1$, we get 
\[
|p-x|=\frac 1{1-ab}|X-aY|\le\frac 1{1-ab}(|X|+a|Y|)\le\frac 1{1-ab}(|X|+|Y|)
\]
and, similarly, $|q-y|\le \frac1{1-ab}(|X|+|Y|)$. Then,
\[
|p-x|^2+|q-y|^2\le \frac 2{(1-ab)^2}(|X|+|Y|)^2\le \frac 4{(1-ab)^2}(|X|^2+|Y|^2).
\]
So 
\[
|X|^2+|Y|^2\ge  \frac {(1-ab)^2}4 \left( |p-x|^2+|q-y|^2 \right) \ge  \frac {\delta_0^2(1-ab)^2}4=\rho_0^2.
\]
Since $|X|^2+|Y|^2<(|X|+|Y|)^2$, this implies ~$2\max\{|X|, |Y|\}\ge |X|+|Y|>\rho_0$, which completes the proof.

{\bf (b) }is straightforward, and we omit the proof.

{\bf (c)} Fix some $\rho>0$.  It was shown above in the proof of Lemma~\ref{lem:alphaLyap} that the continuous functions satisfy $\Psi(\alpha, X, r)>0$, $\Psi(\beta, Y, s)>0$ for $X, Y\neq 0$.
Since $1\notin [\rho_i, \bar \rho_i]$, $i=1,2$, and $0\notin [-H_1, -\rho]\cup [\rho, H_1]$,
the continuous positive functions $\Psi(\cdot, \cdot, r)$ and  $\Psi(\cdot, \cdot, s)$  in the compact sets 
$[\rho_1, \bar \rho_1]\times \left([-H_1, -\rho]\cup [\rho, H_1]\right)$ and $[\rho_2, \bar \rho_2]\times \left([-H_1, -\rho]\cup [\rho, H_1]\right)$, respectively,
attain their minimum values which are also positive. 
This allows us to set 
\begin{equation}
\begin{split}
\label{def:psi}
&\underline \psi_1(\rho):=\min\{\Psi(\alpha, u, r): (\alpha, u)\in [\rho_1,  \bar \rho_1]\times \left([-H_1, -\rho]\cup [\rho, H_1]\right)\},\\
&\underline \psi_2(\rho):=\min\{\Psi(\beta, u, s): (\beta, u)\in [\rho_2,  \bar \rho_2]\times \left([-H_1, -\rho]\cup [\rho, H_1]\right)\},\\
&\underline \psi(\rho):=\min\{\psi_1(\rho), \psi_2(\rho)\},
\end{split}
\end{equation}
justifying  {\bf (c)} and concluding the proof.

\bigskip

\subsection{Proof of Lemma \ref{lem:stepsconsta}}
\label{subsec:stepsconsta}
{\bf (a)}  Fix some $\delta_0>0$ and  find $\rho_0$ as in \eqref{def:rho}. Using \eqref{def:mathcalH12}, \eqref{def:psi}, \eqref{def:globeta}, \eqref{def:barunder}, we define 
\begin{equation}
\begin{split}
\label{def:steps}
& \mathbf c:=\min\{\underline c_i(\underline \alpha, \underline \beta), i=1,2\},\quad 
S:=  \left\lfloor \frac{2\bar M}{ \min\{b, a\}\mathbf c\underline\psi(\rho_0)}  \right\rfloor + 1,
\end{split}
\end{equation}
where $\lfloor t \rfloor$ is an integer part of $t$. By  \eqref{def:globeta} we have $|V(x_n, y_n)|\le \bar M$, $n\in {\mathbb N}$. 
We assume $(x_0, y_0)\in D_{\underline \alpha, \underline\beta}\setminus B(K, \delta_0)$, then also $|V(x_0, y_0)|\le \bar M$. 
For every $(x,y) \not\in B(K, \delta_0)$, the negative lower bound of $\Delta V(x,y)$ allows to evaluate the maximum number of steps outside this neighbourhood of $K$.

By Lemma~\ref{lem:Tninvar}, $(x_n, y_n)\in D_{\underline \alpha, \underline \beta}$ for all $n\in \mathbb N$, so, by definition \eqref{def:Dalphabeta} of $D_{\underline \alpha, \underline \beta}$, we get
$
|x_n|>c_1(\underline \alpha, \underline \beta)\ge \mathbf c$ and  $|y_n|>c_2(\underline \alpha, \underline \beta)\ge \mathbf c$.
Fix some $n\in \mathbb N$ and assume that $(x_i, y_i)\in D_{\underline \alpha, \underline\beta}\setminus B(K, \delta_0)$ for all $i < n$. In this case we have $|x_i-p|^2+|y_i-q^2|>\delta_0^2$ and then, by Lemma~\ref{lem:XYxy}\,(a), for $i < n$,  either $|X_i|\ge \rho_0$ or $|Y_i|\ge \rho_0.$ Then, by Lemma~\ref{lem:XYxy}\,(c), at least one of the following
inequalities  holds: 
\[
\Psi(\alpha, X_i, r)\ge \underline \psi(\rho_0) \quad \mbox{or} \quad \Psi(\alpha, Y_i, s)\ge \underline \psi(\rho_0).
\]
Now we apply inequality \eqref{ineq:DeltaV} with $x_i, y_i,X_i, Y_i$ replacing $x,y,X,Y$ and $\alpha_{i+1}, \beta_{i+1}$ instead of $\alpha, \beta$, respectively, 
and  Lemma~\ref{lem:XYxy}\,(c) to get
\begin{equation*}
\begin{split}
\Delta V_{\alpha_{i+1}, \beta_{i+1}}(x_{i}, y_{i})&=V(x_{i+1}, y_{i+1})-V(x_{i}, y_{i})\\&\le -bx_i\Psi(\alpha_{i+1}, X_i, r)-ay_i\Psi(\beta_{i+1}, Y_i, s)\le -\min\{a, b\}\mathbf c \psi(\rho_0).
\end{split}
\end{equation*}
Hence 
\begin{equation*}
\begin{split}
 V(x_{n+1}, y_{n+1})   & \le \sum_{i=0}^{n}\bigl[-bx_i\Psi(\alpha_{i+1}, X_i, r)-ay_i\Psi(\beta_{i+1}, Y_i, s)\bigr]+V(x_0, y_0)  \\
&\le -(n+1) \min\{a, b\}\mathbf c \psi(\rho_0)+V(x_0, y_0).
\end{split}
\end{equation*}
By \eqref{def:steps}, for any  $n\ge  S$ we get
$
(n+1)\mathbf c\min\{a,b\}\underline \psi(\rho_0)\ge S\mathbf c\min\{a,b\}\underline \psi(\rho_0)=2\bar M,
$
and therefore,
\[
V(x_{n}, y_{n})<-2\bar M+V(x_0, y_0) <-2\bar M+\bar M=-\bar M,
\]
which contradicts to  definition \eqref{def:globeta} of $\bar M$. Thus at some moment $n\le S$ it must be $(x_n, y_n)\in B(K, \delta_0)$, which proves  Part (a).

{\bf (b)} As $V$ does not increase along the solution, the steps of Part (a) justified that among all $(x_n,y_n)$, there is a total of less than $S$ terms of the sequence not belonging to $ B(K, \delta_0)$.

Part {\bf (c)}  follows from the fact that for any $\delta>0$, there are only a finite number of terms $(x_n, y_n)$  outside of the $\delta$-neighbourhood of $K$.


\bigskip


\subsection{Proof of Lemma \ref{lem:locKolmtauk}}
\label{subsec:locKolmtauk}

For simplicity of notations we set
\[
J(n):=J_{\alpha+\ell \xi_{k+n+1}, \beta+\bar\ell \chi_{k+n+1}}=J_{\alpha_n(k), \beta_n(k)}, \quad \hat G_{n}(\cdot):=\hat G(\alpha_{n}(k), \beta_{n}(k), \cdot),
\]
where $\hat G$ is defined as in \eqref{eq:RickPBCst1}.   From the first line of \eqref{def:ethamu}, we get , for  $k=0, 1, \dots, S$,
 \[
 \sup_{k\in {\mathbb N}, n\le \rm N}\left\{  \left\| \prod_{i=k}^{n+k} J(i)  \right\|  \right\}\le \mathbb J^{\rm N}.
 \]
 By \eqref{eq:RickPBCstk}, after $n$ iterations we arrive at 
 \begin{equation}
\begin{split}
\label{eq:maindecomp}
Z^{[k]}_{n+1}    &=J(n+1)Z^{[k]}_n+ \hat G_{n}(Z^{[k]}_n)\\&=J(n+1)J(n)Z^{[k]}_{n-1}+ J(n+1)\hat G_{n-1}(Z^{[k]}_{n-1})+ \hat G_{n}(Z^{[k]}_n)\\
&=\prod_{i=1}^{n+1} J (i)Z^{[k]}_0+\sum_{i=-1}^{n-1}\prod_{j=0}^i  J(n+1-j)\hat G(Z^{[k]}_{n- i -1}),
\end{split}
\end{equation}
where the product $\displaystyle \prod_{j=k}^p$ with $k > p$ is assumed to be equal to one.

To justify our result,  we are going to show that  $\|Z_n^{[k]}\|\le \eta e^{-\mu n}$, $n\in \mathbb N$. 
We implement the proof by induction in $n$. 

Let $n=1$, then the last  line in  \eqref{def:ethamu} implies 
$\delta_0<1$, $\delta_0( \mathbb J+\hat C \delta_0)e^{\mu} \leq \eta$, and then, putting $n=1$ in \eqref{eq:maindecomp}, 
on $\Lambda_{\gamma,k}$, defined as in \eqref{def:POgamk}, we get 
\[
\| Z_1^{[k]}  \| \le \| \tilde J(1) \|   \| Z_0^{[k]} \| + \| \hat G_0 (Z^{[k]}_0) \|
\leq \left[ \delta_0( \mathbb J+\hat C)e^{\mu} \right] e^{-\mu} \le \eta e^{-\mu}.
\]
For the induction step from $n\in {\mathbb N}$ to $n+1$, assume that $\|Z_s^{[k]}\|\le \eta e^{-\mu s}$ on $\Lambda_{\gamma, k}\cap \Omega_{\tau k}$, 
where $\Lambda_{\gamma, k}$,  $\Omega_{i k}$   are defined in \eqref{def:POgamk},  for all $s\le n<\rm N$. It means in particular that $\|Z_s^{[k]}\|\le \eta<\delta$ for all $s\le n<\rm N$. Applying the two last lines in \eqref{def:ethamu}, we get  from  \eqref{eq:maindecomp}
 \begin{equation*}
     \begin{split}
    &  \|Z_{n+1}^{[k]}\|\le \left\| \prod_{i =1}^{n+1}J(i)  \right\| \|Z_0^{[k]}\|+\sum_{i=-1}^{n-1} \left\|\prod_{j=0}^{i}J(n+1-j)
		\right\|  \hat C  \|  Z_{n- i -1}^{[k]}  \|^2   \\
    \le & e^{-\mu (n+1)} e^{\mu (n+1)} \mathbb J^{n+1}  \| Z^{[k]}_0 \| +e^{-\mu (n+1)} \sum_{ i =-1}^{n-1} e^{\mu (n+1)} \mathbb J^{i+1} \hat C \eta^2 e^{-2\mu(n- i -1)} \\
 \\ \le  &  \eta e^{-\mu (n+1)}\left[ \eta^{-1} e^{\rm N+1}\mathbb J^{\rm N+1}  \delta_0+  \eta \hat C\mathbb J^{\rm N+1}\sum_{i =-1}^{\rm \rm N-1} e^{3\mu-\mu n+2\mu  i}\right]\le  \frac 23\eta e^{-\mu (n+1)}<\eta e^{-\mu (n+1)}.
        \end{split}
     \end{equation*}  
 Now we show that $\| Z^{[k]}_j \|\le \eta e^{-\mu j}$ on  $\Lambda_{\gamma, k}\cap \Omega_{i k}$, 
for all $i \le n$ and $n\ge \rm N$. 
Splitting  the second term in the second line of \eqref{eq:maindecomp} 
 into two parts and estimating them separately, we get
		 \begin{equation*}
     \begin{split}
& 
\left\| \sum_{i =-1}^{\rm N -1} \prod_{j=0}^{i} J(n+1-j)\hat G(Z^{[k]}_{n- i -1})+\sum_{i =\rm N}^{n-1}\prod_{j=0}^{i}J(n+1-j)\hat G(Z^{[k]}_{n- i -1}) \right\|\\
 \le  & \sum_{ i =-1}^{\rm \rm N-1} 
		\left\|\prod_{j=0}^{i} J(n+1-j)\right\| \hat C \| Z^{[k]}_{n- i -1} \|^2+\sum_{i =\rm \rm N}^{n-1} \left\|\prod_{j=0}^{i }J(n+1-j) \right\| \hat C \|Z^{[k]}_{n- i -1}\|^2   = : \mathcal A_1+\mathcal A_2.
  \end{split}
     \end{equation*} 

     To estimate $\mathcal A_1$, we act as in the case $n<\rm N$, applying the inequality 
		$$ \displaystyle \eta\le \frac 13 \left(\hat C \mathbb J^{\rm N+1}\sum_{i  =-1}^{\rm \rm N-1} e^{3\mu+2\mu  i}\right)^{-1}$$ and getting
  \begin{equation*}
     \begin{split}
 \mathcal A_1 & \le \eta e^{-\mu (n+1)}  \hat C \eta\mathbb J^{\rm N+1}\sum_{i =-1}^{\rm \rm N-1} e^{\mu (n+1)} e^{-2\mu(n- i -1)}\le \eta e^{-\mu (n+1)}  \hat C \eta\mathbb J^{\rm N+1}\sum_{i =-1}^{\rm \rm N-1} e^{3\mu-\mu n+2\mu i}\\
  &\le \eta e^{-\mu (n+1)} e^{-\mu n} \eta \left[ \hat C \mathbb J^{\rm N+1}\sum_{i =-1}^{\rm \rm N-1} e^{3\mu+2\mu i}\right]<\frac 13 \eta e^{-\mu(n+1) }.
  \end{split}
     \end{equation*} 
To estimate $\mathcal A_2$,  we apply \eqref{def:ethamu} and \eqref{est:Jprod}   to obtain
\begin{equation*}
     \begin{split}
  \mathcal A_2  & \le \sum_{i =\rm \rm N}^{n-1} e^{-  i \nu/2}\hat C\eta^2 e^{-2\mu(n- i -1)}  \le \eta e^{-\mu(n+1)}\hat C\eta\sum_{i = \rm \rm N}^{n-1}  e^{\mu(n+1)}e^{- i \nu/2}e^{-2\mu(n- i -1) }\\
  &\le \eta e^{-\mu(n+1)} \eta  \hat C e^{3\mu} e^{-\mu n}\sum_{ i= \rm \rm N}^{n-1} e^{\mu i} e^{-\tau (\nu/2-\mu)}\le \eta e^{-\mu(n+1)} \eta \mathbb M<\frac 13 \eta e^{-\mu(n+1) }.
     \end{split}
     \end{equation*} 
  Now we proceed to the  estimation of the first term in the second line of  \eqref{eq:maindecomp}.  
By the last line in \eqref{def:ethamu}, we get  $\delta_0<\left( \mathbb J+\hat C\right)^{-1}\eta e^{-\mu}<\eta$. Applying  \eqref{est:Jprod} and since $\mu\le \nu/2-\frac {\ln 3}{\rm N +1}$, we get
 \begin{equation*}
     \begin{split}
 \left\| \prod_{i =1}^{n+1}J(i) Z^{[k]}_0 \right\|  \le \left\| \prod_{i=1}^{n+1}J(i) \right\| |Z^{[k]}_0|\le  \delta_0 e^{-\nu(n+1)/2}\le  &\eta e^{-\nu(n+1)/2}<\eta e^{-\mu(n+1)-\frac {\ln 3}{\rm N +1}(n+1)}\\  < & \eta e^{-\mu(n+1) - \ln 3} =\frac 13 \eta e^{-\mu(n+1) }.
    \end{split}
     \end{equation*} 
 So $\|Z^{[k]}_{n+1}\|\le \eta e^{-\mu (n+1)}$ on $\Lambda_{\gamma, k}\cap \Omega_{\tau k}$, concluding the induction step and the proof for all $n\in \mathbb N$. 

\bigskip

\subsection{About $\mathcal A$ and $\mathcal B$ for equal controls}
\label{subsec:mathcalAB}

Assume that $a=b$, $q=(1-\varepsilon)p$, for some $\varepsilon\in (0, 1)$.  Then
\[
\frac{4}{p+q +\sqrt{(p-q)^2+4abpq}}>\frac{2}{p(1+a)} \, ,
\]
since
\[
2p+2pa>2p-p\varepsilon+\sqrt{\varepsilon^2p^2+4a^2p^2(1-\varepsilon)}, \quad  p(2a+\varepsilon)>\sqrt{\varepsilon^2p^2+4a^2p^2-\varepsilon4a^2p^2}, \quad 
\]
\[
4a^2+4\varepsilon a+\varepsilon^2>\varepsilon^2+4a^2-\varepsilon4a^2,\quad 1>-a,
\]
which is true. So in this case $\mathcal A<\mathcal B$, the low estimate obtained by \eqref{def:mathcalA} is better than the estimate obtained by \eqref{def:mathcalB}. 

 If, however, $a=0.5$, $b=0.7$, $q=0.9p$, then we have
\[
2p(1+a)=1.33 p<p+q +\sqrt{(p-q)^2+4abpq}=1.9p+\sqrt{p^2\times 0.01+4\times 0.5\times 0.7\times  0.9 p^2} \, , 
\]
since $1.33<3.026,$ so in this case $\mathcal A>\mathcal B$.



\end{document}